\let\save@mathaccent\mathaccent
\newcommand*\if@single[3]{%
  \setbox0\hbox{${\mathaccent"0362{#1}}^H$}%
  \setbox2\hbox{${\mathaccent"0362{\kern0pt#1}}^H$}%
  \ifdim\ht0=\ht2 #3\else #2\fi
  }
\newcommand*\rel@kern[1]{\kern#1\dimexpr\macc@kerna}
\newcommand*\widebar[1]{\@ifnextchar^{{\wide@bar{#1}{0}}}{\wide@bar{#1}{1}}}
\newcommand*\wide@bar[2]{\if@single{#1}{\wide@bar@{#1}{#2}{1}}{\wide@bar@{#1}{#2}{2}}}
\newcommand*\wide@bar@[3]{%
  \begingroup
  \def\mathaccent##1##2{%
    \let\mathaccent\save@mathaccent
    \if#32 \let\macc@nucleus\first@char \fi
    \setbox\z@\hbox{$\macc@style{\macc@nucleus}_{}$}%
    \setbox\tw@\hbox{$\macc@style{\macc@nucleus}{}_{}$}%
    \dimen@\wd\tw@
    \advance\dimen@-\wd\z@
    \divide\dimen@ 3
    \@tempdima\wd\tw@
    \advance\@tempdima-\scriptspace
    \divide\@tempdima 10
    \advance\dimen@-\@tempdima
    \ifdim\dimen@>\z@ \dimen@0pt\fi
    \rel@kern{0.6}\kern-\dimen@
    \if#31
      \overline{\rel@kern{-0.6}\kern\dimen@\macc@nucleus\rel@kern{0.4}\kern\dimen@}%
      \advance\dimen@0.4\dimexpr\macc@kerna
      \let\final@kern#2%
      \ifdim\dimen@<\z@ \let\final@kern1\fi
      \if\final@kern1 \kern-\dimen@\fi
    \else
      \overline{\rel@kern{-0.6}\kern\dimen@#1}%
    \fi
  }%
  \macc@depth\@ne
  \let\math@bgroup\@empty \let\math@egroup\macc@set@skewchar
  \mathsurround\z@ \frozen@everymath{\mathgroup\macc@group\relax}%
  \macc@set@skewchar\relax
  \let\mathaccentV\macc@nested@a
  \if#31
    \macc@nested@a\relax111{#1}%
  \else
    \def\gobble@till@marker##1\endmarker{}%
    \futurelet\first@char\gobble@till@marker#1\endmarker
    \ifcat\noexpand\first@char A\else
      \def\first@char{}%
    \fi
    \macc@nested@a\relax111{\first@char}%
  \fi
  \endgroup
}
\newtheorem{theorem}{Theorem}[section]
\newtheorem{lemma}[theorem]{Lemma}
\newtheorem{corollary}[theorem]{Corollary}
\newtheorem{remark}[theorem]{Remark}
\newtheorem{definition}[theorem]{Definition}
\newtheorem{example}[theorem]{Example}
\newcommand{\rng}[1]{\mathcal{C}(#1)}
\newcommand{\nullsp}[1]{\mathcal{N}(#1)}
\newcommand{\pinv}[1]{{#1}^+\!}
\newcommand{\minv}[2]{{#1}\{{#2}\}\!}
\DeclarePairedDelimiter\brac{(}{)}
\DeclarePairedDelimiter\bracc{\{}{\}}
\newcommand{\bK}{\mathbb{K}}
\newcommand{\bC}{\mathbb{C}}
\newcommand{\bR}{\mathbb{R}}
\newcommand{\cN}{\mathcal{N}}
\newcommand{\diag}{\operatorname{diag}}
\newcommand{\linsp}{\operatorname{span}}
\numberwithin{equation}{section}
\title{Characterization of Matrices Satisfying the Reverse Order Law for the Moore--Penrose Pseudoinverse.}
\author{
  Oskar Kędzierski\thanks{the author also works at the Faculty of Mathematics, Informatics, and Mechanics of the University of Warsaw, \texttt{oskar@mimuw.edu.pl}}\\
  NASK National Research Institute\\
  Warsaw, Poland\\
  \texttt{oskar.kedzierski@nask.pl}\\
}
\begin{document}


\maketitle

\begin{abstract}

We give a constructive characterization of matrices satisfying the reverse-order law for the Moore--Penrose pseudoinverse. In particular, for a given matrix $A$ we construct another matrix $B$, of arbitrary compatible size and chosen rank, in terms of the right singular vectors of $A$, such that the reverse order law for $AB$ is satisfied. Moreover, we show that any matrix satisfying this law comes from a similar construction. 
As a consequence, several equivalent conditions to $\pinv{B}\pinv{A}$ being a pseudoinverse of $AB$ are given, for example $\rng{A^*AB}=\rng{BB^*A^*}$ or $B\pinv{\brac{AB}}A$ being an orthogonal projection.
In addition, we parameterize all possible SVD decompositions of a fixed matrix and give Greville--like equivalent conditions for $\pinv{B}\pinv{A}$ being a $\{1,2\}$-,$\{1,2,3\}$- and $\{1,2,4\}$-inverse of $AB$ with a geometric insight in terms of the principal angles between $\rng{A^*}$ and $\rng{B}$. 

\end{abstract}

\section{Introduction}\label{sec:intro}
Penrose~\cite{Penrose_1955} has proven that for any matrix $A\in\bK^{m\times n}$, where $\bK=\bR$ or $\bC$, there exists a unique matrix $X\in\bK^{n\times m}$ such that

\begin{enumerate}[i)]
    \item 
    \begin{equation}\label{eq:Penrose_cond1} AXA=A, \end{equation}
    \item 
    \begin{equation}\label{eq:Penrose_cond2} XAX=X, \end{equation}
    \item 
    \begin{equation}\label{eq:Penrose_cond3} (AX)^*=AX, \end{equation}
    \item 
    \begin{equation}\label{eq:Penrose_cond4} (XA)^*=XA. \end{equation}
\end{enumerate}

Such a matrix is an example of a generalized inverse and is denoted $X=\pinv{A}$ and referred to usually as the Moore--Penrose pseudoinverse or just pseudoinverse. It shares some properties of the inverse for the non--singular square matrix. However, the reverse order law, i.e., $\pinv{\brac*{AB}}=\pinv{B}\pinv{A}$ does not hold for arbitrary matrices, for example
\[A=\left[\begin{matrix}1 & 1\end{matrix}\right],\quad B=\left[\begin{matrix}1\\0\end{matrix}\right],\quad \pinv{\brac*{AB}}=1,\quad \pinv{B}\pinv{A}=\frac{1}{2}.\]

It is well--known that the reverse order law holds in the following cases:

\begin{enumerate}[i)]
    \item $A^*A$ commutes with $BB^*$,
    \item $A$ is of full column rank and $B$ is of full row rank,
    \item $\rng{A^*}=\rng{B}$.
\end{enumerate}
Note that condition i) is satisfied, for example, when $A^*A=I$ (matrix $A$ has orthonormal columns) or $BB^*=I$ (matrix $B$ has orthonormal rows) or $AB=0$.
Neither of conditions $i)- iii)$ is necessary, which follows from the following counterexample.

\[A=\left[\begin{array}{cccc} 1 & 0 & 0 & 0\\ 0 & 2 & 0 & 0\\ 0 & 0 & 1 & 0 \end{array}\right],\quad B=\left[\begin{array}{cccc} 2 & 4 & 3 & 2\\ 2 & 4 & 1 & 2\\ 0 & 0 & 0 & 0\\ 0 & 0 & 0 & 0 \end{array}\right],\quad \pinv{\brac*{AB}}=\pinv{B}\pinv{A}=\frac{1}{48}\left[\begin{array}{rrr} -2 & 3 & 0\\ -4 & 6 & 0\\ 24 & -12 & 0\\ -2 & 3 & 0 \end{array}\right],\]
\[\rank A=3,\quad \rank B=2,\quad \left[A^*A,BB^*\right]=81\left[\begin{array}{rrrr} 0 & -1 & 0 & 0\\ 1 & 0 & 0 & 0\\ 0 & 0 & 0 & 0\\ 0 & 0 & 0 & 0 \end{array}\right],\quad AB\neq 0. \]





The sufficient and necessary conditions were given by Greville, see~\cite{Greville_note} and \cite[p.~160, Ex.~22]{BenGreville}

\begin{theorem}[Greville]\label{thm:greville_conds}
The following conditions are equivalent.
\begin{enumerate}[i)]
    \item $\pinv{\brac*{AB}}=\pinv{B}\pinv{A}$,
    \item $\rng{A^*AB}\subset\rng{B}$ and $\rng{BB^*A^*}\subset\rng{A^*}$,
    \item $\pinv{A}ABB^*A^*AB\pinv{B}=BB^*A^*A$.
\end{enumerate}
    
\end{theorem}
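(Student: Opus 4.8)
The plan is to invoke uniqueness of the Moore--Penrose inverse: by Penrose's theorem, condition i) holds if and only if $X:=\pinv{B}\pinv{A}$ satisfies the four equations \eqref{eq:Penrose_cond1}--\eqref{eq:Penrose_cond4} with $A$ replaced by $M:=AB$ and $X$ in place of $\pinv{M}$. I will also exploit the involution $(A,B)\mapsto(B^*,A^*)$: since $\pinv{(N^*)}=(\pinv{N})^*$, this map carries i) to its adjoint, interchanges the two inclusions of ii), fixes iii) up to an adjoint, and swaps \eqref{eq:Penrose_cond3} with \eqref{eq:Penrose_cond4}, so it suffices to argue up to this symmetry. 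Throughout I write $P:=\pinv{A}A$ and $Q:=B\pinv{B}$ for the orthogonal projectors onto $\rng{A^*}$ and $\rng{B}$, so that the inclusions of ii) read $Q\,A^*AB=A^*AB$ and $P\,BB^*A^*=BB^*A^*$.

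I would first settle $ii)\Leftrightarrow iii)$, which is purely about ranges. Writing the left side of iii) as $P\,BB^*A^*A\,Q$ and setting $N:=BB^*A^*A$, I record the identities $\rng{N}=\rng{BB^*A^*}$ and $\rng{N^*}=\rng{A^*AB}$; these follow from $A^*A\pinv{A}=A^*$ (a consequence of $A\pinv{A}A=A$ and the self-adjointness of $A\pinv{A}$), from its $B$-analogue, and from the elementary fact $\rng{MCC^*}=\rng{MC}$. Given these, iii) reads $N=PNQ$ and forces $\rng{N}\subseteq\rng{P}=\rng{A^*}$ together with $\rng{N^*}\subseteq\rng{Q}=\rng{B}$, i.e. exactly the two inclusions of ii); conversely those inclusions give $PN=N$ and $NQ=N$, whence $PNQ=N$. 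This half is robust precisely because it never leaves the world of ranges.

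For $ii)\Rightarrow i)$ I would verify \eqref{eq:Penrose_cond1}--\eqref{eq:Penrose_cond4} for $X=\pinv{B}\pinv{A}$, $M=AB$ from the two projector identities, repeatedly using $A\pinv{A}A=A$, $\pinv{A}A\pinv{A}=\pinv{A}$ and their $B$-analogues; by the symmetry above only \eqref{eq:Penrose_cond1}, \eqref{eq:Penrose_cond2} and one Hermitian condition are independent. This is where the main obstacle sits. A degenerate example, one in which $A^*A$ and $BB^*$ never interact so that all relevant products are scalars, shows that \eqref{eq:Penrose_cond3} and \eqref{eq:Penrose_cond4} can hold while ii) fails; hence the four Penrose equations do \emph{not} correspond one-to-one with the two inclusions, and I cannot discharge them by a term-by-term cancellation. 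Both inclusions must be used at once: establishing \eqref{eq:Penrose_cond1}, which amounts to $A\,QP\,B=AB$, genuinely needs more than either identity in isolation. The clean route is to pass to the normal form in which a single unitary puts $P$ and $Q$ into simultaneous block-diagonal shape built from $\rng{A^*}\cap\rng{B}$ and the principal angles between $\rng{A^*}$ and $\rng{B}$ (the SVD data the paper develops later); there $QP$ and $PQ$ decouple and the products collapse block-by-block to the required equalities.

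Finally, $i)\Rightarrow ii)$ reads the inclusions off the genuine pseudoinverse. Assuming i), the matrices $M\pinv{M}=AB\pinv{B}\pinv{A}$ and $\pinv{M}M=\pinv{B}\pinv{A}AB$ are the orthogonal projectors onto $\rng{AB}$ and $\rng{(AB)^*}=\rng{B^*A^*}$; feeding their symmetry and idempotence, together with $\pinv{M}MM^*=M^*$ (so that $\pinv{B}\pinv{A}ABB^*A^*=B^*A^*$), into the same block decomposition is seen to force $\rng{BB^*A^*}\subseteq\rng{A^*}$, and the involution $(A,B)\mapsto(B^*,A^*)$ then yields the companion inclusion $\rng{A^*AB}\subseteq\rng{B}$. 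Combined with the clean $ii)\Leftrightarrow iii)$ above, this closes the cycle $i)\Leftrightarrow ii)\Leftrightarrow iii)$.
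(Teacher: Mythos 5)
The paper never proves this theorem --- it is quoted from Greville~\cite{Greville_note} and \cite[p.~160, Ex.~22]{BenGreville}, and the rest of the paper builds on it --- so your proposal has to stand entirely on its own. Part of it does stand: your proof of $ii)\Leftrightarrow iii)$ is correct and complete. With $P=\pinv{A}A$, $Q=B\pinv{B}$, $N=BB^*A^*A$, the identities $\rng{N}=\rng{BB^*A^*}$ and $\rng{N^*}=\rng{A^*AB}$ hold for exactly the reasons you give, and $PNQ=N$ holds iff $PN=N$ and $NQ=N$, iff both inclusions of ii) hold. The bookkeeping for the involution $(A,B)\mapsto(B^*,A^*)$ is also accurate.

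The gap is that $i)\Leftrightarrow ii)$ --- the actual content of Greville's theorem --- is never proved, and the normal-form route you sketch for $ii)\Rightarrow i)$ fails concretely. First, in the principal-angle normal form of a general pair of orthogonal projections, $PQ$ and $QP$ do \emph{not} decouple; they decouple precisely when every angle lies in $\bracc*{0,\frac{\pi}{2}}$, i.e.\ precisely when $PQ=QP$, and deducing that from ii) is part of what must be proved. (It is in fact the easy part: either inclusion alone forces one projector's range to split along the other's range--kernel decomposition, e.g.\ $\rng{B}=\brac*{\rng{B}\cap\rng{A^*}}\oplus\brac*{\rng{B}\cap\nullsp{A}}$, hence $PQ=QP$, hence \eqref{eq:Penrose_cond1} and \eqref{eq:Penrose_cond2}; so, contrary to your remark, \eqref{eq:Penrose_cond1} does not need both inclusions.) Second, and more seriously, \eqref{eq:Penrose_cond3} and \eqref{eq:Penrose_cond4} are not functions of the pair $(P,Q)$ at all, so no block-collapsing of the two projectors can establish them: in the paper's example following Theorem~\ref{thm:geo_suff} the principal angles are $0,\frac{\pi}{2}$, so $P$ and $Q$ decouple perfectly, yet $\pinv{B}\pinv{A}\notin\minv{(AB)}{3}\cup\minv{(AB)}{4}$; your ``collapse block-by-block'' step would prove ROL there, which is false. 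The missing ingredient is spectral, not geometric: from $\rng{A^*AB}\subset\rng{B}$ the subspace $W=\rng{A^*}\cap\rng{B}$ is $A^*A$-invariant, hence has an orthonormal basis $w_1,\ldots,w_s$ of eigenvectors of $A^*A$ with eigenvalues $\lambda_i>0$, while the other inclusion gives the splitting $\rng{B}=W\oplus\brac*{\rng{B}\cap\nullsp{A}}$; since $A$ vanishes on $\rng{B}\cap\nullsp{A}$ and $(\pinv{A})^*w_i=\lambda_i^{-1}Aw_i$, one gets $AB\pinv{B}\pinv{A}=AQ\pinv{A}=\sum_i\lambda_i^{-1}\brac*{Aw_i}\brac*{Aw_i}^*$, which is visibly Hermitian --- that is \eqref{eq:Penrose_cond3} --- and \eqref{eq:Penrose_cond4} follows by your involution. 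Likewise, in $i)\Rightarrow ii)$ the phrase ``is seen to force'' conceals the one step that matters: from $\pinv{M}MM^*=M^*$, left multiplication by $B$ gives $QP\brac*{BB^*A^*}=BB^*A^*$, and you then need the fixed-point property of products of orthogonal projections (the paper's Lemma~\ref{lem:prod_of_proj}: $QPv=v$ iff $v\in\rng{P}\cap\rng{Q}$) to conclude $\rng{BB^*A^*}\subset\rng{A^*}$; no block decomposition enters at all.
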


There is no straightforward way of constructing a concrete example of matrices with given ranks and sizes satisfying the reverse order law from the Greville's conditions.

In the following paper, we give a sufficient condition (Theorem~\ref{thm:sufficient}) and a related necessary condition (Theorem~\ref{thm:necessary})which allow, for a fixed matrix $A$, to explicitly construct all matrices $B$ satisfying the reverse order law, or vice versa. This is done in terms of the singular value decompositions of matrices $A$ and $B$.

This generalizes the result of Schwerdtfeger~\cite[p.~326]{Schwerdtfeger} stated only for matrices of equal ranks and only as a sufficient condition. We give independent proofs and geometric interpretations to already known conditions for $\pinv{B}\pinv{A}$ being $\{1,2\}$-inverse of $AB$, see Tian~[Theorem~9.1]\cite{Tian512} and for the reverse order law, see Tian~[Theorem~11.1]\cite{Tian512}. In particular, we show that the following conditions are equivalent.

\begin{theorem}
Let $A\in\bK^{m\times n},B\in\bK^{n\times k}$. 
    \begin{enumerate}[i)]
    \item $\pinv{\brac*{AB}}=\pinv{B}\pinv{A}$,
    \item there exist essential reduced SVD decompositions $A=U_A\Sigma_A V_A^*$ and $B=U_B\Sigma_B V_B^*$ such that
    \[V_A^*U_B=\left[\begin{array}{cc}
            Q & 0\\
            0 & 0\\
        \end{array}\right],\]
        where $Q$ is an unitary matrix or $V_A^*U_B=0$,     
        \item $\rng{A^*AB}=\rng{BB^*A}$,
        \item $\pinv{\brac*{\brac*{A^*A}\brac*{BB^*}}}=\pinv{\brac*{BB^*}}\pinv{\brac*{A^*A}}$,
    \item $\pinv{A}A$ commutes with $BB^*$ and $B\pinv{B}$ commutes with $A^*A$.
\end{enumerate}
\end{theorem}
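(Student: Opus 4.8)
The plan is to take condition (v) as the hub and connect every other condition to it, using Greville's Theorem~\ref{thm:greville_conds} together with one elementary lemma: if $M$ is Hermitian and $P$ is the orthogonal projection onto a subspace $\cV$, then $MP=PM$ if and only if $M\cV\subseteq\cV$ (for the forward direction apply $P$ to $Mv$ with $v\in\cV$; for the converse note that a Hermitian $M$ preserving $\cV$ also preserves $\cV^\perp$). I will also use the corollary that a positive semidefinite $M$ and its square $M^2$ have the same invariant subspaces, since $M$ is a norm-limit of polynomials in $M^2$. Recalling that $\pinv{A}A$ and $B\pinv{B}$ are the orthogonal projections onto $\rng{A^*}$ and $\rng{B}$, the lemma converts each commutation in (v) into an invariance: $B\pinv{B}$ commutes with $A^*A$ iff $\rng{B}$ is $A^*A$-invariant, and $\pinv{A}A$ commutes with $BB^*$ iff $\rng{A^*}$ is $BB^*$-invariant.

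With this dictionary, (i)$\iff$(v) is immediate from Greville: since $\rng{A^*AB}$ is the image of $\rng{B}$ under $A^*A$ and $\rng{BB^*A^*}$ the image of $\rng{A^*}$ under $BB^*$, condition (ii) of Theorem~\ref{thm:greville_conds} says exactly that $\rng{B}$ is $A^*A$-invariant and $\rng{A^*}$ is $BB^*$-invariant, i.e.\ precisely (v). For (iv)$\iff$(v) I would apply Theorem~\ref{thm:greville_conds} to the Hermitian pair $(A^*A,BB^*)$: its condition (ii) becomes $\rng{B}$ invariant under $(A^*A)^2$ and $\rng{A^*}$ invariant under $(BB^*)^2$, which by the square-root corollary is again (v). (Here I read (iii) as $\rng{A^*AB}=\rng{BB^*A^*}$, as in the abstract, the only size-compatible reading.)

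The substantive part is (ii)$\iff$(v), and this is where I expect the main obstacle. The point that is easy to miss is that in an essential reduced SVD the columns of $V_A$ are not an arbitrary orthonormal basis of $\rng{A^*}$ but eigenvectors of $A^*A$, and likewise those of $U_B$ are eigenvectors of $BB^*$; this is exactly what makes (ii) strictly stronger than asking the two projections to commute. For (v)$\Rightarrow$(ii): since $B\pinv{B}$ commutes with $A^*A$, the projection onto $\rng{B}$ preserves every eigenspace of $A^*A$, so I can pick an eigenbasis of $A^*A$ spanning $\rng{A^*}$ in which each vector lies in $\rng{B}$ or in $\rng{B}^\perp$; symmetrically I pick $U_B$ spanning $\rng{B}$ with each vector in $\rng{A^*}$ or $\rng{A^*}^\perp$. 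Ordering the common part first, orthogonality kills the off-diagonal and lower-right blocks of $V_A^*U_B$, and the top-left block is the unitary change of basis $Q$ between two orthonormal bases of $W:=\rng{A^*}\cap\rng{B}$, giving the stated form (or $V_A^*U_B=0$ when $W=0$). For (ii)$\Rightarrow$(v): the block form forces $W=\rng{V_A^{(1)}}=\rng{U_B^{(1)}}$ to be spanned by eigenvectors of $A^*A$ and also by eigenvectors of $BB^*$, hence invariant under both, while the remaining columns satisfy $\rng{V_A^{(2)}}\perp\rng{B}$ and $\rng{U_B^{(2)}}\perp\rng{A^*}$; splitting $\rng{B}=W\oplus\rng{U_B^{(2)}}$ and $\rng{A^*}=W\oplus\rng{V_A^{(2)}}$ then yields the two invariances of (v).

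It remains to fold (iii) into the cycle, which I would do as (v)$\Rightarrow$(iii)$\Rightarrow$(ii). For (v)$\Rightarrow$(iii): commutation gives the orthogonal splittings $\rng{B}=W\oplus(\rng{B}\cap\rng{A^*}^\perp)$ and $\rng{A^*}=W\oplus(\rng{A^*}\cap\rng{B}^\perp)$; since $A^*A$ annihilates $\rng{A^*}^\perp$, is injective on $\rng{A^*}$, and maps $W$ into $W$, one gets $\rng{A^*AB}=W$, and symmetrically $\rng{BB^*A^*}=W$, so the two ranges coincide. For (iii)$\Rightarrow$(ii): one always has $\rng{A^*AB}\subseteq\rng{A^*}$, $\rng{BB^*A^*}\subseteq\rng{B}$ and $\operatorname{rank}(A^*AB)=\operatorname{rank}(BB^*A^*)=\operatorname{rank}(AB)=:r$ (equal null spaces), and moreover $\dim W\le r$ unconditionally, because $A^*A$ is injective on $\rng{A^*}\supseteq W$ while $A^*A\,W\subseteq\rng{A^*AB}$. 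Equality in (iii) puts the common range inside $W$, forcing $\dim W=r$; hence all $r$ nonzero singular values of the cross-Gram matrix $V_A^*U_B$ (the cosines of the principal angles between $\rng{A^*}$ and $\rng{B}$) equal $1$, which is exactly the block form of (ii). The crux throughout is the eigenvector-aware reading of (ii); once that is secured, the remaining implications are bookkeeping with ranges and dimensions.
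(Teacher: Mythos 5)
Your overall architecture is sound and genuinely different from the paper's: you use (v) as a hub with the dictionary lemma ``a Hermitian $M$ commutes with the orthogonal projection onto $\cV$ iff $M\cV\subseteq\cV$'' plus Greville's Theorem~\ref{thm:greville_conds}, whereas the paper distributes the proof across a sufficient/necessary pair of SVD-construction theorems (Theorems~\ref{thm:sufficient} and~\ref{thm:necessary}) for (i)$\Leftrightarrow$(ii), a separate commutation theorem (Theorem~\ref{thm:pAA_BB_and_AA_BpB_commute}) for (v), and separate statements (Lemma~\ref{lem:ROL_AB_implies_ROL_AA_BB}, Remark~\ref{rem:range_of_Grevilles_spaces}) for (iv) and (iii). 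Your arguments for (i)$\Leftrightarrow$(v), (iv)$\Leftrightarrow$(v) (the PSD square-root trick is clean), (v)$\Rightarrow$(ii), (ii)$\Rightarrow$(v) and (v)$\Rightarrow$(iii) are all correct; the only looseness in (v)$\Rightarrow$(ii) is ``ordering the common part first,'' which conflicts with the decreasing-singular-value convention, but that imprecision is inherited from the statement's undefined ``essential reduced SVD,'' not introduced by you.

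There is, however, a genuine gap in the last step of (iii)$\Rightarrow$(ii). From (iii) you correctly derive $\dim W=\rank AB=:r$ and that all nonzero singular values of $V_A^*U_B$ equal $1$, but the conclusion that this ``is exactly the block form of (ii)'' is a non sequitur: principal angles in $\bracc*{0,\frac{\pi}{2}}$ together with $\dim W=r$ only give $\pinv{B}\pinv{A}\in \minv{(AB)}{1,2}$ (Theorem~\ref{thm:12_eqv}), and this directly contradicts the eigenvector-aware point you yourself emphasized — the block form additionally requires $W$ to be spanned by eigenvectors of $A^*A$ and of $BB^*$, information the angle data alone does not carry. The paper's own example
\[A=\left[\begin{array}{rrr} 1 & 0 & 1\\ 0 & 1 & -1 \end{array}\right],\quad
B=\left[\begin{array}{rr} 1 & 0\\ 0 & 1\\ 2 & 3 \end{array}\right],\]
kills the step: there $\dim W=\rank AB=1$ and the principal angles are $0,\frac{\pi}{2}$, yet the reverse order law, hence (ii), fails. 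The repair is short and uses only pieces you already have: since $\rng{A^*AB}\subseteq\rng{A^*}$ and $\rng{BB^*A^*}\subseteq\rng{B}$ hold unconditionally, equality in (iii) places the common range inside $\rng{A^*}\cap\rng{B}$, so $\rng{A^*AB}\subseteq\rng{B}$ and $\rng{BB^*A^*}\subseteq\rng{A^*}$ — precisely Greville's two inclusions. Thus (iii)$\Rightarrow$(i), and (ii) then follows from your already-established chain (i)$\Leftrightarrow$(v)$\Rightarrow$(ii).
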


Moreover, there is a similarity between the conditions on $\pinv{B}\pinv{A}$ being an $\{1,2\}-$
and $\{1,2,3,4\}-$ inverse of $AB$, see Tables~\ref{tab:similar_equivalent} and \ref{tab:similar_implied}, also Theorem~\ref{thm:123_124_equiv}.

\renewcommand{\arraystretch}{1.4}
\begin{table}[h]
    \centering
    \begin{tabular}{c|c}
       $\{1,2\}$-ROL  & $\{1,2,3,4\}$-ROL \\ \hline
       $\rng{\pinv{A}AB}\subset\rng{B}$ or $\rng{B\pinv{B}\pinv{A}}\subset\rng{\pinv{A}}$  & $\rng{A^*AB}\subset\rng{B}$ and $\rng{BB^*A^*}\subset\rng{A^*}$ \\
        $\rng{\pinv{A}AB}=\rng{B\pinv{B}\pinv{A}}$ & $\rng{A^*AB}=\rng{BB^*A^*}$\\
      $\pinv{A}AB\pinv{B}\pinv{A}AB\pinv{B}=B\pinv{B}\pinv{A}A$   & $\pinv{A}ABB^*A^*AB\pinv{B}=BB^*A^*A$ \\
      $\pinv{A}A,B\pinv{B}$ commute   & $\pinv{A}A,BB^*$ commute and $B\pinv{B},A^*A$ commute \\
       $\pinv{A}AB\pinv{B}$ or $B\pinv{B}\pinv{A}A$ is an orthogonal projection & $B\pinv{\brac{AB}}A\text{ is an orthogonal projection}$ \\
       $\{1,2\}$-ROL holds for ${\pinv{A}A},B\pinv{B}$ or for ${A^*A},BB^*$ &  $\{1,2,3,4\}$-ROL holds for ${A^*A},BB^*$ \\
      \makecell{the principal angles between $\rng{A^*}$ and $\rng{B}$ \\ belong to the set $\bracc*{0,\frac{\pi}{2}}$}  & \makecell{the principal angles between $\rng{A^*}$ and $\rng{B}$ \\ belong to the set $\bracc*{0,\frac{\pi}{2}}$ and  $\rng{A^*}\cap\rng{B}$ \\  is spanned by left singular vectors of $A$ \\  and right singular vectors of $B$ in some \\  SVD decompositions of $A$ and $B$}\\

    \end{tabular}
    \caption{Similarity between equivalent conditions}
    \label{tab:similar_equivalent}
\end{table}

\renewcommand{\arraystretch}{1.4}
\begin{table}[h]
    \centering
    \begin{tabular}{c|c}
        $\{1,2\}$-ROL  & $\{1,2,3,4\}$-ROL \\ \hline
       $\rank AB=\dim(\rng{A^*}\cap\rng{B})$  & $\rank AB=\dim(\rng{A^*}\cap\rng{B})$\\
        $\rng{\pinv{A}AB}=\rng{B\pinv{B}\pinv{A}}=\rng{A^*}\cap\rng{B}$ & $\rng{A^*AB}=\rng{BB^*A^*}=\rng{A^*}\cap\rng{B}$\\
        other possible $\{1,2\}$-ROL hold &  other possible $\{1,2,3,4\}$-ROL hold\\
    \end{tabular}
    \caption{Similarity between conditions implied by ROL}
    \label{tab:similar_implied}
\end{table}

I would like to thank Michał Karpowicz for introducing me to this topic and thank my co-workers, especially Aikaterini Aretaki and Antonina Krajewska, and the NASK administration for creating a stimulating and friendly working environment.

\section{Notation}\label{sec:notation}
The set of all matrices with $m$ rows, $n$ columns, and coefficients in field $\bK$ is denoted $\bK^{m\times n}$, where $\bK=\bR$ or $\bK=\bC$.  The column space of matrix $A$ is denoted by $\rng{A}$ and the null space by $\nullsp{A}$. The linear span of vectors $v_1,\ldots,v_n$, i.e., the set of all linear combinations of $v_1,\ldots,v_n$ over $\bK$ is denoted by $\linsp(v_1,\ldots,v_n)$. The Moore--Penrose pseudoinverse of matrix $A$ is denoted by $\pinv{A}$. The complex conjugate is denoted by $A^*$. Matrix is Hermitian (or symmetric over $\bR$) if $A^*=A$. A square matrix is unitary (or orthogonal over $\bR$) if $A^*A=I$ where $I$ denotes the unit matrix. SVD stands for singular value decomposition, i.e., decomposition $A=U\Sigma V^*$, where $U,V$ are unitary matrices, and $\Sigma$ is a real non--negative generalized diagonal matrix with decreasing diagonal entries. If $\rank A=r$ then the first $r$ columns of matrix $U$ are called left singular vectors of $A$ and likewise the first $r$ columns of matrix $V$ are called right singular vectors of $A$. Singular value decomposition of a fixed matrix $A$ is not unique, except for the matrix $\Sigma$. For example, $I=QIQ^*$ for any unitary matrix $Q$. For any matrix $A\in\bK^{m\times n}$, the matrix $A_{p:q,s:t}$ is a submatrix of $A$ consisting of rows from $p$ to $q$ and columns from $s$ to $t$, both $q$ and $t$ included (MATLAB notation). A single colon with no numbers indicates that there are no restrictions. Furthermore, for $I\subset\{1,\ldots, n\}$ the matrix $A_I\in\bK^{m\times\abs{I}}$ is a submatrix of the matrix $A$ consisting of columns indexed by $I$. The unitary group is denoted by $U(n,\bK)$. 

\section{Sufficient Condition}\label{sec:sufficient}

\begin{theorem}\label{thm:sufficient}
  Let $A\in\bK^{m\times n},\ B\in\bK^{n\times k}$ be two matrices of ranks $r_A, r_B$ with  given SVD decompositions $A=U_A\Sigma_A V_A^*$ and $B=U_B\Sigma_B V_B^*$. 
 If there exists a (possibly empty) set $J\subset\{1,\ldots,r_A\}$, indexing the first $r_A$ columns of matrix $V_A$ (i.e., the right singular vectors of $A$), such that
\begin{equation}\label{eq:spaces_suff}
\linsp \left(u_{B,i}\in\bK^n \mid u_{B,i}\notin\nullsp{A},\ i\in\{1,\ldots,r_B\}\right)=\rng{\left(V_A\right)_J},
\end{equation}
where $u_{B,i}$ denotes the $i$-th column of matrix $U_B$, then
    \[\pinv{(AB)}=\pinv{B}\pinv{A}.\]  
\end{theorem}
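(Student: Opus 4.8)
The plan is to reduce the statement to Greville's two inclusions from Theorem~\ref{thm:greville_conds}, namely $\rng{A^*AB}\subset\rng{B}$ and $\rng{BB^*A^*}\subset\rng{A^*}$, and to verify each directly from hypothesis~\eqref{eq:spaces_suff}. First I would record the standard consequences of the two decompositions: writing $A^*A=V_A\Sigma_A^*\Sigma_A V_A^*$ and $BB^*=U_B\Sigma_B\Sigma_B^*U_B^*$, the right singular vectors $v_{A,j}$ are eigenvectors of $A^*A$ with positive eigenvalue for $j\le r_A$, with $\rng{A^*}=\linsp(v_{A,1},\ldots,v_{A,r_A})$ and $\nullsp{A}=\rng{A^*}^{\perp}$, while the left singular vectors $u_{B,i}$ are eigenvectors of $BB^*$ with positive eigenvalue $s_i^2>0$ for $i\le r_B$, with $\rng{B}=\linsp(u_{B,1},\ldots,u_{B,r_B})$. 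I then set $S=\{i\le r_B\mid u_{B,i}\notin\nullsp{A}\}$ and $W=\linsp(u_{B,i}\mid i\in S)$, so that~\eqref{eq:spaces_suff} reads $W=\rng{\left(V_A\right)_J}$. The two features of this equality that drive the proof are that $W$ is spanned by right singular vectors of $A$, hence $A^*A$-invariant, and that $W\subset\rng{A^*}$ because $J\subset\{1,\ldots,r_A\}$.

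For the first inclusion I would use $\rng{A^*AB}=A^*A(\rng{B})$. Since $A^*A$ annihilates each $u_{B,i}$ with $i\notin S$ (these lie in $\nullsp{A}=\nullsp{A^*A}$) and maps the eigenvector span $W$ onto itself, I obtain $\rng{A^*AB}=A^*A(W)=W$, and $W\subset\rng{B}$ because $W$ is spanned by some of the columns $u_{B,1},\ldots,u_{B,r_B}$ generating $\rng{B}$.

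For the second inclusion I would show that $\rng{A^*}$ is invariant under $BB^*$. Writing $BB^*=\sum_{i\le r_B}s_i^2\,u_{B,i}u_{B,i}^*$ and taking any $x\in\rng{A^*}$, every summand with $i\notin S$ vanishes, since then $u_{B,i}\in\nullsp{A}=\rng{A^*}^{\perp}$ forces $u_{B,i}^*x=0$, while each summand with $i\in S$ is a scalar multiple of $u_{B,i}\in W\subset\rng{A^*}$. Hence $BB^*x\in W\subset\rng{A^*}$, so $\rng{BB^*A^*}=BB^*(\rng{A^*})\subset\rng{A^*}$; in fact the same rank-one computation gives $BB^*(\rng{A^*})=W$, so that $\rng{A^*AB}=\rng{BB^*A^*}=W$, foreshadowing the equality condition of the later theorem. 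With both Greville inclusions established, Theorem~\ref{thm:greville_conds} yields $\pinv{(AB)}=\pinv{B}\pinv{A}$.

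The substantive step, and the one I expect to carry the weight of the argument, is the second inclusion: it is precisely here that the two parts of hypothesis~\eqref{eq:spaces_suff} are used in tandem, namely that the ``bad'' left singular vectors of $B$ lie in $\nullsp{A}$, so their rank-one contributions to $BB^*$ kill $\rng{A^*}$ by orthogonality, and that the span $W$ of the remaining ones is a coordinate subspace of $\rng{A^*}$ spanned by right singular vectors of $A$, so that no cross terms leak out of $\rng{A^*}$. The remaining care is purely bookkeeping with the index sets, together with the degenerate cases $S=\emptyset$ (equivalently $AB=0$) and $J=\emptyset$, both of which make $W=\{0\}$ and render the two inclusions trivial.
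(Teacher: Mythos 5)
Your proof is correct and takes essentially the same route as the paper: both reduce the claim to Greville's two inclusions and verify them from the rank-one expansions of $A^*A$ and $BB^*$ together with hypothesis~\eqref{eq:spaces_suff}. The only difference is presentational — you phrase the verification as subspace invariance ($A^*A(\rng{B})=W\subset\rng{B}$, $BB^*(\rng{A^*})\subset W\subset\rng{A^*}$), while the paper checks the equivalent entrywise orthogonality relations in the bases given by $U_B$ and $V_A$; your closing observation $\rng{A^*AB}=\rng{BB^*A^*}=W$ is exactly what the paper records afterwards in Remark~\ref{rem:range_of_Grevilles_spaces}.
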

\begin{proof}
    If  the set $J$ is empty then $AB=0$ and $\pinv{B}\pinv{A}=0$. Assume that $J\neq\emptyset$. For simplicity, let $v_i=v_{A,i}\in\bK^n$ denote the $i$-th column of matrix $V_A$ and let $u_i=u_{B,i}\in\bK^n$ denote the $i$-th column of matrix $U_B$. Set $r_A=\rank A,r_B=\rank B$. By the SVD decompositions
    \[A^*A=\sum_{j=1}^{r_A} \sigma^2_{A,j} v_j v_j^*,\quad BB^*=\sum_{j=1}^{r_B} \sigma^2_{B,j} u_j u_j^*.\]
    Conditions 
    \[\rng{A^*AB}\subset\rng{B}\quad\text{and} \quad\rng{BB^*A^*}\subset\rng{A^*},\]
    are equivalent to
    
    \begin{equation}\label{eq:GA}
        u_{i_2}^*\sum_{j=1}^{r_A} \sigma^2_{A,j} v_j v_j^* u_{i_1}=0,
    \end{equation}
    for $i_1=1,\ldots,r_B,\ i_2=r_B+1,\ldots,n$ and
    \begin{equation}\label{eq:GB}
        v_{i_2}^*\sum_{j=1}^{r_B} \sigma^2_{B,j} u_j u_j^* v_{i_1}=0,
    \end{equation}
    for $i_1=1,\ldots,r_A,\ i_2=r_A+1,\ldots,n$.

    Consider first Equation~(\ref{eq:GA}). If $u_{i_1}\in \nullsp{A}$ then
    $\sum_{j=1}^{r_A} \sigma^2_{A,j} v_j v_j^* u_{i_1}=0$ since $v_j\in \rng{A^*}$ for any $j=1,\ldots,r_A$. Otherwise $u_{i_1}=\sum_{j\in J}\alpha_j v_j$ for some $\alpha_j\in\bK$ by assumption~(\ref{eq:spaces_suff}) and, consequently
    \[\sum_{j=1}^{r_A} \sigma^2_{A,j} v_j v_j^* u_{i_1}=\sum_{j\in J} \sigma^2_{A,j} a_j v_j\in \rng{B}, \]
    by orthogonality of $v_i$'s and the main assumption~(\ref{eq:spaces_suff}) (that is $v_j$ for $j\in J$ belongs to the left hand side of~(\ref{eq:spaces_suff}) and hence $v_j\in\rng{B}$). Therefore, Equation~(\ref{eq:GA}) is satisfied.

    To show that Equation~(\ref{eq:GB}) is satisfied, note that $v_{i_2}\in\nullsp{A}$ and all $u_j$'s which are not in $\nullsp{A}$ are orthogonal to $v_{i_2}$ (by assumption~(\ref{eq:spaces_suff}), they belong to the left hand side and therefore to the right hand side, which is given by the linear span of vectors $v_j\in\rng{A^*},j\in J$). Hence
   \[v_{i_2}^*\sum_{j=1}^{r_B} \sigma^2_{B,j} u_j u_j^* v_{i_1}=\sum_{u_j\in\nullsp{A}} \sigma^2_{B,j} \left(v_{i_2}^*u_j\right) u_j^* v_{i_1}=0,\]
    as $v_{i_1}\in\rng{A^*}$. 
    \end{proof}

\begin{remark}
    By the necessary condition, cf.~Theorem~\ref{thm:necessary}, the set $J$ can be given in more concrete terms. 
\end{remark}

\begin{remark}
    For a fixed matrix $A$ with a given SVD decomposition $A=U_A\Sigma_A V_A^*$ and a given set $J\subset\{1,\ldots,r_A\}$ matrix $B$ that satisfies the assumptions of Theorem~\ref{thm:sufficient} may be constructed as follows.
    Let $\left(U_B\right)_{:,1:\abs{J}}=\brac*{V_A}_J$. Then, for example, let
    \[\left(U_B\right)_{:,(\abs{J}+1):r_B)}=\left(V_A\right)_{:,(r_A+1):(r_A+(r_B-\abs{J}))},\]
    that is attach to the matrix $\left(U_B\right)_{:,1:\abs{J}}$ sufficiently many orthonormal columns from $\nullsp{A}$ so it has $r_B$ columns.
    Find any orthonormal basis of the orthogonal complement of the range of the matrix  $\left(U_B\right)_{:,1:r_B}$ thus obtained and put it in columns of $\left(U_B\right)_{:,(r_B+1):n}$. Arbitrarily assign the first $r_B$ diagonal elements of $\Sigma_B$ with positive numbers and take any for $V_B$ take any unitary matrix in $V_B\in\bK^{k\times k}$. See Section~\ref{sec:matlab} for a concrete implementation in MATLAB code.

    In particular, any matrix $B$ of rank $r_B$ not greater than $n$ can be constructed by choosing for the right singular vectors of $B$, for example, up to $r_A$ singular vectors of $A$ and up to $n-r_A$ vectors from the orthonormal basis $v_{A,r_A+1},\ldots,v_{A,n}\in\nullsp{A}$ . For arbitrary $k\ge r_B$, one may take any $\Sigma_B=\diag(\sigma_{B,1},\ldots,\sigma_{B,r_b},0,\ldots,0)\in\bK^{n\times k}$ generalized diagonal matrix with $\sigma_{B,i}>0$ and any unitary matrix $V_B\in\bK^{k\times k}$.
\end{remark}

In fact, the example from the Introduction was obtained in this way. As an immediate consequence of above remark we have.

\begin{corollary}
    For any matrix $A\in\bK^{m\times n}$ and any $s\in\{1,\ldots,n\}$ and $k\ge s$ there exists a matrix $B\in\bK^{n\times k}$ such that
    \[\rank B=s\quad\text{and}\quad \pinv{(AB)}=\pinv{B}\pinv{A}.\]
\end{corollary}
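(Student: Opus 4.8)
The plan is to derive the corollary directly as an application of the construction described in the preceding remark, which in turn invokes Theorem~\ref{thm:sufficient}. The goal is, for a fixed $A\in\bK^{m\times n}$ and prescribed rank $s\in\{1,\ldots,n\}$ and number of columns $k\ge s$, to exhibit a single matrix $B\in\bK^{n\times k}$ with $\rank B=s$ and $\pinv{(AB)}=\pinv{B}\pinv{A}$. Since existence is all that is required, I would simply make one concrete choice in the recipe of the remark and verify that its hypotheses are met. First I would fix any SVD decomposition $A=U_A\Sigma_A V_A^*$, so that the columns $v_{A,1},\ldots,v_{A,n}$ of $V_A$ form an orthonormal basis of $\bK^n$, with $v_{A,1},\ldots,v_{A,r_A}$ spanning $\rng{A^*}$ and $v_{A,r_A+1},\ldots,v_{A,n}$ spanning $\nullsp{A}$.

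Next I would choose the set $J\subset\{1,\ldots,r_A\}$ needed for Theorem~\ref{thm:sufficient}. The natural choice is to take as many of the right singular vectors of $A$ as possible, up to $s$ of them, and fill in the remaining required columns from $\nullsp{A}$. Concretely, let $t=\min(s,r_A)$ and set $J=\{1,\ldots,t\}$; then assign the first $t$ columns of $U_B$ to be $v_{A,1},\ldots,v_{A,t}$, and the next $s-t$ columns of $U_B$ to be $s-t$ of the null-space vectors $v_{A,r_A+1},\ldots,v_{A,n}$. This requires $s-t\le n-r_A$, which holds because if $s>r_A$ then $t=r_A$ and $s-t=s-r_A\le n-r_A$, while if $s\le r_A$ then $s-t=0$. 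I would then complete these $s$ orthonormal columns to a full orthonormal basis of $\bK^n$ in the remaining columns of $U_B$, choose $\Sigma_B=\diag(\sigma_{B,1},\ldots,\sigma_{B,s},0,\ldots,0)\in\bK^{n\times k}$ with positive $\sigma_{B,i}$, and take any unitary $V_B\in\bK^{k\times k}$. Setting $B=U_B\Sigma_B V_B^*$ gives $\rank B=s$ by construction.

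It then remains to check the hypothesis~(\ref{eq:spaces_suff}) of Theorem~\ref{thm:sufficient}. The columns $u_{B,i}$ for $i\in\{1,\ldots,s\}$ that do \emph{not} lie in $\nullsp{A}$ are exactly the first $t$, namely $v_{A,1},\ldots,v_{A,t}$, since the remaining chosen columns were taken from $\nullsp{A}$ and the singular values beyond index $s$ are zero. Their span is precisely $\linsp(v_{A,1},\ldots,v_{A,t})=\rng{(V_A)_J}$, so~(\ref{eq:spaces_suff}) holds with this $J$. In the degenerate case where every selected $u_{B,i}$ lies in $\nullsp{A}$ (which happens only if one chooses $t=0$), one instead takes $J=\emptyset$ and $AB=0$, again satisfying the theorem. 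Applying Theorem~\ref{thm:sufficient} yields $\pinv{(AB)}=\pinv{B}\pinv{A}$, completing the proof.

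I do not anticipate a genuine obstacle here, as the statement is an existence claim and the remark already supplies an explicit template; the only point demanding care is the bookkeeping that guarantees enough orthonormal vectors are available, i.e.\ that the inequality $s-t\le n-r_A$ and the bound $s\le n$ are respected so that $U_B$ can indeed be completed to a unitary matrix. Verifying that the non-null columns among the first $s$ columns of $U_B$ are exactly the right singular vectors indexed by $J$, and hence that their span equals $\rng{(V_A)_J}$, is the substantive step, and it follows immediately from the construction.
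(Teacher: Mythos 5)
Your proof is correct and takes essentially the same route as the paper: there the corollary is presented as an immediate consequence of the preceding remark, whose recipe (taking $\min(s,r_A)$ right singular vectors of $A$, padded with vectors from $\nullsp{A}$, as the left singular vectors of $B$, then invoking Theorem~\ref{thm:sufficient}) is exactly the construction you carry out. Your extra bookkeeping (the inequality $s-t\le n-r_A$, and the degenerate case $t=0$, i.e.\ $A=0$, handled via $J=\emptyset$) merely makes explicit what the paper leaves implicit.
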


A symmetric statement to Theorem~\ref{thm:sufficient}, for $J\subset\{1,\ldots,r_B\}$, will be obtained as a consequence by the following trick.

\begin{lemma}\label{lem:pinv_and_conj}
    For any matrix $A\in\bK^{m\times n}$
    \[\pinv{\left(A^*\right)}=\left(\pinv{A}\right)^*,\]
    and hence
    \[\pinv{(AB)}=\pinv{B}\pinv{A}\Longleftrightarrow \pinv{(B^*A^*)}=\pinv{\left(A^*\right)}\pinv{\left(B^*\right)}.\]
\end{lemma}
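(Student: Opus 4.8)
The plan is to establish the first identity $\pinv{\brac*{A^*}}=\brac*{\pinv{A}}^*$ directly from the uniqueness of the Moore--Penrose pseudoinverse, and then to read off the stated equivalence by conjugate-transposing.

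For the identity, I would write $G=\pinv{A}$ and propose $X=G^*$ as a candidate for $\pinv{\brac*{A^*}}$. Since the pseudoinverse is the unique matrix satisfying the four Penrose conditions, it suffices to verify conditions \eqref{eq:Penrose_cond1}--\eqref{eq:Penrose_cond4} for $X$ with $A^*$ in place of $A$. Each of these follows by conjugate-transposing the corresponding property of $G$: from \eqref{eq:Penrose_cond1} one gets $A^*XA^*=A^*G^*A^*=(AGA)^*=A^*$, and from \eqref{eq:Penrose_cond2} one gets $XA^*X=G^*A^*G^*=(GAG)^*=G^*=X$. The only point that requires attention is that the two symmetry conditions are interchanged by conjugation: $A^*X=A^*G^*=(GA)^*$, which is Hermitian precisely because $GA$ is, by \eqref{eq:Penrose_cond4}; and $XA^*=G^*A^*=(AG)^*$, which is Hermitian because $AG$ is, by \eqref{eq:Penrose_cond3}. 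Hence $X=G^*$ satisfies all four conditions and, by uniqueness, equals $\pinv{\brac*{A^*}}$.

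For the equivalence, I would conjugate-transpose both sides of $\pinv{\brac*{AB}}=\pinv{B}\pinv{A}$. The right-hand side becomes $\brac*{\pinv{A}}^*\brac*{\pinv{B}}^*=\pinv{\brac*{A^*}}\pinv{\brac*{B^*}}$ by the identity just proved, while the left-hand side becomes $\pinv{\brac*{(AB)^*}}=\pinv{\brac*{B^*A^*}}$, using the same identity together with $(AB)^*=B^*A^*$. This gives the forward implication. The converse is obtained identically, starting from $\pinv{\brac*{B^*A^*}}=\pinv{\brac*{A^*}}\pinv{\brac*{B^*}}$, conjugate-transposing, and simplifying with $(A^*)^*=A$ and $(B^*)^*=B$.

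I do not expect a genuine obstacle here: the whole argument reduces to repeated use of $(XY)^*=Y^*X^*$ and the uniqueness of the pseudoinverse. The single subtlety worth flagging is that conjugation swaps conditions \eqref{eq:Penrose_cond3} and \eqref{eq:Penrose_cond4}, which is exactly why the symmetric candidate $X=G^*$ --- and no asymmetric variant --- is the one forced by the four conditions.
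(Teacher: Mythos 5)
Your proof is correct, but it takes a different route from the paper for the key identity $\pinv{\left(A^*\right)}=\left(\pinv{A}\right)^*$. The paper's proof is a one-liner: it invokes the formula for the pseudoinverse in terms of an SVD decomposition (if $A=U\Sigma V^*$ then $\pinv{A}=V\pinv{\Sigma}U^*$, and conjugate-transposing the decomposition of $A$ gives a decomposition of $A^*$ whose associated pseudoinverse is visibly $\left(\pinv{A}\right)^*$). You instead verify the four Penrose conditions \eqref{eq:Penrose_cond1}--\eqref{eq:Penrose_cond4} for the candidate $X=\left(\pinv{A}\right)^*$ against $A^*$ and conclude by uniqueness, correctly noting the one subtle point, namely that conjugation swaps conditions \eqref{eq:Penrose_cond3} and \eqref{eq:Penrose_cond4}. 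Your argument is more elementary and self-contained: it needs only the defining axioms and the uniqueness statement from Section~\ref{sec:intro}, not the existence of an SVD or the SVD expression for $\pinv{A}$. The paper's argument is shorter and fits its overall SVD-centric toolkit, but yours would work verbatim in settings where one takes the Penrose axioms as the starting point. The derivation of the equivalence from the identity (conjugate-transposing both sides and using $(AB)^*=B^*A^*$) is the same in both.
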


\begin{proof}
    The first statement follows from the formula expressing the pseudoinverse in terms of any SVD decomposition and the latter follows from the first one.
\end{proof}

\begin{corollary}\label{cor:conjg_ROL}
     Let $A\in\bK^{m\times n},\ B\in\bK^{n\times k}$ be two matrices of ranks $r_A, r_B$ with  given SVD decompositions $A=U_A\Sigma_A V_A^*$ and $B=U_B\Sigma_B V_B^*$. 
 If there exists a (possibly empty) set $J\subset\{1,\ldots,r_B\}$, indexing first $r_B$ columns of matrix $U_B$ (i.e., the left singular vectors of $B$), such that
\[\linsp \left(v_i\in\bK^n \mid v_i\notin\nullsp{B^*},\ i\in\{1,\ldots,r_A\}\right)=\rng{\left(U_B\right)_J},\]
with $v_i$ being the $i$-th column of matrix $V_A$, then
    \[\pinv{(AB)}=\pinv{B}\pinv{A}.\]

\end{corollary}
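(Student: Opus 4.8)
The plan is to derive this statement from Theorem~\ref{thm:sufficient} purely through the conjugate--transpose duality recorded in Lemma~\ref{lem:pinv_and_conj}, rather than by redoing the computation with $\nullsp{A}$ and $\rng{B}$ replaced by $\nullsp{B^*}$ and $\rng{A^*}$. By Lemma~\ref{lem:pinv_and_conj} the desired identity $\pinv{(AB)}=\pinv{B}\pinv{A}$ is equivalent to $\pinv{(B^*A^*)}=\pinv{(A^*)}\pinv{(B^*)}$, so it suffices to apply Theorem~\ref{thm:sufficient} to the product $B^*A^*$, with $B^*$ in the role of the first factor (the matrix called $A$ in the theorem) and $A^*$ in the role of the second factor (the matrix called $B$).

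First I would fix SVD decompositions of the conjugate transposes. Applying $*$ to $A=U_A\Sigma_A V_A^*$ and $B=U_B\Sigma_B V_B^*$ gives $A^*=V_A\Sigma_A^* U_A^*$ and $B^*=V_B\Sigma_B^* U_B^*$, which are again SVD decompositions because $U_A,V_A,U_B,V_B$ are unitary and $\Sigma_A^*,\Sigma_B^*$ are non--negative generalized diagonal with the same singular values. Under this substitution the right singular vectors of the first factor $B^*$ are precisely the columns of $U_B$ (the left singular vectors of $B$), while the left singular vectors of the second factor $A^*$ are precisely the columns of $V_A$ (the right singular vectors of $A$); the ranks are preserved, $\rank B^*=r_B$ and $\rank A^*=r_A$.

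It then remains to verify that the hypothesis of the corollary is exactly the hypothesis of Theorem~\ref{thm:sufficient} read through this dictionary. The set $J\subset\{1,\ldots,r_B\}$ now indexes the right singular vectors of $B^*$, the index $i$ ranges over $\{1,\ldots,r_A\}$ labelling the left singular vectors $v_i$ of $A^*$, and the membership condition $u_{B,i}\notin\nullsp{A}$ appearing in the theorem becomes $v_i\notin\nullsp{B^*}$, since the test is always against the null space of the first factor. Hence condition~(\ref{eq:spaces_suff}) for $B^*A^*$ reads exactly
\[\linsp\left(v_i\in\bK^n\mid v_i\notin\nullsp{B^*},\ i\in\{1,\ldots,r_A\}\right)=\rng{(U_B)_J},\]
which is the stated assumption. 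Theorem~\ref{thm:sufficient} therefore gives $\pinv{(B^*A^*)}=\pinv{(A^*)}\pinv{(B^*)}$, and Lemma~\ref{lem:pinv_and_conj} turns this back into $\pinv{(AB)}=\pinv{B}\pinv{A}$.

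The only point requiring care, and the sole genuine obstacle, is the bookkeeping of the swap: under $*$ the left and right singular vectors interchange and the relevant null space passes from $\nullsp{A}$ to $\nullsp{B^*}$. I would therefore write out the correspondences $U_{A^*}=V_A$, $V_{A^*}=U_A$, $U_{B^*}=V_B$, $V_{B^*}=U_B$ explicitly and check that no index range or rank is mismatched; once this is done the translation is immediate and no new computation beyond Theorem~\ref{thm:sufficient} is needed.
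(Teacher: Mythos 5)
Your proof is correct and takes exactly the paper's approach: the paper's own proof of this corollary is simply ``Follows from Theorem~\ref{thm:sufficient} and Lemma~\ref{lem:pinv_and_conj}'', i.e., dualizing via conjugate transpose and applying the sufficient condition to the product $B^*A^*$. Your explicit bookkeeping of the correspondences $U_{A^*}=V_A$, $V_{A^*}=U_A$, $U_{B^*}=V_B$, $V_{B^*}=U_B$ merely spells out the translation the paper leaves implicit.
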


\begin{proof}
    Follows from Theorem~\ref{thm:sufficient} and Lemma~\ref{lem:pinv_and_conj}.
\end{proof}

\begin{remark}
    When $\bK=\bR$ Corollary~\ref{cor:conjg_ROL} can be restated in terms of rows of matrix $A$.
\end{remark}

The sufficient condition can be easily proven under more restrictive assumption.

\begin{lemma}
    Let $A=U_A\Sigma_A V_A$ be any SVD decomposition of matrix $A\in\bK^{m\times n}$. If $B\in\bK^{n\times k}$ and there exists a non--empty set $J\subset \{1,\ldots,r_A\}$, where $r_A=\rank A$ such that 
   \begin{equation}\label{eq:restr_span}
   \rng{B}=\rng{\left(V_A\right)_J},    
   \end{equation}
    then $\pinv{(AB)}=\pinv{B}\pinv{A}$.

    If there exists a non--empty set $J\subset \{1,\ldots,n\}$,such that equation~(\ref{eq:restr_span}) holds then $\pinv{B}\pinv{A}\in \minv{(AB)}{1,2,3}$, i.e., $\pinv{B}\pinv{A}$ satisfies conditions~(\ref{eq:Penrose_cond1}),(\ref{eq:Penrose_cond2}),(\ref{eq:Penrose_cond3}) for matrix $AB$.
\end{lemma}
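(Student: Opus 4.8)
The plan is to pick an SVD of $B$ adapted to the columns of $V_A$ and then check the relevant Penrose conditions for $X=\pinv{B}\pinv{A}$ against $M=AB$ by direct projection computations. Since only an SVD of $A$ is prescribed and the pseudoinverse of $B$ does not depend on the chosen SVD, I am free to choose a convenient one for $B$. Writing $v_j:=v_{A,j}$ for the columns of $V_A$, the hypothesis $\rng{B}=\rng{(V_A)_J}$ together with orthonormality of the $v_j$ gives $\rank B=\abs{J}=:r_B$, so I may take the first $r_B$ left singular vectors of $B$ (the first $r_B$ columns of $U_B$) to be exactly $\{v_j:j\in J\}$. I then split $J=J_1\cup J_2$ with $J_1=J\cap\{1,\ldots,r_A\}$ and $J_2=J\cap\{r_A+1,\ldots,n\}$, so that $v_j\in\rng{A^*}$ for $j\in J_1$ and $v_j\in\nullsp{A}$ for $j\in J_2$. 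The facts I would record are: $B\pinv{B}=\sum_{j\in J}v_jv_j^*$ is the orthogonal projection onto $\rng{B}$; $Av_j=\sigma_{A,j}u_{A,j}$ for $j\le r_A$ while $Av_j=0$ for $j>r_A$; and, expanding $\pinv{A}=V_A\pinv{\Sigma_A}U_A^*$, that $v_j^*\pinv{A}=\sigma_{A,j}^{-1}u_{A,j}^*$ for $j\le r_A$ and $v_j^*\pinv{A}=0$ for $j>r_A$.

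Next I would verify conditions~(\ref{eq:Penrose_cond1}) and~(\ref{eq:Penrose_cond3}) simultaneously by computing
\[MX=A(B\pinv{B})\pinv{A}=\sum_{j\in J}(Av_j)(v_j^*\pinv{A})=\sum_{j\in J_1}u_{A,j}u_{A,j}^*,\]
where every term with $j\in J_2$ drops out because $Av_j=0$. Since the $u_{A,j}$, $j\in J_1$, are orthonormal and span $\rng{AB}$, this shows $MX$ is precisely the orthogonal projection onto $\rng{M}$. In particular $MX$ is Hermitian, giving~(\ref{eq:Penrose_cond3}), and $MXM=M$ because projecting the columns of $M$ onto $\rng{M}$ leaves them fixed, giving~(\ref{eq:Penrose_cond1}).

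For condition~(\ref{eq:Penrose_cond2}) I would write $XMX=\pinv{B}(\pinv{A}A)(B\pinv{B})\pinv{A}$ and compare it with $X=\pinv{B}(B\pinv{B})\pinv{A}$, the latter using $\pinv{B}B\pinv{B}=\pinv{B}$. Using $\pinv{A}A=\sum_{l=1}^{r_A}v_lv_l^*$ and orthonormality, $(\pinv{A}A)(B\pinv{B})=\sum_{j\in J_1}v_jv_j^*$, so their difference is $X-XMX=\pinv{B}\bigl(\sum_{j\in J_2}v_jv_j^*\bigr)\pinv{A}$, which vanishes since $v_j^*\pinv{A}=0$ for $j\in J_2$. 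Hence $XMX=X$, and together with the previous paragraph this proves $\pinv{B}\pinv{A}\in\minv{(AB)}{1,2,3}$, the second assertion.

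Finally, for the first assertion the stronger hypothesis $J\subset\{1,\ldots,r_A\}$ means $J_2=\emptyset$ and $\rng{B}\subset\rng{A^*}$, whence $\pinv{A}AB=B$ and $XM=\pinv{B}(\pinv{A}A)B=\pinv{B}B$ is itself an orthogonal projection, hence Hermitian, supplying the remaining condition~(\ref{eq:Penrose_cond4}); with all four Penrose conditions I conclude $\pinv{(AB)}=\pinv{B}\pinv{A}$. (Alternatively, this case is immediate from Theorem~\ref{thm:sufficient} applied with the same $J$, since each chosen left singular vector of $B$ lies outside $\nullsp{A}$ and their span is $\rng{(V_A)_J}$.) The only delicate points are the adapted choice of SVD for $B$ and the bookkeeping identity $v_j^*\pinv{A}=0$ for $j>r_A$; once these are set up, each Penrose condition collapses to an orthogonality cancellation, so I expect no substantial obstacle.
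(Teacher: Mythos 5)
Your argument is correct in substance, but it opens with a claim that is false in general and that, fortunately, you never actually use: you cannot always ``take the first $r_B$ left singular vectors of $B$ to be exactly $\{v_j : j\in J\}$''. In any SVD of $B$ the columns of $U_B$ are eigenvectors of $BB^*$, and an arbitrary orthonormal basis of $\rng{B}$ need not consist of such eigenvectors. Concretely, if $\left(V_A\right)_J$ consists of $e_1,e_2\in\bK^3$ and $B=\left[\begin{smallmatrix}1&1\\0&1\\0&0\end{smallmatrix}\right]$, then $\rng{B}=\linsp(e_1,e_2)=\rng{\left(V_A\right)_J}$, but $e_1,e_2$ are not eigenvectors of $BB^*=\left[\begin{smallmatrix}2&1&0\\1&1&0\\0&0&0\end{smallmatrix}\right]$, so no SVD of $B$ has them as left singular vectors. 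The flaw is harmless because the only properties of $B$ your computations invoke are $B\pinv{B}=\sum_{j\in J}v_jv_j^*$ and $\pinv{B}B\pinv{B}=\pinv{B}$: the first holds for \emph{every} $B$ with $\rng{B}=\rng{\left(V_A\right)_J}$, since $B\pinv{B}$ is the orthogonal projection onto $\rng{B}$ and the $v_j$, $j\in J$, form an orthonormal basis of that space, and the second is a Penrose identity. Replace the adapted-SVD sentence by this observation and the proof is sound. (The same remark applies to your parenthetical appeal to Theorem~\ref{thm:sufficient}: when $\rng{B}\subset\rng{A^*}$ its hypothesis holds for every SVD of $B$, so no special choice is needed there either.)

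With that repair, your route is genuinely different from the paper's. The paper verifies the two Greville inclusions $\rng{A^*AB}\subset\rng{B}$ and $\rng{BB^*A^*}\subset\rng{A^*}$ (using that the $v_{A,i}$ are eigenvectors of $A^*A$) and then invokes Theorem~\ref{thm:greville_conds} for the first claim, while for the $\{1,2,3\}$ claim it cites an external result of Tian asserting that $\rng{A^*AB}\subset\rng{B}$ is equivalent to $\pinv{B}\pinv{A}\in\minv{(AB)}{1,2,3}$. You instead check the Penrose conditions directly: $AB\pinv{B}\pinv{A}=\sum_{j\in J_1}u_{A,j}u_{A,j}^*$ is the orthogonal projection onto $\rng{AB}$, giving (\ref{eq:Penrose_cond1}) and (\ref{eq:Penrose_cond3}); the cancellation $v_j^*\pinv{A}=0$ for $j>r_A$ gives (\ref{eq:Penrose_cond2}); and, when $J\subset\{1,\ldots,r_A\}$, the identity $\pinv{B}\pinv{A}AB=\pinv{B}B$ gives (\ref{eq:Penrose_cond4}), whence $\pinv{\brac*{AB}}=\pinv{B}\pinv{A}$ by uniqueness of the pseudoinverse. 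Your version costs a few explicit projection computations, but it is self-contained --- it uses neither Greville's theorem nor the cited Tian equivalence (in effect you re-prove the direction of that equivalence which the lemma needs) --- and it yields the explicit formula $AB\pinv{\brac*{AB}}=\sum_{j\in J_1}u_{A,j}u_{A,j}^*$ as a by-product.
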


\begin{proof}
   For simplicity assume that $J=\{1,\ldots,k\}$ where $k\le r_A$. Then, by definition
   $\rng{B}\subset\rng{A^*}$ and therefore $\rng{BB^*A^*}\subset\rng {A^*}$.
   Let $v\in\rng{B}$. There exist $\alpha_i\in\bK$ such that $v=\alpha_1 v_{A,1}+\ldots+\alpha_k v_{A,k}$. By construction, c.f. Theorem~\ref{thm:construction_of_SVD}, vectors $v_{A,i}$ are eigenvectors of $A^*A$. Therefore
   \[A^*Av=\sigma_{A,1}^2\alpha_1 v_{A,1}+\ldots+\sigma_{A,k}^2\alpha_k v_{A,k}\in\rng{B}.\]
   Therefore $\rng{A^*AB}\subset\rng{B}$ and both Greville's conditions are satisfied.

   When $J\subset\{1,\ldots,n\}$ some of vectors $v_i$ might be in the null--space of $A$ and the condition $\rng{A^*AB}\subset\rng{B}$ holds, which by~\cite[Theorem~2.4(e)]{Tian_rank_formulas} is equivalent to $\pinv{B}\pinv{A}\in\minv{(AB)}{1,2,3}$.
\end{proof}

\begin{example}
Let $A=\begin{bmatrix}1 & 0 \\ 0 & 0\end{bmatrix}$
 be a matrix with given SVD decomposition $A=IAI$. The for 
 $B=\begin{bmatrix}1 & 1 & 0  \\ 0 & 1 & 1\end{bmatrix}$ we have $\rng{B}=\rng{V_A}=\bK^2$ but
 \[\pinv{(AB)}=\frac{1}{2}\left[\begin{array}{cc} 1 & 0\\ 1 & 0\\ 0 & 0 \end{array}\right]\neq \frac{1}{3}\left[\begin{array}{rr} 2 & 0\\ 1 & 0\\ -1 & 0 \end{array}\right]=\pinv{B}\pinv{A}.\]
 \end{example}

\section{Parametrization of SVDs}\label{sec:params_of_SVDs}
The unitary group $U(n)$ acts transitively and freely on the right on the set of all orthonormal bases of an $n$-dimensional subspace. In concrete terms.

\begin{lemma}
    Let $U,V\in\bK^{m\times n}$ be two matrices with orthonormal columns, that is $U^*U=V^*V=I$. Then
    \[\rng{U}=\rng{V}\Longleftrightarrow\text{there exists a unique $Q\in\bK^{n\times n}$ such that $Q^*Q=I$ and $V=UQ$}.\]
\end{lemma}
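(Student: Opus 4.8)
The plan is to prove both implications directly, relying on two elementary facts about a matrix $U$ with orthonormal columns: that $U^*U=I$ forces $U$ to have full column rank, and that $UU^*$ is the orthogonal projector onto $\rng{U}$, hence fixes every vector of $\rng{U}$. The reverse implication is the shorter one. Starting from $V=UQ$ with $Q^*Q=I$, I note that $Q$ is a square matrix satisfying $Q^*Q=I$ and is therefore invertible; multiplying $V=UQ$ on the right by $Q^{-1}$ shows that the columns of $U$ are also in $\rng{V}$, and trivially the columns of $V$ lie in $\rng{U}$, so $\rng{U}=\rng{V}$. No computation beyond invertibility of $Q$ is needed here.

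For the forward implication I would assume $\rng{U}=\rng{V}$ and propose the explicit candidate $Q=U^*V$. The key observation is that each column of $V$ lies in $\rng{V}=\rng{U}$, so $UU^*V=V$ because $UU^*$ acts as the identity on $\rng{U}$. This immediately gives $UQ=UU^*V=V$. To verify that $Q$ is unitary I would compute
\[Q^*Q=(U^*V)^*(U^*V)=V^*UU^*V=V^*V=I,\]
where the middle equality again uses $UU^*V=V$. Since $Q\in\bK^{n\times n}$ is square, the relation $Q^*Q=I$ is exactly the statement that $Q$ is unitary.

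For uniqueness, suppose $V=UQ_1=UQ_2$. Then $U(Q_1-Q_2)=0$, and left-multiplying by $U^*$ together with $U^*U=I$ yields $Q_1-Q_2=0$, so $Q_1=Q_2$. The argument is entirely elementary; the only step requiring any care is the identity $UU^*V=V$, which rests on the geometric fact that $UU^*$ is the orthogonal projection onto $\rng{U}$ combined with the hypothesis $\rng{V}\subseteq\rng{U}$. I do not anticipate any genuine obstacle beyond stating this cleanly.
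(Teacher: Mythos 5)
Your proof is correct and follows essentially the same route as the paper: both take $Q=U^*V$ and verify unitarity via the computation $Q^*Q=V^*UU^*V=V^*V=I$, using the fact that $UU^*$ is the orthogonal projection onto $\rng{U}=\rng{V}$. You are somewhat more thorough than the paper, which dismisses the reverse implication as obvious and leaves uniqueness implicit in the identity $Q=U^*V$, whereas you spell both out explicitly.
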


\begin{proof}
    $(\Longrightarrow)$ there exists an invertible matrix $Q\in\bK^{n\times n}$ such that $V=UQ$ and therefore $Q=U^*V$
    \[Q^*Q=V^*UU^*V=V^*\brac*{UU^*}V=V^*V=I,\]
    as $UU^*$ is an orthogonal projection onto $\rng{V}=\rng{U}$.\\
    $(\Longleftarrow)$ obvious.
\end{proof}

Contrary to popular fallacy, the singular value decomposition (SVD) of matrix $A$ is not unique, even if $A$ is generic. A careful analysis of the proof shows that there are many choices to be made. The well--known proof of SVD is included below for an easy reference only.

\begin{theorem}\label{thm:construction_of_SVD}
    For any matrix $A\in\bK^{m\times n}$ there exist unitary matrices $U\in\bK^{m\times m},V\in\bK^{n\times n}$ and a generalized diagonal matrix $\Sigma\in\bK^{m\times n}=\diag(\sigma_1,\ldots,\sigma_r,0,\ldots,0)$ with $\sigma_1\ge \sigma_2\ge \ldots\ge \sigma_r>0$ such that
    \[A=U\Sigma V^*.\]
\end{theorem}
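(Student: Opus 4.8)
The plan is to reduce the statement to the spectral theorem for Hermitian matrices, constructing the right singular vectors as an orthonormal eigenbasis of $A^*A$ (this is exactly the description of the $v_{A,i}$ as eigenvectors of $A^*A$ used elsewhere in the text). First I would observe that $A^*A\in\bK^{n\times n}$ is Hermitian, since $\brac*{A^*A}^*=A^*A$, and positive semidefinite, since $v^*A^*Av=\lVert Av\rVert^2\ge 0$ for every $v\in\bK^n$. By the spectral theorem there exists a unitary matrix $V=\bracs*{v_1\mid\cdots\mid v_n}\in\bK^{n\times n}$ whose columns are orthonormal eigenvectors of $A^*A$, with eigenvalues that are real and, by positive semidefiniteness, nonnegative; after reordering the columns I may assume $\lambda_1\ge\cdots\ge\lambda_r>0=\lambda_{r+1}=\cdots=\lambda_n$, where $r$ is the number of strictly positive eigenvalues.

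Next I would set $\sigma_i=\sqrt{\lambda_i}$ for $i=1,\ldots,r$ and define the candidate left singular vectors by $u_i=\sigma_i^{-1}Av_i$. A short computation gives $u_i^*u_j=\sigma_i^{-1}\sigma_j^{-1}v_i^*A^*Av_j=\sigma_i^{-1}\sigma_j^{-1}\lambda_j\,v_i^*v_j=\delta_{ij}$, so $u_1,\ldots,u_r$ are orthonormal in $\bK^m$. For the remaining indices, $\lambda_i=0$ forces $\lVert Av_i\rVert^2=v_i^*A^*Av_i=0$, hence $Av_i=0$; in particular $v_{r+1},\ldots,v_n$ span $\nullsp{A}$ and $r=\operatorname{rank}A$. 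I would then extend $u_1,\ldots,u_r$ to a full orthonormal basis $u_1,\ldots,u_m$ of $\bK^m$ (for instance by Gram--Schmidt applied to a complementary set of vectors) and collect these as the columns of a unitary matrix $U$.

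Finally, with $\Sigma=\diag(\sigma_1,\ldots,\sigma_r,0,\ldots,0)\in\bK^{m\times n}$, I would verify $A=U\Sigma V^*$ by testing both sides against the basis $v_1,\ldots,v_n$: since $V^*v_j=e_j$, for $j\le r$ one has $U\Sigma V^*v_j=U\Sigma e_j=\sigma_j u_j=Av_j$, while for $j>r$ both sides annihilate $v_j$. As the $v_j$ form a basis of $\bK^n$, the two matrices agree on all of $\bK^n$ and hence coincide.

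The only nontrivial ingredient, and thus the hard part, is the spectral theorem invoked in the first step, which over $\bC$ ultimately rests on the existence of eigenvalues (the fundamental theorem of algebra); everything after it is orthonormality bookkeeping. A self-contained alternative would replace that step by a variational argument, choosing $v_1$ to maximize $\lVert Av\rVert$ on the unit sphere, setting $\sigma_1=\lVert Av_1\rVert$ and $u_1=\sigma_1^{-1}Av_1$, and then recursing on the orthogonal complement; in that route the obstacle shifts to justifying existence of the maximizer by compactness and checking that the optimal direction decouples the problem into the remaining block.
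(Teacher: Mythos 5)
Your proposal is correct and follows essentially the same route as the paper: apply the spectral theorem to the Hermitian positive semidefinite matrix $A^*A$, take the eigenvectors as the columns of $V$, set $u_i=\sigma_i^{-1}Av_i$ for the nonzero eigenvalues, and extend to an orthonormal basis of $\bK^m$. In fact you make explicit two points the paper leaves implicit, namely that $Av_i=0$ when $\lambda_i=0$ and the final verification of $A=U\Sigma V^*$ by testing both sides on the basis $v_1,\ldots,v_n$; the only difference is that the paper organizes the eigenvectors by distinct eigenspaces $V_{(\lambda_i)}$, which it needs later to parametrize all SVDs, but this is immaterial for the existence statement itself.
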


\begin{proof}
     Matrix $A^* A\in \bK^{n\times n}$ is Hermitian hence diagonalizable by the Spectral Theorem. It is also positive semidefinite with real positive eigenvalues $\lambda_1 > \lambda_2 > \ldots >\lambda_p> 0$ and possibly an eigenvalue equal to $0$. Eigenspaces $V_{(\lambda_i)}=\{v\in\bK^n\mid A^*Av=\lambda_iv\}$ are pairwise orthogonal and give the orthogonal direct sum decomposition
     \[\bK^n=V_{(\lambda_1)}\oplus\ldots\oplus V_{(\lambda_{p})}\oplus V_{(0)},\]
     where  $\dim V_{(0)}=n-r$.
     Let $v_1^{(i)},\ldots,v_{m_i}^{(i)}\in\bK^n$ be an orthonormal basis of $V_{(i)}$ where $m_i=\dim V_{(i)}, i=1,\ldots,p$. For fixed i, orthonormal bases of $V_{(i)}$ are parametrized by points of $O(m_i,\bK)$.
     Set $n_1=0$ and $n_i=m_1+\ldots+m_{i-1}$ for $i\ge 2$ and let
     \[v_{n_i+j}=v_j^{(i)},\quad \sigma_{n_i+j}=\sqrt{\lambda_i},\]
     for $i=1,\ldots,p$ and $j=1,\ldots,m_i$. Finally $v_1,\ldots,v_r$ is an orthonormal basis of $V_{(\lambda_1)}\oplus\ldots\oplus V_{(\lambda_{p})}$. Let
     \[u_i=\frac{1}{\sigma_i}Av_i\in\bK^m,\]
     for $i=1,\ldots,r$.
Then
\[u_i^* u_j=\frac{1}{\sigma_i\sigma_j}v_i^* A^* Av_j=\frac{\lambda_j}{\sigma_i\sigma_j}v_i^*v_j=\left\{
  \begin{matrix}
    0 & i\neq j,\\
    1 & i=j,
  \end{matrix}
  \right. ,\quad\text{for}\quad i,j=1,\ldots,r.\]
  Let $v_{r+1},\ldots,v_n\in\bK^n$ be any orthonormal basis of $V_{(0)}$
  and let $u_{r+1},\ldots,u_m\in\bK^m$ be any orthonormal basis of the orthogonal complement of $\linsp(u_1,\ldots,u_r)$. Those choices are given by the point in $U(n-r,\bK)$ and in $U(m-r,\bK)$, respectively. Finally, let $U\in\bK^{m\times m}$ be an orthogonal matrix with columns $u_i$, let $V\in\bK^{n\times n}$ be an orthogonal matrix with columns $v_i$, and let $\Sigma=\diag(\sigma_1,\ldots,\sigma_r,0,\ldots,0)\in\bK^{m\times r}$. Then, by construction, the matrices $U,V$ are unitary and
  \[A=U\Sigma V^*.\]
  
\end{proof}

\begin{corollary}
    If
    \[A=U\Sigma V^*=\widebar{U}\widebar{\Sigma} \widebar{V}^*,\]
    are two SVD decompositions then $\Sigma=\widebar{\Sigma}$ and there exist matrices 
    \[Q_i\in U(m_i,\bK)\quad\text{for}\quad i=1,\ldots,p,\quad Q_{\cN}\in U(n-r,\bK),\quad Q_{\cN^*}\in U(m-r,\bK),\]
    such that
    \[V=\widebar{V}\diag(Q_1,\ldots,Q_p,Q_{\cN}),\]
    \[U_{:,1:r}=AV_{:,1:r}\left(\diag(\sigma_1,\ldots,\sigma_r)\right)^{-1},\quad 
    \widebar{U}_{:,1:r}=A\widebar{V}_{:,1:r}\left(\diag(\sigma_1,\ldots,\sigma_r)\right)^{-1},\]
    \[U_{:,(r+1):n}=\widebar{U}_{:,(r+1):n}Q_{\cN^*}.\]
    where $\diag$ denotes a block diagonal matrix.
\end{corollary}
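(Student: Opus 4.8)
The plan is to extract everything from the two identities $A^*A = V\Sigma^*\Sigma V^* = \widebar{V}\widebar{\Sigma}^*\widebar{\Sigma}\widebar{V}^*$ and $AV = U\Sigma$, together with the orthonormal--basis lemma proved above. First I would settle $\Sigma = \widebar{\Sigma}$. Both $V\Sigma^*\Sigma V^*$ and $\widebar{V}\widebar{\Sigma}^*\widebar{\Sigma}\widebar{V}^*$ are spectral decompositions of the Hermitian matrix $A^*A$, whose distinct nonzero eigenvalues together with their multiplicities $m_1,\ldots,m_p$ are invariants of $A^*A$. Since the diagonals of $\Sigma^*\Sigma$ and $\widebar{\Sigma}^*\widebar{\Sigma}$ list exactly these eigenvalues (and the trailing zeros) in the same decreasing order, the two generalized diagonal matrices coincide, so $\Sigma = \widebar{\Sigma}$ and $\sigma_i = \widebar{\sigma}_i$ for every $i$. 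In particular, the block of columns of $V$ (resp. $\widebar{V}$) indexed by a fixed eigenvalue $\lambda_i$ is an orthonormal basis of the same eigenspace $V_{(\lambda_i)}$, and the last $n-r$ columns of each span $V_{(0)} = \nullsp{A^*A} = \nullsp{A}$.

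Next I would apply the orthonormal--basis lemma block by block. For each $i = 1,\ldots,p$ the corresponding $m_i$ columns of $V$ and of $\widebar{V}$ are two orthonormal bases of $V_{(\lambda_i)}$, so the lemma yields a unique $Q_i \in U(m_i,\bK)$ carrying one block to the other; the same argument on the null--space block produces a unique $Q_{\cN}\in U(n-r,\bK)$. Assembling these blocks along the diagonal gives $V = \widebar{V}\diag(Q_1,\ldots,Q_p,Q_{\cN})$, as asserted.

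For the left singular vectors I would use that $A = U\Sigma V^*$ forces $AV = U\Sigma$, i.e. $Av_i = \sigma_i u_i$ for $i = 1,\ldots,r$; since $\sigma_i > 0$ this pins down $u_i = \sigma_i^{-1}Av_i$ with no freedom, which in matrix form is precisely $U_{:,1:r} = AV_{:,1:r}\brac*{\diag(\sigma_1,\ldots,\sigma_r)}^{-1}$, and identically for $\widebar{U}$. The remaining $m-r$ columns of $U$ and of $\widebar{U}$ are orthonormal bases of the orthogonal complement of $\linsp(u_1,\ldots,u_r) = \rng{A}$, so a final application of the lemma supplies a unique $Q_{\cN^*}\in U(m-r,\bK)$ with $U_{:,(r+1):m} = \widebar{U}_{:,(r+1):m}Q_{\cN^*}$, completing the parametrization.

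The step demanding the most care is the bookkeeping in the first part: one must check that the \emph{grouping} of columns by repeated singular value is forced to agree between the two decompositions, not merely the multiset of singular values, so that the block structure of $\diag(Q_1,\ldots,Q_p)$ really matches the multiplicities $m_i$. A useful accompanying observation is that, because $\sigma$ is constant on each eigenvalue block, $\diag(\sigma_1,\ldots,\sigma_r)$ restricts to a scalar on that block and hence commutes with $Q_i$; this is what guarantees that the change in $U_{:,1:r}$ induced through $u_i = \sigma_i^{-1}Av_i$ stays consistent with the block--diagonal $Q_i$ used for $V$, and it explains why the total freedom splits cleanly into the eigenspace blocks ($Q_i$), the right null space ($Q_{\cN}$), and the left null space ($Q_{\cN^*}$) with no cross terms.
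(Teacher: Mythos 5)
Your proof is correct and follows essentially the same route as the paper: read off $\Sigma=\widebar{\Sigma}$ from the spectral decomposition $A^*A=V\Sigma^*\Sigma V^*=\widebar{V}\widebar{\Sigma}^*\widebar{\Sigma}\widebar{V}^*$, apply the orthonormal-basis lemma eigenspace by eigenspace (and to $\nullsp{A}$, $\nullsp{A^*}$) to produce the unitaries $Q_i$, $Q_{\cN}$, $Q_{\cN^*}$, and note that $AV=U\Sigma$ forces $u_i=\sigma_i^{-1}Av_i$ for $i\le r$. Your added bookkeeping (the forced agreement of the eigenvalue blocks and the commutation of $Q_i$ with the scalar block of $\diag(\sigma_1,\ldots,\sigma_r)$) only makes explicit what the paper leaves implicit.
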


\begin{proof}
  Under the assumptions
  \[A^*A=V\diag(\sigma_1^2,\ldots,\sigma_r^2,0,\ldots,0)V^*=\widebar{V}\diag(\widebar{\sigma}_1^2,\ldots,\widebar{\sigma}_r^2,0,\ldots,0)\widebar{V}^*.\]
  By the uniqueness of the eigenvalues of $A^*A$ and positivity of $\sigma_i$'s
  \[\sigma_i=\widebar{\sigma}_i\quad\text{for}\quad i=1,\ldots,r.\]
  Therefore, orthonormal bases of the eigenspaces $V_{(\lambda_i)}$ differ by a unitary change-of-coordinates matrix $Q_i$ for $i=1,\ldots,p$. Similarly, $v_{r+1},\ldots,v_n$ and $\widebar{v}_{r+1},\ldots,\widebar{v}_n$ form an orthogonal basis of $V_{(0)}=\nullsp{A}$.
  By construction, the vectors $u_i$ and $\widehat{u_i}$ for $i=1,\ldots,r$ are uniquely determined by the vectors $v_i$ and $\widehat{v_i}$, and for $i>r$ vectors $u_i$ and $\widehat{u_i}$ span $\nullsp{A^*}$.
\end{proof}

\begin{corollary}
    SVDs of matrix $A$ are parametrized by the $\bK$-points of the group
    \[U(m_1,\bK)\times \ldots\times U(m_p,\bK)\times U(n-r,\bK)\times U(m-r,\bK).\]
\end{corollary}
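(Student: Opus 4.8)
The plan is to realize the product group
\[G=U(m_1,\bK)\times\ldots\times U(m_p,\bK)\times U(n-r,\bK)\times U(m-r,\bK)\]
as acting freely and transitively on the set $\mathcal{S}$ of all SVD decompositions of $A$, so that fixing one base decomposition $A=U\Sigma V^*$ and sending a group element to the SVD it produces yields the claimed bijection $G\xrightarrow{\sim}\mathcal{S}$. The previous corollary already supplies transitivity: it shows that $\Sigma$ is common to every SVD and that any other SVD $A=\widebar{U}\,\Sigma\,\widebar{V}^*$ arises from the fixed one through some tuple $(Q_1,\ldots,Q_p,Q_{\cN},Q_{\cN^*})\in G$ via $\widebar{V}=V\diag(Q_1,\ldots,Q_p,Q_{\cN})$, with the first $r$ columns of $\widebar{U}$ determined by $\widebar{V}$ and the last columns related by $Q_{\cN^*}$.

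First I would define the action explicitly. Given $g=(Q_1,\ldots,Q_p,Q_{\cN},Q_{\cN^*})\in G$ set $\widebar{V}=V\diag(Q_1,\ldots,Q_p,Q_{\cN})$, put $\widebar{u}_i=\tfrac{1}{\sigma_i}A\widebar{v}_i$ for $i=1,\ldots,r$, let $\widebar{U}_{:,(r+1):m}=U_{:,(r+1):m}Q_{\cN^*}$, and declare $g\cdot(U,\Sigma,V)=(\widebar{U},\Sigma,\widebar{V})$. The key verification is that this really lands in $\mathcal{S}$, i.e. that $\widebar{U},\widebar{V}$ are unitary and $A=\widebar{U}\Sigma\widebar{V}^*$. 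Here the block-diagonal shape of $\diag(Q_1,\ldots,Q_p,Q_{\cN})$ is exactly what keeps each $\widebar{v}_i$ inside a single eigenspace $V_{(\lambda)}$ of $A^*A$: then $A\widebar{v}_i=\sum_j(Q)_{ji}\sigma_j u_j$ with every $\sigma_j$ on that block equal to $\sigma_i$, so $\widebar{U}_{:,1:r}=U_{:,1:r}\diag(Q_1,\ldots,Q_p)$ is again unitary and $A\widebar{v}_i=\sigma_i\widebar{u}_i$, which together with the separate treatment of the kernel columns gives $A=\widebar{U}\Sigma\widebar{V}^*$.

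Freeness is then short: if $g$ fixes $(U,\Sigma,V)$, then $\widebar{V}=V$ forces $\diag(Q_1,\ldots,Q_p,Q_{\cN})=I$, so each $Q_i=I$ and $Q_{\cN}=I$ by the uniqueness in the orthonormal-basis lemma, while $\widebar{U}_{:,(r+1):m}=U_{:,(r+1):m}$ forces $Q_{\cN^*}=I$; hence the stabilizer is trivial. Equivalently, the factors $Q_i,Q_{\cN},Q_{\cN^*}$ can be read back off from $\widebar{V}$ and $\widebar{U}_{:,(r+1):m}$, so the orbit map $g\mapsto g\cdot(U,\Sigma,V)$ is injective; combined with transitivity from the previous corollary it is a bijection $G\xrightarrow{\sim}\mathcal{S}$, which is the asserted parametrization.

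I expect the main obstacle to be the well-definedness step — pinning down precisely why mixing only \emph{within} a single eigenspace of $A^*A$ (the product-over-$i$ block structure, rather than the full unitary group) is exactly what preserves $A=\widebar{U}\Sigma\widebar{V}^*$, and handling the first $r$ columns of $\widebar{U}$ with care, since these are not free parameters but are forced by $\widebar{V}$ through $\widebar{u}_i=\tfrac{1}{\sigma_i}A\widebar{v}_i$. A secondary point to confirm is that the factors $Q_{\cN}$ and $Q_{\cN^*}$, acting respectively on $\nullsp{A}$ and on the orthogonal complement of $\rng{A}$, contribute independent freedom and genuinely produce distinct SVDs, which is where the separate indexing of the last $n-r$ and $m-r$ columns is needed.
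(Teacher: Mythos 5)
Your proposal is correct and follows essentially the same route as the paper, which states this corollary without a separate proof precisely because it is the combination you spell out: surjectivity comes from the preceding corollary (any two SVDs differ by such a tuple $(Q_1,\ldots,Q_p,Q_{\cN},Q_{\cN^*})$), and the freedom of choice is exactly the one flagged inside the proof of Theorem~\ref{thm:construction_of_SVD} (orthonormal bases of each eigenspace $V_{(\lambda_i)}$, of $\nullsp{A}$, and of the complement of $\rng{A}$). Your added verifications --- that block-diagonal mixing preserves $A=\widebar{U}\Sigma\widebar{V}^*$ because all singular values within a block coincide, and that the tuple can be recovered via $V^*\widebar{V}$ and $\bigl(U_{:,(r+1):m}\bigr)^*\widebar{U}_{:,(r+1):m}$, giving injectivity --- are exactly the details the paper leaves implicit.
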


\begin{remark}\label{rem:bases_of_im_ker}
    Vectors $v_1,\ldots,v_r$ (i.e., the right singular vectors) form an orthonormal  basis of $\rng{A^*}$, vectors $v_{r+1},\ldots,v_n$ form an orthonormal  basis of $\nullsp{A}$. Vectors $u_1,\ldots,u_r$ (i.e., the left singular vectors) form an orthonormal  basis of $\rng{A}$, vectors $u_{r+1},\ldots,u_m$ form an orthonormal  basis of $\nullsp{A^*}$. This follows from the following facts
    \[\rng{A^*A}=\rng{A^*},\quad \nullsp{A^*A}=\nullsp{A},\]
    \[\rng{AA^*}=\rng{A},\quad \nullsp{AA^*}=\nullsp{A^*},\]
    \[A^*A=V\Sigma^*\Sigma V^*,\quad AA^*=U\Sigma\Sigma^* U^*.\]
\end{remark}

\begin{remark}
There is an alternative construction of the SVD decomposition which starts from $AA^*$ instead of $A^*A$. It can be seen as the SVD decomposition of $A^*$. 
\end{remark}

\section{Necessary Condition}\label{sec:necessary}
This part uses standard results from linear algebra concerning an endomorphism of a finite--dimensional vector space over $\bK$ with an inner product, applied to $\bK^n$ with the standard scalar product. In this setting, a self--adjoint operator corresponds to a Hermitian matrix.

\begin{lemma}[Spectral Theorem]
Hermitian matrix is diagonalizable in an orthonormal basis with real eigenvalues. The eigenspaces of a Hermitian matrix corresponding to different eigenvalues are orthogonal. The orthogonal complement of an invariant subspace of a Hermitian matrix is also invariant. Restriction of a diagonalizable endomorphism to an invariant subspace is diagonalizable. 
\end{lemma}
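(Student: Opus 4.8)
The plan is to verify the four assertions in turn, relying only on the defining identity $A^*=A$ and the standard Hermitian inner product $\scalar{u}{w}$ on $\bK^n$, for which $\scalar{Au}{w}=\scalar{u}{A^*w}$. First I would dispose of the two assertions that are pure one-line adjoint computations. For reality of eigenvalues, if $Av=\lambda v$ with $v\neq 0$, then $\lambda\scalar{v}{v}=\scalar{Av}{v}=\scalar{v}{A^*v}=\scalar{v}{Av}=\overline{\lambda}\,\scalar{v}{v}$, so $\lambda=\overline{\lambda}$. For orthogonality of eigenspaces, if $Av=\lambda v$ and $Aw=\mu w$ with $\lambda\neq\mu$ (both real by the previous step), then $\lambda\scalar{v}{w}=\scalar{Av}{w}=\scalar{v}{A^*w}=\scalar{v}{Aw}=\mu\scalar{v}{w}$, whence $\scalar{v}{w}=0$, and eigenspaces attached to distinct eigenvalues are therefore orthogonal.

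Next I would establish invariance of the orthogonal complement, since this is the inductive engine for the diagonalization. If $U$ is $A$-invariant, i.e. $AU\subset U$, and $w\in U^\perp$, then for every $u\in U$ one has $\scalar{Aw}{u}=\scalar{w}{A^*u}=\scalar{w}{Au}=0$, because $Au\in U$; hence $Aw\in U^\perp$, so $U^\perp$ is $A$-invariant as well. With these facts in hand, the existence of an orthonormal eigenbasis follows by induction on $n$. The base case $n=1$ is trivial. For the inductive step one picks a unit eigenvector $v_1$, sets $U=\linsp(v_1)$ (which is $A$-invariant), observes that $U^\perp$ is $A$-invariant by the paragraph above and that the restriction of $A$ to $U^\perp$ is again Hermitian for the induced inner product, applies the inductive hypothesis to obtain an orthonormal eigenbasis of $U^\perp$, and adjoins $v_1$. \textbf{The main obstacle is the very first input to this induction:} the existence of at least one eigenvector. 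Over $\bC$ this is immediate from the fundamental theorem of algebra applied to the characteristic polynomial; over $\bR$ it requires the observation that a real symmetric matrix, viewed as a complex Hermitian one, has only real eigenvalues (by the reality argument above), so its characteristic polynomial splits over $\bR$ and a real eigenvector exists. I would flag this as the only point needing genuine care.

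Finally, for the fourth assertion, which is a general statement about any diagonalizable endomorphism $T$ on a finite-dimensional space and an invariant subspace $W$, I would argue through minimal polynomials rather than repeat the spectral machinery. Recall that $T$ is diagonalizable precisely when its minimal polynomial is a product of distinct linear factors. The minimal polynomial of the restriction $T|_W$ divides that of $T$, since $m(T)=0$ forces $m(T|_W)=0$; and any monic divisor of a product of distinct linear factors is itself a product of distinct linear factors. Hence $T|_W$ has squarefree, linearly split minimal polynomial and is therefore diagonalizable. This closes the lemma, and the only nontrivial ingredient across the whole argument remains the existence of an initial eigenvector in the real case.
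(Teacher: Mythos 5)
Your proof is correct, and on the one substantive point the paper actually argues --- the fourth claim --- it coincides exactly with the paper's reasoning: both you and the author use that an endomorphism is diagonalizable if and only if its minimal polynomial has simple roots (splits into distinct linear factors), that the minimal polynomial of a restriction to an invariant subspace divides the minimal polynomial of the whole endomorphism, and that a divisor of such a polynomial is again of that form. Where you differ is in scope: the paper disposes of diagonalizability and orthogonality of eigenspaces by citing Lang (Chapter~XV, \S 6--\S 7) and dismisses invariance of the orthogonal complement as an easy exercise, whereas you prove everything from first principles --- the adjoint computations for reality of eigenvalues, orthogonality of eigenspaces, and invariance of $U^\perp$, followed by induction on dimension to produce the orthonormal eigenbasis. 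Your flag about the real case is well placed: the existence of an initial eigenvector is the one step that genuinely requires passing through $\bC$, since reality of the eigenvalues of a real symmetric matrix viewed as a complex Hermitian one forces its characteristic polynomial to split over $\bR$, after which $A-\lambda I$ is a singular real matrix and has a real kernel vector. What the paper's citation buys is brevity; what your version buys is self-containedness, and it makes visible that the orthogonal-complement statement (claim three) is precisely the inductive engine behind diagonalizability (claim one) --- a dependence the paper's wording leaves implicit, even though the proof of Theorem~\ref{thm:necessary} relies on exactly this chain of facts.
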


\begin{proof}
 A proof of the Spectral Theorem can be found in ~\cite[Chapter~XV, \S 6--\S 7]{LangAlgebra}. Invariance of the orthogonal complement is an easy exercise.
 The last claim follows as an endomorphism is diagonalizable if and only if its minimal polynomial is semi--simple (i.e., has simple roots), c.f.~\cite[Chapter~XIV, Ex.~13]{LangAlgebra}, and minimal polynomial of an endomorphism is divisible by a minimal polynomial of its restriction to an invariant subspace.
\end{proof}

\begin{theorem}\label{thm:necessary}
Let $A\in\bK^{m\times n},B\in\bK^{n\times k}$ be two non--zero matrices such that
 $\pinv{(AB)}=\pinv{B}\pinv{A}$. Then there exist SVD decompositions $A=U_A\Sigma_A V_A^*$ and $B=U_B\Sigma_B V_B^*$ such that  $\rng{B}$ is spanned by some columns of  matrix $V_A$, $\rng{A^*}$ is spanned by some columns of matrix $U_B$ and
\begin{equation}\label{eq:spaces}
\linsp \left(u_{B,i}\in\bK^n \mid u_{B,i}\notin\nullsp{A},\ i\in\{1,\ldots,r_B\}\right)=
\linsp \left(v_{A,i}\in\bK^n \mid v_{A,i}\notin\nullsp{B^*},\ i\in\{1,\ldots,r_A\}\right). 
\end{equation}

\end{theorem}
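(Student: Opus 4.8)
The plan is to read off, from Greville's conditions, the rigid geometry that the reverse order law forces on the two subspaces involved, and then to build both singular value decompositions by hand so that they are adapted to one common orthogonal splitting of $\bK^n$. Write $R=\rng{A^*}$ and $S=\rng{B}$, so that $\nullsp A=R^\perp$ and $\nullsp{B^*}=S^\perp$. First I would invoke Greville's Theorem~\ref{thm:greville_conds}: since $\pinv{(AB)}=\pinv B\pinv A$, we have $\rng{A^*AB}\subset\rng B$ and $\rng{BB^*A^*}\subset\rng{A^*}$. Because $\rng{A^*AB}=A^*A(S)$ and $\rng{BB^*A^*}=BB^*(R)$, these two inclusions say exactly that $S$ is an invariant subspace of the Hermitian matrix $A^*A$ and that $R$ is an invariant subspace of the Hermitian matrix $BB^*$.

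The central step is to promote this invariance to a single orthogonal decomposition
\[
\bK^n=(R\cap S)\oplus(R\cap S^\perp)\oplus(R^\perp\cap S)\oplus(R^\perp\cap S^\perp),
\]
in which every summand is simultaneously invariant under $A^*A$ and under $BB^*$. To obtain it I would apply the Spectral Theorem twice. Since $R$ is $BB^*$-invariant, so is $R^\perp$, and restricting $BB^*$ to each gives a diagonalizable operator; grouping its eigenvectors by eigenvalue and using $\rng{BB^*}=S$ together with $\nullsp{BB^*}=S^\perp$ yields the orthogonal splittings $R=(R\cap S)\oplus(R\cap S^\perp)$ and $R^\perp=(R^\perp\cap S)\oplus(R^\perp\cap S^\perp)$. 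The same argument applied to the $A^*A$-invariant subspaces $S$ and $S^\perp$ shows each of the four pieces is also $A^*A$-invariant, so each may be diagonalized by either Hermitian operator. (This is where the ``principal angles in $\{0,\tfrac{\pi}{2}\}$'' picture of Table~\ref{tab:similar_equivalent} secretly enters, but I would need only the decomposition itself.)

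With this decomposition in hand I would construct the two SVDs separately, each time exploiting the freedom inside eigenspaces described in Section~\ref{sec:params_of_SVDs}. For $A$: as all four summands are $A^*A$-invariant, I can pick an orthonormal eigenbasis of $A^*A$ subordinate to the splitting, so that each $v_{A,i}$ lies in exactly one summand, with the vectors of $R=(R\cap S)\oplus(R\cap S^\perp)$ placed among the first $r_A$ columns and those of $R^\perp$ among the last $n-r_A$. Then $S=(R\cap S)\oplus(R^\perp\cap S)$ is the span of some columns of $V_A$, and among the first $r_A$ columns precisely those lying in $R\cap S$ avoid $S^\perp$, so the right-hand side of~\eqref{eq:spaces} equals $R\cap S$. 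Dually, for $B$ I would choose an orthonormal eigenbasis of $BB^*$ subordinate to the same splitting, with $S=\rng B$ spanned by the first $r_B$ columns of $U_B$; then $R$ is the span of some columns of $U_B$, and among the first $r_B$ columns precisely those in $R\cap S$ avoid $R^\perp=\nullsp A$, so the left-hand side of~\eqref{eq:spaces} also equals $R\cap S$. Comparing the two computations, both sides equal $\rng{A^*}\cap\rng{B}$ and~\eqref{eq:spaces} holds.

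The main obstacle I anticipate is justifying that a single orthonormal basis can be chosen to be at once an eigenbasis of the relevant Hermitian matrix and subordinate to the four-way splitting; this rests on the fact, provable from the Spectral Theorem, that each intersection $R^{\pm}\cap S^{\pm}$ is invariant, so one may diagonalize the Hermitian operator separately on each summand and concatenate the resulting bases. Everything else is bookkeeping: tracking which columns fall into $R\cap S$ versus the remaining pieces, and noting that ``spanned by some columns'' imposes no contiguity and so is satisfied by an arbitrary index subset.
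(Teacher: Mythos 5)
Your proof is correct, and its skeleton is the same as the paper's: Greville's conditions (Theorem~\ref{thm:greville_conds}) give that $\rng{B}$ is $A^*A$-invariant and $\rng{A^*}$ is $BB^*$-invariant, the Spectral Theorem applied to restrictions on invariant subspaces produces adapted orthonormal eigenbases, and the two SVDs are assembled from those eigenbases exactly as in Theorem~\ref{thm:construction_of_SVD}. The genuine difference is the organization of the final step. Writing $R=\rng{A^*}$ and $S=\rng{B}$ as you do, the paper keeps two separate two-way splittings---an eigenbasis of $A^*A$ subordinate to $S\oplus\nullsp{B^*}$ and an eigenbasis of $BB^*$ subordinate to $R\oplus\nullsp{A}$---and then proves the equality~(\ref{eq:spaces}) by a double inclusion, expanding each $u_{B,i}\notin\nullsp{A}$ in the eigenbasis of $\rng{B}$ and discarding the components lying in $\nullsp{A}$ by orthogonality. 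You instead refine both splittings into the single four-way decomposition $(R\cap S)\oplus(R\cap S^\perp)\oplus(R^\perp\cap S)\oplus(R^\perp\cap S^\perp)$, every summand of which is invariant under both Hermitian matrices; your justification of this (eigenvectors of $BB^*$ with nonzero eigenvalue lie in $\rng{BB^*}=S$, those with zero eigenvalue in $\nullsp{BB^*}=S^\perp$, and symmetrically for $A^*A$) is sound, and the four-way splitting survives reordering of basis vectors needed to sort singular values. What this buys is a cleaner endgame: both sides of~(\ref{eq:spaces}) are read off directly as $R\cap S$ with no coefficient expansion, and you get as a by-product that the common span equals $\rng{A^*}\cap\rng{B}$, a fact the paper only records afterwards in Remark~\ref{rem:range_of_Grevilles_spaces}.
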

\begin{proof}
    Let $r_A=\rank A,r_B=\rank B$. The $i$-th column of the matrix $U_A$ will be denoted by $u_{A,i}$, etc. Equation $\pinv{(AB)}=\pinv{B}\pinv{A}$ holds if and only if both Greville's conditions are satisfied, cf.~\cite[Ex.~22,p.~160]{BenGreville}.
    The first Greville's condition
    \[\rng{A^*AB}\subset \rng{B}\]
    implies that $\rng{B}$ is an invariant subspace of Hermitian matrix $A^*A$. Since $\bK^n=\rng{B}\oplus \nullsp{B^*}$ and any invariant subspace of a Hermitian matrix has an invariant orthogonal complement, it follows that $\nullsp{B^*}$ is also invariant for $A^*A$. The restriction of a diagonalizable operator to an invariant subspace is diagonalizable, therefore $A^*A$ diagonalizes  when restricted to $\rng{B}$ and to $\nullsp{B^*}$. Therefore, there exists an orthonormal eigenbasis $w_{B,1},\ldots,w_{B,r_B}$ of $\rng{B}$ and an orthonormal eigenbasis $z_{B,1},\ldots,z_{B,k-r_B}$ of $\nullsp{B^*}$ for matrix $A^*A$. The eigenvectors in both bases, corresponding to the non--zero eigenvalues of $A^*A$ can be taken as the right singular vectors for $A$, i.e, $v_{A,1},\ldots,v_{A,r_A}$.
    That is,
    \[\{v_{A,1},\ldots,v_{A,r_A}\}=\{w_{B,j}\in\rng{B}\mid A^*Aw_{B,j}\neq 0\}\cup 
    \{z_{B,j}\in\nullsp{B^*}\mid A^*Az_{B,j}\neq 0\}.\]
    By construction, vectors $v_{A,1},\ldots,v_{A,r_A}$ form an orthonormal basis of $\rng{A^*}$, and 
    \[v_{A,i}\in\rng{B}\quad\text{or}\quad v_{A,i}\in\nullsp{B^*},\]
    for $i=1,\ldots,r_A$. The left singular vectors of $A$, i.e. $u_{A,1},\ldots,u_{A,r_A}$, an orthonormal basis of $\rng{A}$, are uniquely determined by the right ones, cf. Theorem~\ref{thm:construction_of_SVD}. Vectors corresponding to $\nullsp{A}$, that is, $v_{A,r_A+1},\ldots,v_{A,n}$, and $\nullsp{A^*}$, that is $u_{A,r_A+1},\ldots,u_{A,m}$, can be chosen arbitrarily (in the case of $\nullsp{A}$, for example, by choosing eigenvectors of $A^*A$ corresponding to zero eigenvalues, that is $w_{B,i}$'s and $z_{B,i}$'s such that, respectively, $A^*Aw_{B,i}=0$ and $A^*Az_{B,i}=0$, cf. Theorem~\ref{thm:construction_of_SVD}).

    By an analogous construction applied to the second Greville's condition
    \[\rng{BB^*A^*}\subset \rng{A^*},\]
    there exists an orthonormal eigenbases of $\rng{A^*}$ and of $\nullsp{A}$ for the matrix $BB^*$. Eigenvectors in both bases, corresponding to the non--zero eigenvalues of $BB^*$ can be taken as the right singular vectors for $B^*$, or equivalently, the left singular vectors of $B$, that is $u_{B,1},\ldots,u_{B,r_B}$. By construction, vectors $u_{B,1},\ldots,u_{B,r_B}$ form an orthonormal basis of $\rng{B}$, and 
    \[u_{B,i}\in\rng{A^*}\quad\text{or}\quad u_{B,i}\in\nullsp{A},\]
    for $i=1,\ldots,r_B$.

    Consider 
    \[\linsp \left(u_{B,i}\in\bK^n \mid u_{B,i}\notin\nullsp{A},\ i\in\{1,\ldots,r_B\}\right).\]
    For any $u_{B,i}\in\rng{B}$ such that $u_{B,i}\notin \nullsp{A}$ we have $u_{B,i}\in\rng{A^*}$ and there exist $\alpha_{ij}\in\bK$ such that
    \[u_{B,i}=\sum_{j=1}^{r_B}\alpha_{ij}w_{B,j},\]
    since $w_{B,1},\ldots,w_{B_j}$ is a basis of $\rng{B}$. By construction, either $w_{B,j}\in\nullsp{A}$ or $w_{B,j}\in\rng{A^*}$.
    Since $u_{B,i}\in\rng{A^*}$, by orthogonality
    \[u_{B,i}=\sum_{\substack{j\in\{1,\ldots,r_B\}\text{ such that}\\w_{B,j}\in\rng{A^*}}}\alpha_{ij}w_{B,j}\in
    \linsp \left(v_{A,i}\in\bK^n \mid v_{A,i}\notin\nullsp{B^*},\ i\in\{1,\ldots,r_A\}\right)
    ,\]
    as $w_{B,j}$'s in $\rng{A^*}$ are exactly those $v_{A,i}$'s in $\rng{B}$, orthogonal to $\nullsp{B^*}$.

    The reverse inclusion can be proven in a similar way.
\end{proof}

\begin{remark}\label{rem:range_of_Grevilles_spaces}
    If $\pinv{\brac*{AB}}=\pinv{B}\pinv{A}$ then the space spanned simultaneously by some right singular vectors of $A$ and left singular vectors of $B$ is equal to
    \[\rng{A^*AB}=\rng{BB^*A}=\rng{A^*}\cap\rng{B}.\]
    By definition, it is equal to the image of $A^*A$ restricted to $\rng{B}$ and the image of $BB^*$ restricted to $\rng{A^*}$. Moreover if $v\in\rng{A^*}\cap\rng{B}$ then there exist $\alpha_i,\beta_i\in\bK$ such that
    \[v=\sum_{i=1}^{r_A}\alpha_i v_{A,i}=\sum_{i=1}^{r_B}\beta_i u_{B,i},\]
    but coefficients at all components either in $\nullsp{A}$ or $\nullsp{B^*}$ vanish.
\end{remark}

\begin{corollary}\label{cor:ROL_AB_implies_possible_ROLs}
    Assume that $\pinv{\brac*{AB}}=\pinv{B}\pinv{A}$. Let 
    \[J_A=\{i\in\{1,\ldots,r_A\}\mid v_{A,i}\notin\nullsp{B^*}\},\]
    \[J_B=\{i\in\{1,\ldots,r_B\}\mid u_{B,i}\notin\nullsp{A}\}.\]
    To simplify the notation let
    \[U_{J,A}=\brac*{U_A}_{J_A},\quad V_{J,A}=\brac*{V_A}_{J_A},\]
    \[U_{J,B}=\brac*{U_B}_{J_B},\quad V_{J,B}=\brac*{V_B}_{J_B},\]
    \[\widetilde{\Sigma}_A=\brac*{\Sigma_A}_{(J_A,J_A)},\quad \widetilde{\Sigma}_B=\brac*{\Sigma_B}_{(J_B,J_B)}.\]
    Then
    \[\rng{AB}=\rng{U_{J_A}},\quad \rng{B^*A^*}=\rng{V_{J_B}},\]
    and
    \[\pinv{\brac*{AB}}=\brac*{V_{J_B}\widetilde{\Sigma}_B^{-1}U_{J_B}^*}
    \brac*{V_{J_A}\widetilde{\Sigma}_A^{-1}U_{J_A}^*}.\]
    Moreover, the reverse order law holds for the following cases
    \[\pinv{\brac*{\brac*{AB}B^*}}=\pinv{\brac*{B^*}}\pinv{\brac*{AB}},\quad \pinv{\brac*{\brac*{AB}\pinv{B}}}=B\pinv{\brac*{AB}},\]
    \[\pinv{\brac*{A^*\brac*{AB}}}=\pinv{\brac*{AB}}\pinv{\brac*{A^*}},\quad \pinv{\brac*{\pinv{A}\brac*{AB}}}=\pinv{\brac*{AB}}A,\]
     \[\pinv{\brac*{\brac*{A^*A}B}}=\pinv{B}\pinv{\brac*{A^*A}},\quad 
    \pinv{\brac*{A\brac*{B B^*}}}=\pinv{\brac*{BB^*}}\pinv{A},\]
    \[\pinv{\brac*{\brac*{\pinv{A}A}B}}=\pinv{B}\brac*{\pinv{A}A},\quad 
    \pinv{\brac*{A\brac*{B\pinv{B}}}}=\brac*{B\pinv{B}}\pinv{A}.\]
    and
    \[\pinv{\brac*{\pinv{B}A^*}}=\pinv{\brac*{A^*}}B,\quad \pinv{\brac*{B^*\pinv{A}}}=A\pinv{\brac*{B^*}}.\]
    
\end{corollary}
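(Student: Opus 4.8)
The plan is to work throughout in the special SVDs guaranteed by Theorem~\ref{thm:necessary}. These make every right singular vector $v_{A,j}$ with $j\le r_A$ lie either in $\rng{B}$ (exactly when $j\in J_A$) or in $\nullsp{B^*}$, and dually every left singular vector $u_{B,i}$ with $i\le r_B$ lie either in $\rng{A^*}$ (exactly when $i\in J_B$) or in $\nullsp{A}$. Equivalently, $\rng{B}$ is a coordinate subspace in the $V_A$-basis and $\rng{A^*}$ is a coordinate subspace in the $U_B$-basis. By Remark~\ref{rem:range_of_Grevilles_spaces} the families $\{v_{A,j}:j\in J_A\}$ and $\{u_{B,i}:i\in J_B\}$ are two orthonormal bases of the one subspace $\rng{A^*}\cap\rng{B}$, so $\abs{J_A}=\abs{J_B}$ and the diagonal blocks $\widetilde{\Sigma}_A,\widetilde{\Sigma}_B$ are invertible.

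First I would compute the two ranges. Since $\rng{AB}=A\rng{B}$ and $\rng{B}$ is spanned by columns of $V_A$, applying $A$ sends each $v_{A,j}\mapsto\sigma_{A,j}u_{A,j}$ for $j\le r_A$ and annihilates the columns lying in $\nullsp{A}$; hence $\rng{AB}=\linsp(u_{A,j}:j\in J_A)=\rng{U_{J_A}}$. The dual computation, using $(AB)^*=B^*A^*$, $\rng{B^*A^*}=B^*\rng{A^*}$, and $\rng{A^*}$ spanned by columns of $U_B$, gives $\rng{B^*A^*}=\rng{V_{J_B}}$.

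For the factorization formula I would expand the chosen SVDs as $\pinv{B}=\sum_{i=1}^{r_B}\sigma_{B,i}^{-1}v_{B,i}u_{B,i}^*$ and $\pinv{A}=\sum_{j=1}^{r_A}\sigma_{A,j}^{-1}v_{A,j}u_{A,j}^*$, and multiply. Each cross term carries the scalar $u_{B,i}^*v_{A,j}$, which vanishes whenever $i\notin J_B$ (then $u_{B,i}\in\nullsp{A}\perp\rng{A^*}\ni v_{A,j}$) or $j\notin J_A$ (then $v_{A,j}\in\nullsp{B^*}\perp\rng{B}\ni u_{B,i}$). Only the indices $i\in J_B,\ j\in J_A$ survive, and the surviving double sum is exactly $(V_{J_B}\widetilde{\Sigma}_B^{-1}U_{J_B}^*)(V_{J_A}\widetilde{\Sigma}_A^{-1}U_{J_A}^*)$; since $\pinv{(AB)}=\pinv{B}\pinv{A}$ by hypothesis, the claimed expression follows.

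Finally, for the list of reverse order laws I would reduce each identity $\pinv{(XY)}=\pinv{Y}\pinv{X}$, via Greville (Theorem~\ref{thm:greville_conds}), to the two invariance statements $X^*X\rng{Y}\subseteq\rng{Y}$ and $YY^*\rng{X^*}\subseteq\rng{X^*}$ (using $\rng{X^*XY}=X^*X\rng{Y}$ and $\rng{YY^*X^*}=YY^*\rng{X^*}$), after identifying the factors: $(AB,B^*)$, $(AB,\pinv{B})$, $(A^*,AB)$, $(\pinv{A},AB)$, $(A^*A,B)$, $(A,BB^*)$, $(\pinv{A}A,B)$, $(A,B\pinv{B})$, $(\pinv{B},A^*)$, $(B^*,\pinv{A})$, where I read off the right-hand sides with the standard identities $\pinv{(\pinv{M})}=M$ and $\pinv{P}=P$ for an orthogonal projection $P$. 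Every check then follows from one of two observations. The first is that each relevant range ($\rng{B}$ and $\rng{AB}=\rng{U_{J_A}}$, $\rng{A^*}$ and $\rng{(AB)^*}=\rng{V_{J_B}}$) is a coordinate subspace in exactly one of the bases $V_A,U_A,U_B,V_B$, while every Hermitian matrix arising as an $X^*X$ or $YY^*$ here is diagonal in precisely that basis ($A^*A,\pinv{A}A,\pinv{A}(\pinv{A})^*$ in $V_A$; $AA^*,(\pinv{A})^*\pinv{A}$ in $U_A$; $BB^*,B\pinv{B},(\pinv{B})^*\pinv{B}$ in $U_B$; $B^*B,\pinv{B}(\pinv{B})^*$ in $V_B$), so the invariance is immediate; note this single input absorbs the structural facts that $\rng{B}$ is $A^*A$-invariant and $\rng{A^*}$ is $BB^*$-invariant, and also the two projection cases. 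The second observation handles the four pairs built from $AB$: there one invariance is free from $\rng{X^*X}\subseteq\rng{X^*}$ together with $\rng{(AB)^*}\subseteq\rng{B^*}$, or from $\rng{YY^*}\subseteq\rng{Y}$ together with $\rng{AB}\subseteq\rng{A}$. I expect the only real difficulty to be organizational rather than analytic: keeping the ten pairs straight and, where convenient, halving the casework by the transpose symmetry of Lemma~\ref{lem:pinv_and_conj}, since no single verification requires more than the two elementary observations above.
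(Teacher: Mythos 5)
Your proof is correct, and its skeleton is the same as the paper's: everything is derived from the special SVDs supplied by Theorem~\ref{thm:necessary} (so that $\rng{B}$ is a coordinate subspace in the $V_A$-basis and $\rng{A^*}$ in the $U_B$-basis, with Remark~\ref{rem:range_of_Grevilles_spaces} identifying $\linsp(v_{A,j}:j\in J_A)=\linsp(u_{B,i}:i\in J_B)=\rng{A^*}\cap\rng{B}$), and each of the ten identities is reduced to Greville's conditions (Theorem~\ref{thm:greville_conds}). The one genuine divergence is the pseudoinverse formula: the paper forms the rank decomposition $AB=U_{J_A}\widetilde{\Sigma}_A Q\widetilde{\Sigma}_B V_{J_B}^*$ with unitary middle factor $Q$ and reads off $\pinv{(AB)}$ from the full-rank factorization, without needing the hypothesis at that step, whereas you expand $\pinv{B}\pinv{A}$ as a double sum, kill the cross terms $u_{B,i}^*v_{A,j}$ by orthogonality, and then invoke the hypothesis $\pinv{(AB)}=\pinv{B}\pinv{A}$; both hinge on the same fact (the scalar $u_{B,i}^*v_{A,j}$ vanishes unless $i\in J_B$ and $j\in J_A$), and yours is slightly shorter given the hypothesis, while the paper's yields $\pinv{(AB)}$ intrinsically. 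For the list of reverse order laws the paper only computes the Greville ranges explicitly for the first pair (e.g.\ $\rng{(AB)^*(AB)B^*}=\rng{V_{J_B}}$) and declares the rest similar; your systematization---one inclusion free from $\rng{X^*X\,\cdot\,}\subset\rng{X^*}$, the other from invariance of a coordinate subspace under an operator diagonal in that same basis---is a clean, uniform, and correct way to dispatch all ten cases, and in that respect is somewhat more complete than the proof in the paper.
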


\begin{proof}
    To see the first identity, use the sufficient condition and
    \[\rng{\brac*{AB}^*\brac*{AB}B^*}=\rng{B^*A^*ABB^*}=\rng{V_{J_B}},\]
    \[\rng{B^*\brac*{B^*}^*\brac*{AB}^*}=\rng{B^*BB^*A^*}=\rng{V_{J_B}}.\]
    The other identities follow in a similar manner.
    To see that formula for $\pinv{\brac*{AB}}$ let $Q=U_{J_B}^*V_{J_A}$. Then $Q$ is an orthonormal matrix and 
    \[AB=    U_{J_A}\widetilde{\Sigma}_A Q\widetilde{\Sigma}_B V_{J_B}^*.\]
    This gives a rank decomposition of matrix $AB$.
\end{proof}

\begin{corollary}\label{cor:image_of_AB_under_1234}
    If $\pinv{\brac*{AB}}=\pinv{B}\pinv{A}$ then
    \[AB\pinv{\brac*{AB}}=\brac*{AA^*AB}\pinv{\brac*{AA^*AB}}=
    \brac*{ABB^*A^*}\pinv{\brac*{ABB^*A^*}}.\]
\end{corollary}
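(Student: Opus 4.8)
The plan is to translate the three matrix identities into equalities of column spaces. For any matrix $M$, the product $MM^{+}$ is the orthogonal projection onto $\rng{M}$: it is Hermitian by~(\ref{eq:Penrose_cond3}), it is idempotent because $MM^{+}MM^{+}=(MM^{+}M)M^{+}=MM^{+}$ by~(\ref{eq:Penrose_cond1}), and its range is $\rng{M}$ since $\rng{MM^{+}}\subset\rng{M}$ while $M=MM^{+}M$ gives the reverse inclusion. As an orthogonal projection is determined by its range, proving the corollary reduces to establishing
\[\rng{AB}=\rng{AA^*AB}=\rng{ABB^*A^*}.\]

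First I would dispose of the rightmost equality, which requires no hypothesis. Observe that $ABB^*A^*=(AB)(AB)^*$, and that $\rng{MM^*}=\rng{M}$ for every matrix $M$ (for instance, $MM^*$ is Hermitian with $\nullsp{MM^*}=\nullsp{M^*}$, hence $\rng{MM^*}=\nullsp{M^*}^{\perp}=\rng{M}$). Taking $M=AB$ yields $\rng{ABB^*A^*}=\rng{AB}$ immediately.

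The middle equality is where the reverse order law is used, through the SVD structure established earlier. By Corollary~\ref{cor:ROL_AB_implies_possible_ROLs}, the assumption $\pinv{(AB)}=\pinv{B}\pinv{A}$ gives $\rng{AB}=\rng{U_{J,A}}$; that is, $\rng{AB}$ is spanned by the left singular vectors $u_{A,j}$ of $A$ with $j\in J_A\subset\{1,\ldots,r_A\}$. Since $AA^*=\sum_{j=1}^{r_A}\sigma_{A,j}^2\,u_{A,j}u_{A,j}^*$, these spanning vectors are eigenvectors of $AA^*$ with \emph{positive} eigenvalues $\sigma_{A,j}^2$, so $AA^*$ restricts to an isomorphism of $\rng{AB}$ onto itself. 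Combining this with the general identity $\rng{AA^*AB}=AA^*\,\rng{AB}$ (the image of the subspace $\rng{AB}$ under the linear map $AA^*$) gives $\rng{AA^*AB}=\rng{AB}$, which finishes the reduction.

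I expect the only load-bearing point to be this middle equality: in general $AA^*$ does not preserve $\rng{AB}$ --- for example, with $A=\diag(1,2)$ and $B=[1,1]^*$ one computes $\rng{AA^*AB}=\linsp([1,8]^*)\neq\linsp([1,2]^*)=\rng{AB}$ --- and it is precisely the reverse order law that forces $\rng{AB}$ to be spanned by singular vectors of $A$, hence $AA^*$-invariant. The remaining equalities are routine and hold unconditionally.
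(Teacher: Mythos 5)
Your proof is correct, and it reaches the conclusion by a genuinely different route in its details. Both arguments start from the same reduction---$M\pinv{M}$ is the orthogonal projection onto $\rng{M}$, so it suffices to prove $\rng{AB}=\rng{AA^*AB}=\rng{ABB^*A^*}$---but the paper then handles the two equalities symmetrically: by Remark~\ref{rem:range_of_Grevilles_spaces}, the reverse order law gives $\rng{A^*AB}=\rng{BB^*A^*}=\rng{A^*}\cap\rng{B}$, and applying $A$ to these spaces identifies all three ranges with $A\brac*{\rng{A^*}\cap\rng{B}}=\rng{AB}$ at once. You instead split the problem asymmetrically: you observe that $ABB^*A^*=(AB)(AB)^*$, so the rightmost equality holds \emph{unconditionally}---a point the paper's symmetric treatment obscures---and you obtain the middle equality from the SVD structure of Corollary~\ref{cor:ROL_AB_implies_possible_ROLs}: under the reverse order law, $\rng{AB}$ is spanned by left singular vectors of $A$, hence is an $AA^*$-invariant subspace on which $AA^*$ acts as an isomorphism, so $\rng{AA^*AB}=AA^*\brac*{\rng{AB}}=\rng{AB}$. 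Your version is slightly longer but isolates exactly where the hypothesis enters (only the middle equality), and your counterexample $A=\diag(1,2)$, $B=[1,1]^*$ correctly certifies that this equality can fail without it; the paper's version is shorter and treats the two factorizations $AA^*AB$ and $ABB^*A^*$ on an equal footing, at the cost of invoking the reverse order law for both.
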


\begin{proof}
    The image of $AB$ is $A\brac*{\rng{A^*}\cap\rng{B}}$, on the other hand $\rng{A^*}\cap\rng{B}=\rng{A^*AB}=\rng{BB^*A^*}$.
\end{proof}

\begin{remark}
    Equation~(\ref{eq:spaces}) holds for {\bf some} SVD decompositions of matrices $A,B$ which does not imply that it will be true for {\bf any} SVD decompositions. See Example~\ref{ex:not_for_any}.
\end{remark}

\section{Another Equivalent Conditions}\label{sec:commuting} 
In this section, several new equivalent conditions to $\pinv{\brac*{AB}}=\pinv{B}\pinv{A}$ are given. Through the section, it assumed that $A\in\bK^{m\times n},B\in\bK^{n\times k}$.

\begin{theorem}\label{thm:pAA_BB_and_AA_BpB_commute}
    \[\pinv{\brac*{AB}}=\pinv{B}\pinv{A} \Longleftrightarrow \pinv{A}ABB^*=BB^*\pinv{A}A\text{ and }B\pinv{B}A^*A=A^*AB\pinv{B},\]
    that is $\pinv{A}A$ commutes with $BB^*$ and $B\pinv{B}$ commutes with $A^*A$.
\end{theorem}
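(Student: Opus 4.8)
The plan is to translate both the commutativity conditions and the reverse order law into statements about invariant subspaces of the Hermitian matrices $A^*A$ and $BB^*$, and then to match the two via Greville's theorem (Theorem~\ref{thm:greville_conds}).

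First I would record two structural facts. From the Penrose conditions (\ref{eq:Penrose_cond1})--(\ref{eq:Penrose_cond4}) the matrix $\pinv{A}A$ is Hermitian and idempotent, hence it is the orthogonal projection onto $\rng{\pinv{A}}=\rng{A^*}$; symmetrically $B\pinv{B}$ is the orthogonal projection onto $\rng{B}$. The second ingredient is the standard commutation--invariance principle: for a Hermitian matrix $H$ and the orthogonal projection $P$ onto a subspace $S$, one has $HP=PH$ if and only if $S$ is invariant under $H$. The forward implication is immediate, since for $x\in S$ we get $Hx=HPx=PHx\in S$. The converse uses that the orthogonal complement of an invariant subspace of a Hermitian matrix is again invariant (Spectral Theorem), so that $H$ respects the decomposition $\bK^n=S\oplus S^\perp$ on which $P$ acts blockwise; writing $x=Px+(I-P)x$ then gives $HPx\in S$, $H(I-P)x\in S^\perp$, whence $PHx=HPx$.

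Applying this principle is the heart of the argument. Since every element of $\rng{A^*}$ has the form $A^*y$, invariance of $\rng{A^*}$ under $BB^*$ reads exactly $\rng{BB^*A^*}\subset\rng{A^*}$; hence $\pinv{A}A$ commutes with $BB^*$ if and only if the second Greville inclusion holds. Symmetrically, since every element of $\rng{B}$ has the form $Bz$, invariance of $\rng{B}$ under $A^*A$ reads $\rng{A^*AB}\subset\rng{B}$, so $B\pinv{B}$ commutes with $A^*A$ if and only if the first Greville inclusion holds. Combining the two equivalences with Theorem~\ref{thm:greville_conds}, which asserts that $\pinv{\brac*{AB}}=\pinv{B}\pinv{A}$ is equivalent to both Greville inclusions holding simultaneously, yields the stated equivalence.

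I do not expect a serious obstacle, as the proof is essentially a dictionary between commuting projections and invariant subspaces. The one point requiring care is the direction deducing commutation from invariance, where one must invoke invariance of the \emph{orthogonal complement} (valid precisely because $A^*A$ and $BB^*$ are Hermitian) rather than only one-sided invariance; without Hermitianness the equivalence would fail. The remaining bookkeeping, namely the range identities $BB^*(\rng{A^*})=\rng{BB^*A^*}$ and $A^*A(\rng{B})=\rng{A^*AB}$, follows at once from the representations $A^*y$ and $Bz$ of generic elements of the two ranges.
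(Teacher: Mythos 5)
Your proof is correct and follows essentially the same route as the paper: both reduce the reverse order law to Greville's inclusions (Theorem~\ref{thm:greville_conds}) and identify those inclusions with invariance of $\rng{A^*}$ and $\rng{B}$ under the Hermitian matrices $BB^*$ and $A^*A$, which is exactly commutation with the orthogonal projections $\pinv{A}A$ and $B\pinv{B}$. The only difference is presentational: you package both directions into a single commutation--invariance lemma, whereas the paper proves one direction by an explicit eigenspace decomposition of $\bK^n$ and the other by direct range manipulations.
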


\begin{proof}
   $(\Longleftarrow)$ Both Greville's conditions are satisfied as $\rng{A^*}=\pinv{A}A, \rng{B}=B\pinv{B}$
    \[\rng{A^*AB}\subset \rng{B}\Longleftrightarrow \rng{A^*AB\pinv{B}}\subset \rng{B\pinv{B}},\]
    \[\rng{BB^*A^*}\subset \rng{A^*}\Longleftrightarrow \rng{BB^*\pinv{A}A}\subset \rng{\pinv{A}A}.\]
    $(\Longrightarrow)$\\
    Since $\rng{BB^*A^*}\subset \rng{A^*}$
    \[\bK^n=\rng{A^*}\oplus \nullsp{A}=\brac*{\rng{A^*}_*\oplus\rng{A^*}_0}\oplus\brac*{\nullsp{A}_*\oplus\nullsp{A}_0}, \]
    where $\rng{A^*}_0,\nullsp{A}_0\subset\nullsp{B^*}=\nullsp{BB^*}$ and $\rng{A^*}_*,\nullsp{A}_*$ are direct sums of eigenspaces
    of $BB^*$ for non--zero eigenvalues.
    Let $v\in\bK^n$ be any vector where
    \[v=v_{\rng{A^*},*}+v_{\rng{A^*},0}+v_{\nullsp{A},*}+v_{\nullsp{A},0},\]
    is the decomposition of $v$ according to this direct sum.
    Then
    \[BB^*v=BB^*v_{\rng{A^*},*}+BB^*v_{\nullsp{A},*},\]
    \[\pinv{A}ABB^*v=BB^*v_{\rng{A^*},*}.\]
    On the other hand
    \[\pinv{A}Av=v_{\rng{A^*},*}+v_{\rng{A^*},0},\]
    \[BB^*\pinv{A}Av=BB^*v_{\rng{A^*},*}=\pinv{A}ABB^*v. \]
    
\end{proof}

\begin{lemma}\label{lem:ROL_AB_implies_ROL_AA_BB}
\[\pinv{\brac*{AB}}=\pinv{B}\pinv{A}\Longleftrightarrow \pinv{\brac*{\brac*{A^*A}\brac*{BB^*}}}=\pinv{\brac*{BB^*}}\pinv{\brac*{A^*A}}.\]    
\end{lemma}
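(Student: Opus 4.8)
The plan is to reduce both reverse order laws to invariance statements via Greville's Theorem~\ref{thm:greville_conds} and then to observe that, for a positive semidefinite Hermitian matrix, passing to the square does not change its invariant subspaces.

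First I would rewrite the left-hand side. By Theorem~\ref{thm:greville_conds}, $\pinv{(AB)}=\pinv{B}\pinv{A}$ is equivalent to the two inclusions $\rng{A^*AB}\subset\rng{B}$ and $\rng{BB^*A^*}\subset\rng{A^*}$. Since $\rng{A^*AB}$ is the image of $\rng{B}$ under the Hermitian matrix $A^*A$, and $\rng{BB^*A^*}$ is the image of $\rng{A^*}$ under the Hermitian matrix $BB^*$, these inclusions say precisely that $\rng{B}$ is invariant under $A^*A$ and that $\rng{A^*}$ is invariant under $BB^*$. Next I would apply Theorem~\ref{thm:greville_conds} to the pair $P:=A^*A$, $S:=BB^*$. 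Because $P$ and $S$ are Hermitian, $P^*=P$ and $S^*=S$, so the reverse order law $\pinv{(PS)}=\pinv{S}\pinv{P}$ is equivalent to $\rng{P^2S}\subset\rng{S}$ and $\rng{S^2P}\subset\rng{P}$. Using Remark~\ref{rem:bases_of_im_ker}, $\rng{P}=\rng{A^*A}=\rng{A^*}$ and $\rng{S}=\rng{BB^*}=\rng{B}$, so these two conditions say that $\rng{B}$ is invariant under $(A^*A)^2$ and that $\rng{A^*}$ is invariant under $(BB^*)^2$.

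Comparing the two pairs of conditions, the whole equivalence collapses to a single claim, which I expect to be the crux: for a positive semidefinite Hermitian matrix $P$ and a subspace $W$, $W$ is invariant under $P$ if and only if $W$ is invariant under $P^2$. I would prove this from the Spectral Theorem: writing $P=\sum_i\lambda_iE_i$ with distinct eigenvalues $\lambda_i\ge 0$ and orthogonal spectral projections $E_i$, one has $P^2=\sum_i\lambda_i^2E_i$, and since $x\mapsto x^2$ is injective on $[0,\infty)$ the numbers $\lambda_i^2$ are again distinct. Hence $P$ and $P^2$ have the same eigenspaces, and therefore the same invariant subspaces (every invariant subspace of a self-adjoint operator is the orthogonal sum of its intersections with the eigenspaces). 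Applying this with $P=A^*A$, $W=\rng{B}$, and with $P=BB^*$, $W=\rng{A^*}$, matches the two pairs of conditions term by term and closes the argument.

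As an alternative route one could instead invoke Theorem~\ref{thm:pAA_BB_and_AA_BpB_commute}. Here the key observation is that $\pinv{(A^*A)}(A^*A)=\pinv{A}A$ and $(BB^*)\pinv{(BB^*)}=B\pinv{B}$, since each side is the orthogonal projection onto $\rng{A^*}$, respectively $\rng{B}$ (using $\rng{A^*A}=\rng{A^*}$ and $\rng{BB^*}=\rng{B}$ from Remark~\ref{rem:bases_of_im_ker}). Applying Theorem~\ref{thm:pAA_BB_and_AA_BpB_commute} to the pair $(A^*A,BB^*)$ then turns the commuting conditions into ``$\pinv{A}A$ commutes with $(BB^*)^2$ and $B\pinv{B}$ commutes with $(A^*A)^2$'', and the same squaring lemma (a projection onto $W$ commutes with a Hermitian $T$ iff $W$ is $T$-invariant) reduces this to the original commuting conditions. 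The only delicate point in either route is the invariance-under-squaring lemma, whose proof genuinely uses the positive semidefiniteness of $A^*A$ and $BB^*$.
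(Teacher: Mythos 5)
Your proposal is correct and takes essentially the same route as the paper: both sides are reduced via Greville's conditions (Theorem~\ref{thm:greville_conds}) to the statements that $\rng{B}$ is invariant under $A^*A$ (respectively $\brac*{A^*A}^2$) and $\rng{A^*}$ is invariant under $BB^*$ (respectively $\brac*{BB^*}^2$), and the crux in both is that a subspace is invariant under a positive semidefinite Hermitian matrix if and only if it is invariant under its square. If anything, your spectral argument (injectivity of $x\mapsto x^2$ on $[0,\infty)$, so $P$ and $P^2$ have identical eigenspaces and hence identical invariant subspaces) is more explicit than the paper's parenthetical appeal to simultaneous diagonalizability, which on its own would not rule out mixing eigenvectors for $\pm\sqrt{\mu}$; the positive semidefiniteness you flag as the delicate point is exactly what makes the paper's step valid.
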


\begin{proof}
    The proof in both directions follows from the Greville's conditions~\ref{thm:greville_conds}. \\
    $(\Longrightarrow)$ If $\rng{B}$ is an invariant subspace for $A^*A$ then $\rng{BB^*}=\rng{B}$ is an invariant subspace of the matrix $\brac*{A^*A}^2$. Similarly for $\rng{A^*}$. \\
    $(\Longleftarrow)$ If $\rng{BB^*}=\rng{B}$ is an invariant subspace of the diagonalizable Hermitian matrix $\brac*{A^*A}^2$ hence it is spanned by eigenvectors of $\brac*{A^*A}^2$ which are also eigenvectors of $A^*A$ (diagonalizable Hermitian matrices $A^*A$ and $\brac*{A^*A}^2$ commute, so they are simultaneously diagonalizable). Similarly for $\rng{A^*}$.
    
\end{proof}

\begin{lemma}\label{lem:orthogonal_proj}
    \[\pinv{\brac*{AB}}=\pinv{B}\pinv{A}\Longleftrightarrow  P=B\pinv{\brac*{AB}}A\text{ is an orthogonal projection}.\]    
\end{lemma}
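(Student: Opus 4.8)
The plan is to reduce the statement to a clean condition on column spaces and then invoke Greville's conditions. First I would record the key preliminary observation that $P=B\pinv{\brac*{AB}}A$ is \emph{always} idempotent, regardless of the reverse order law: since $\pinv{\brac*{AB}}\brac*{AB}\pinv{\brac*{AB}}=\pinv{\brac*{AB}}$ by Penrose condition~(\ref{eq:Penrose_cond2}), one computes $P^2=B\pinv{\brac*{AB}}\brac*{AB}\pinv{\brac*{AB}}A=B\pinv{\brac*{AB}}A=P$. Consequently $P$ is an orthogonal projection if and only if it is Hermitian, and for an idempotent matrix this is equivalent to $\rng{P}=\rng{P^*}$: an idempotent is the oblique projection onto $\rng{P}$ along $\nullsp{P}=\rng{P^*}^\perp$, which is orthogonal exactly when $\rng{P}=\rng{P^*}$. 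So the whole lemma comes down to deciding when these two ranges coincide.

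The core of the argument is therefore to identify $\rng{P}$ and $\rng{P^*}$ explicitly. I would first show $\rng{P}=\rng{B\pinv{\brac*{AB}}}$: the inclusion $\subset$ is immediate, and the reverse follows from $B\pinv{\brac*{AB}}=B\pinv{\brac*{AB}}\brac*{AB}\pinv{\brac*{AB}}=P\,B\pinv{\brac*{AB}}$, so every column of $B\pinv{\brac*{AB}}$ lies in $\rng{P}$. Using $\rng{\pinv{\brac*{AB}}}=\rng{\brac*{AB}^*}=\rng{B^*A^*}$ then gives $\rng{P}=B\cdot\rng{B^*A^*}=\rng{BB^*A^*}$. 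Applying the same computation to $P^*=A^*\pinv{\brac*{B^*A^*}}B^*$ — here I use Lemma~\ref{lem:pinv_and_conj} to write $\brac*{\pinv{\brac*{AB}}}^*=\pinv{\brac*{B^*A^*}}$ — with the roles of $A,B$ now played by $B^*,A^*$, yields $\rng{P^*}=\rng{A^*AB}$. Hence $P$ is an orthogonal projection if and only if $\rng{BB^*A^*}=\rng{A^*AB}$.

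It then remains to prove that $\rng{A^*AB}=\rng{BB^*A^*}$ is equivalent to the reverse order law. The forward implication is already contained in Remark~\ref{rem:range_of_Grevilles_spaces}. For the converse I would use the two trivial inclusions $\rng{A^*AB}\subset\rng{A^*}$ and $\rng{BB^*A^*}\subset\rng{B}$, valid for any matrices: if the two spaces are equal to a common subspace $S$, then $S=\rng{A^*AB}=\rng{BB^*A^*}\subset\rng{B}$ and $S=\rng{BB^*A^*}=\rng{A^*AB}\subset\rng{A^*}$, which are precisely the Greville conditions $\rng{A^*AB}\subset\rng{B}$ and $\rng{BB^*A^*}\subset\rng{A^*}$; Theorem~\ref{thm:greville_conds} then delivers the reverse order law. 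I expect the main obstacle to be the range computation of the second paragraph: the idempotency and the closing Greville step are short, but correctly pinning down $\rng{P}=\rng{BB^*A^*}$ and $\rng{P^*}=\rng{A^*AB}$ — in particular keeping the conjugate-transpose bookkeeping straight via Lemma~\ref{lem:pinv_and_conj} — is where the real work lies.
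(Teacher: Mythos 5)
Your proof is correct, and it pivots on exactly the same reduction as the paper: $P$ is an orthogonal projection if and only if $\rng{BB^*A^*}=\rng{A^*AB}$, and this range equality is then tied to the reverse order law via Remark~\ref{rem:range_of_Grevilles_spaces} and Theorem~\ref{thm:greville_conds}. The difference is how you reach that pivot. The paper's proof is a one-liner that outsources the structural facts to \cite[Theorem~3.1]{Cerny}, quoting $\rng{P}=\rng{BB^*A^*}$ and $\nullsp{P}=\nullsp{B^*A^*A}$ and observing that range and null space are orthogonal complements precisely when $\rng{BB^*A^*}=\rng{A^*AB}$. You derive the same facts from scratch: idempotency of $P$ from Penrose condition~(\ref{eq:Penrose_cond2}), $\rng{P}=\rng{B\pinv{\brac*{AB}}}=\rng{BB^*A^*}$ from $\rng{\pinv{X}}=\rng{X^*}$, and $\rng{P^*}=\rng{A^*AB}$ by running the identical computation on $P^*=A^*\pinv{\brac*{B^*A^*}}B^*$ via Lemma~\ref{lem:pinv_and_conj}; your criterion that an idempotent is an orthogonal projection exactly when $\rng{P}=\rng{P^*}$ is the same orthogonality statement the paper extracts from Cerny's range/null-space formulas, since $\nullsp{P}=\rng{P^*}^\perp$. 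You also spell out the step the paper leaves implicit, namely that the range equality forces both Greville inclusions through the trivial containments $\rng{A^*AB}\subset\rng{A^*}$ and $\rng{BB^*A^*}\subset\rng{B}$. The paper's route buys brevity; yours buys self-containedness and a fully checkable argument, at the modest cost of invoking the standard facts $\rng{\pinv{X}}=\rng{X^*}$ and $X\cdot\rng{M}=\rng{XM}$, both available from the SVD description of the pseudoinverse already present in the paper.
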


\begin{proof}
  By~\cite[Theorem~3.1]{Cerny}
  \[\rng{P}=\rng{BB^*A},\quad \nullsp{P}=\nullsp{B^*A^*A},\]
  and they are orthogonal exactly when $\rng{BB^*A}=\rng{A^*AB}$ which is equivalent to the reverse order law by 
  Remark~\ref{rem:range_of_Grevilles_spaces} and the Greville's conditions.
\end{proof}

\section{Miscellaneous}\label{sec:misc}
In this Section we give some concrete examples and discuss the previously--known cases when $\pinv{(AB)}=\pinv{B}\pinv{A}$.

\begin{example}
Let
    \[A=\left[\begin{array}{rrr} 1 & 0 & 1\\ 0 & -1 & 0 \end{array}\right],\quad B=\left[\begin{array}{r} 1\\ 0\\ -1 \end{array}\right].\]
    Then $\pinv{(AB)}=\pinv{B}\pinv{A}=0$ and there exists SVD decomposition $A=U_A\Sigma_A V_A^*$ where
    \[U_A=\left[\begin{array}{rr} 1 & 0\\ 0 & -1 \end{array}\right],\quad \Sigma_A=\left[\begin{array}{ccc} \sqrt{2} & 0 & 0\\ 0 & 1 & 0 \end{array}\right],V_A=\frac{1}{\sqrt{2}}\left[\begin{array}{rcr} 1 & 0 & 1\\ 0 & \sqrt{2} & 0\\ 1 & 0 & -1 \end{array}\right],\]
    There exists SVD decomposition $B=U_B\Sigma_B V_B^*$ where
    \[U_B=\frac{1}{\sqrt{2}}\left[\begin{array}{rcr} 1 & 0 & 1\\ 0 & \sqrt{2} & 0\\ -1 & 0 & 1 \end{array}\right],\quad
    \Sigma_B=\left[\begin{array}{c} \sqrt{2}\\ 0\\ 0 \end{array}\right],\quad
    V_B=\left[\begin{array}{c} 1 \end{array}\right].\]
    Then

    \[\linsp \left(u_{B,i}\in\bK^n \mid u_{B,i}\notin\nullsp{A},\ i\in\{1,\ldots,r_B\}\right)=
     \linsp \left(v_{A,i}\in\bK^n \mid v_{A,i}\notin\nullsp{B^*},\ i\in\{1,\ldots,r_A\}\right)=\{0\},\]
     where $r_A=2,r_B=1$.
\end{example}

\begin{example}
    Let $A=ST$ be a rank decomposition of matrix $A$ where $S\in\bK^{m\times r},\ T\in\bK^{r\times n}$, i.e. $\rank S=\rank T=r$. Then
    \[\pinv{(ST)}=\pinv{T}\pinv{S}.\]
    Since $S^*,\ T$ are of full row rank $\rng{S^*}=\rng{T}=\bK^r$ and hence
    $\nullsp{S}=\nullsp{T^*}=\{0\}$ and consequently for any SVD decompositions of $S$ and $T$
    \[\linsp \left(u_{T,i}\in\bK^r \mid u_{T,i}\notin\nullsp{S},\ i\in\{1,\ldots,r\}\right)=
     \linsp \left(v_{S,i}\in\bK^r \mid v_{S,i}\notin\nullsp{T^*},\ i\in\{1,\ldots,r\}\right)=\bK^r.\]
\end{example}

\begin{example}
    Assume that $\rng{A^*}=\rng{B}$. It is well--known then that
     \[\pinv{(AB)}=\pinv{B}\pinv{A}.\]
     Since it is impossible for any non--zero vector in $\rng{B}$ to be in $\nullsp{A}$ and for it is impossible for any non--zero vector in $\rng{A^*}$ to be in $\nullsp{B^*}$, therefore, for any SVD decompositions of $A,B$
     \[\linsp \left(u_{B,i}\in\bK^n \mid u_{B,i}\notin\nullsp{A},\ i\in\{1,\ldots,r_B\}\right)=
     \linsp \left(v_{A,i}\in\bK^n \mid v_{A,i}\notin\nullsp{B^*},\ i\in\{1,\ldots,r_A\}\right)=\]
     \[=\rng{A^*}=\rng{B}.\]
     \end{example}
\begin{example}\label{ex:not_for_any}
    Let $A\in\bK^{m\times n}$ be any non--zero matrix such that $\nullsp{A}\neq 0$.  Let $B=Q\in\bK^{n\times n}$ be an orthogonal matrix. Then
     \[\pinv{(AQ)}=\pinv{Q}\pinv{A}=Q^*\pinv{A}.\]
     The right hand side of the formula~(\ref{eq:spaces}) is equal to
 \[\linsp \left(v_{A,i}\in\bK^n \mid v_{A,i}\notin\nullsp{Q^*},\ i\in\{1,\ldots,r_A\}\right)=\rng{A^*}.\]
Any orthonormal matrix $U_B\in\bK^{n\times n}$ induces an SVD decomposition of matrix $Q$ by the formula
 \[Q=U_BI(Q^*U_B)^*,\]
 where $\Sigma_B=I$ and any SVD decomposition of $Q$ is of that form. If none of the vectors $u_{B,1},\ldots,u_{B,n}\in\bK^n$  are in $\nullsp{A}\subsetneqq \bK^n$ then the left hand side of the formula~(\ref{eq:spaces}) is equal to
 \[\linsp \left(u_{B,i}\in\bK^n \mid u_{B,i}\notin\nullsp{A},\ i\in\{1,\ldots,r_B\}\right)=\bK^n\neq \rng{A^*}.\]
 On the other hand, if some columns of $U_B$ form an orthonormal basis of $\rng{A^*}$ and the remaining ones an orthonormal basis of $\nullsp{A}$ then
 \[\linsp \left(u_{B,i}\in\bK^n \mid u_{B,i}\notin\nullsp{A},\ i\in\{1,\ldots,r_B\}\right)=\rng{A^*}.\]
 This shows that condition~\ref{eq:spaces} does not hold for any SVD decompositions of $A$ and $B$.
\end{example}

\begin{remark}
Theorems~\ref{thm:sufficient} and~\ref{thm:necessary} show that when the reverse order law holds, it is possible to choose appropriate vectors $u_i$'s and $v_i$ such the equations~(\ref{eq:GA}) and (\ref{eq:GB}), as linear equations in unknowns $\sigma_{A,i}^2$ and $\sigma_{B,i}^2$ have zero coefficients. Therefore, when the reverse order law holds, for suitable matrices $V_A$ and $U_B$, as in Theorem~\ref{thm:necessary} (with compatible matrices $U_B,V_A$), the reverse order law holds too for matrices $U_A\widebar{\Sigma}_A,V_A,U_B\widebar{\Sigma}_B V_B$ where $\widebar{\Sigma}_A,\widebar{\Sigma}_B$ are arbitrary diagonal matrices of ranks $r_A,r_B$, respectively.
\end{remark}

\section{Geometric Necessary Condition}
In this section we give a equivalent geometric condition for $\pinv{(AB)}=\pinv{B}\pinv{A}$ in terms of the principal angles between $\rng{A^*}$ and $\rng{B}$, cf. Theorem~\ref{thm:geo_suff}.

\begin{definition}
    Let $\rng{U},\rng{V}\subset\bK^n$ be two subspaces given by matrices with orthonormal columns. The cosines of principal angles between $\rng{U},\rng{V}$ are equal to the singular values (with $0$ included) of the matrix $V^*U$, c.f.~\cite{BjorckGolub},\cite{GolubZha}.
\end{definition}

The principal angles are well--defined, that is, they do not depend on the choice of orthogonal bases of $\rng{U}$ and $\rng{V}$.

\begin{theorem}\label{thm:geo_suff}
    If the principal angles between the subspaces 
    $\rng{A^*}$ and $\rng{B}$ belong to the set $\bracc*{0,\frac{\pi}{2}}$ and $\rng{A^*}\cap\rng{B}$
    is spanned by eigenvectors of $A^*A$ and of $BB^*$ corresponding to the non--zero eigenvalues then $\pinv{(AB)}=\pinv{B}\pinv{A}$.
\end{theorem}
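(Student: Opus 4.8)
The plan is to verify the two Greville conditions of Theorem~\ref{thm:greville_conds}, rewritten as invariance statements. Put $P=\rng{A^*}$ and $Q=\rng{B}$, and recall from Remark~\ref{rem:bases_of_im_ker} that $P^{\perp}=\nullsp{A}$ and $Q^{\perp}=\nullsp{B^*}$. Since $\rng{A^*AB}=A^*A(Q)$ and $\rng{BB^*A^*}=BB^*(P)$, the conditions $\rng{A^*AB}\subset\rng{B}$ and $\rng{BB^*A^*}\subset\rng{A^*}$ say exactly that $Q$ is invariant under $A^*A$ and that $P$ is invariant under $BB^*$. So it suffices to establish these two invariances.

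First I would turn the principal-angle hypothesis into an orthogonal splitting of $P$ and $Q$. Choosing matrices $V,U$ with orthonormal columns spanning $P,Q$, the cosines of the principal angles are the singular values of $V^*U$ by definition; writing an SVD $V^*U=X\Sigma Y^*$ and passing to the bases $VX$ and $UY$ yields orthonormal bases $\{\tilde v_i\}$ of $P$ and $\{\tilde u_j\}$ of $Q$ with $\tilde v_i^*\tilde u_j=\sigma_i\delta_{ij}$. The hypothesis that every principal angle lies in $\bracc*{0,\frac{\pi}{2}}$ means $\sigma_i\in\{0,1\}$: when $\sigma_i=1$ the equality case of Cauchy--Schwarz forces $\tilde v_i=\tilde u_i\in P\cap Q$, and when $\sigma_i=0$ the vector $\tilde v_i$ is orthogonal to every $\tilde u_j$, hence $\tilde v_i\in P\cap Q^{\perp}$ (and symmetrically for the $\tilde u_j$). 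This produces the orthogonal decompositions
\[P=(P\cap Q)\oplus(P\cap Q^{\perp}),\qquad Q=(P\cap Q)\oplus(Q\cap P^{\perp}).\]

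With this splitting the invariances follow summand by summand. For $Q$ under $A^*A$: the second hypothesis says $P\cap Q$ is spanned by eigenvectors of $A^*A$, so $A^*A(P\cap Q)\subset P\cap Q\subset Q$, while $Q\cap P^{\perp}\subset\nullsp{A}$ is annihilated by $A$ and hence by $A^*A$; thus $A^*A(Q)\subset Q$. Symmetrically, $P\cap Q$ is spanned by eigenvectors of $BB^*$ and $P\cap Q^{\perp}\subset\nullsp{B^*}$ is annihilated by $BB^*$, giving $BB^*(P)\subset P$. Both Greville conditions hold, so $\pinv{(AB)}=\pinv{B}\pinv{A}$. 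I expect the one genuinely delicate step to be extracting the orthogonal splitting of $P$ and $Q$ from the spectral condition on $V^*U$; once the two skew summands $Q\cap P^{\perp}$ and $P\cap Q^{\perp}$ are identified as living in $\nullsp{A}$ and $\nullsp{B^*}$, where $A^*A$ and $BB^*$ vanish, the remaining verification is routine.
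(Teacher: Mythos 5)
Your proposal is correct and follows essentially the same route as the paper: both extract, via an SVD of the cross--Gram matrix $V^*U$ with singular values in $\{0,1\}$, orthonormal bases splitting $\rng{A^*}$ and $\rng{B}$ into the common part $\rng{A^*}\cap\rng{B}$ plus pieces lying in $\nullsp{B^*}$ and $\nullsp{A}$ respectively, and then verify Greville's two conditions summand by summand, using the eigenvector hypothesis for invariance of the intersection and annihilation on the null-space pieces. If anything, your explicit statement of the Cauchy--Schwarz equality step and the correct pairing (columns spanning $\rng{B}$ under $A^*A$, columns spanning $\rng{A^*}$ under $BB^*$) is slightly cleaner than the paper's wording, but the underlying argument is identical.
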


\begin{proof}
    The assumption implies that there exist matrices $V,U$ with orthonormal columns such that
    \[\rng{V}=\rng{A^*},\quad \rng{U}=\rng{B},\]
    and orthogonal matrices $P\in\bK^{r_A\times r_A},Q\in\bK^{r_B\times r_B}$ such that
    \[V^*U=P\left[\begin{array}{rr} I & 0\\ 0 & 0 \end{array}\right]Q^*,\]
    where $I$ is some $s$-by-$s$ unit matrix with $s\le\min(r_A,r_B)$. Therefore
    \[(VP)^*(UQ)=\left[\begin{array}{rr} I & 0\\ 0 & 0 \end{array}\right],\]
    and the first $s$ columns of $VP$ (equivalently of $UQ$) form an orthonormal basis of $\rng{A^*}\cap\rng{B}$. Since columns of $VP$ span $\rng{A^*}$ if follows that $\rng{(UQ)_{:,s:r_B}}\subset\nullsp{A}$, and similarly, since columns of $UQ$ span $\rng{B}$ if follows that $\rng{(VP)_{:,s:r_A}}\subset\nullsp{B^*}$. Let $W=\rng{A^*}\cap\rng{B}$. By assumption $W$ is an invariant space for both $A^*A$ and $BB^*$. Therefore, Greville's conditions
    \[\rng{A^*AB}\subset\rng{B}\quad\text{and}\quad \rng{BB^*A^*}\subset\rng{A^*},\]
    hold as images of columns of $VP$ under $A^*A$ are either $0$ or in $W$ and images of columns of $UQ$ under $BB^*A$ are either $0$ or in $W$.
\end{proof}

The assumption about $\rng{A^*}\cap\rng{B}$ being spanned by the singular vectors is essential, as follows from the following counterexample (it is also obvious, in view of Theorem~\ref{thm:12_eqv}, as being $\{1,2\}$ inverse is not in general equivalent to  being $\{1,2,3,4\}$ inverse.

\begin{example}
Let
\[A=\left[\begin{array}{rrr} 1 & 0 & 1\\ 0 & 1 & -1 \end{array}\right],\quad 
B=\left[\begin{array}{rr} 1 & 0\\ 0 & 1\\ 2 & 3 \end{array}\right].\]
Then the principal angles between $\rng{A^*}$ and $\rng{B}$ are $0,\frac{\pi}{2}$ and
\[\pinv{B}\pinv{A}\in  \minv{(AB)}{1,2}\setminus\bracc*{\minv{(AB)}{3}\cup\minv{(AB)}{4} }\]
    
\end{example}



\section{SVD of a Projection}
In this section we prove several equivalent conditions for $\pinv{B}\pinv{A}$ being an $\{1,2\}$-inverse of $AB$. Some of them were known before, see~\cite[Theorem~9.1$\langle 49\rangle,\langle 88\rangle,\langle 147\rangle,\langle 161\rangle$]{Tian512} but obtained by different methods.

\begin{lemma}\label{lem:prod_of_proj}
    Let $P,Q\in\bK^{n\times n}$ be matrices of two orthogonal projections, i.e., $P^2=P, Q^2=Q$ and $P^*=P,Q^*=Q$. Then
    \[PQv=v\Longleftrightarrow v\in\rng{P}\cap\rng{Q}.\]
    In particular, if $PQ$ is a projection then $\rng{PQ}=\rng{P}\cap\rng{Q}$.
\end{lemma}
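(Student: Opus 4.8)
The plan is to prove the biconditional first and then derive the ``in particular'' statement as an easy corollary. For the forward direction, suppose $PQv=v$. Since $P$ is an orthogonal projection, $\|Pw\|\le\|w\|$ for any $w$, with equality if and only if $w\in\rng{P}$; the same holds for $Q$. Applying this twice gives $\|v\|=\|PQv\|\le\|Qv\|\le\|v\|$, so both inequalities are equalities. The equality $\|Qv\|=\|v\|$ forces $v\in\rng{Q}$, hence $Qv=v$. Then $\|PQv\|=\|P(Qv)\|=\|Pv\|$ and the equality $\|Pv\|=\|v\|$ forces $v\in\rng{P}$, hence $Pv=v$. Thus $v\in\rng{P}\cap\rng{Q}$. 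The reverse direction is immediate: if $v\in\rng{P}\cap\rng{Q}$ then $Qv=v$ and $Pv=v$, so $PQv=Pv=v$.

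For the ``in particular'' claim, assume $PQ$ is a projection, i.e.\ $(PQ)^2=PQ$. I would argue $\rng{PQ}=\rng{P}\cap\rng{Q}$ by double inclusion. The inclusion $\rng{P}\cap\rng{Q}\subset\rng{PQ}$ follows at once from the biconditional, since every $v\in\rng{P}\cap\rng{Q}$ satisfies $v=PQv\in\rng{PQ}$. For the opposite inclusion, let $w\in\rng{PQ}$, say $w=PQx$. Because $PQ$ is idempotent, $w=PQw$, so by the biconditional just proved $w\in\rng{P}\cap\rng{Q}$. This gives $\rng{PQ}\subset\rng{P}\cap\rng{Q}$ and hence equality.

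The only subtle point is the norm argument, so I would state the underlying fact explicitly: for an orthogonal projection $P$ one has $\scalar{Pw}{Pw}=\scalar{Pw}{w}\le\|Pw\|\,\|w\|$ by $P^2=P$, $P^*=P$ and Cauchy--Schwarz, whence $\|Pw\|\le\|w\|$, and equality in Cauchy--Schwarz forces $w$ parallel to $Pw\in\rng{P}$, i.e.\ $w\in\rng{P}$. I do not expect any genuine obstacle here; the main care is simply in chaining the two norm contractions in the right order so that equality propagates back to give both $Qv=v$ and $Pv=v$. An alternative forward proof avoiding norms is also available: from $PQv=v$ apply $P$ to get $PQv=PPQv=Pv$ so $Pv=v$, placing $v\in\rng{P}$; substituting back, $PQv=v$ becomes $P(Qv)=v=Pv$, and a short self-adjointness computation of $\|Qv-v\|^2$ (using $Pv=v$ and $P^*=P$) shows $Qv=v$. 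I would present whichever is cleaner, most likely the norm-contraction argument for its symmetry.
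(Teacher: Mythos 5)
Your proof is correct, but the forward implication travels a different road than the paper's. You chain the contraction inequalities $\norm{v}=\norm{PQv}\le\norm{Qv}\le\norm{v}$ and use the equality case of ``orthogonal projections do not increase norm'' to conclude first $v\in\rng{Q}$ (so $Qv=v$) and then $v\in\rng{P}$. The paper instead observes that $v=P(Qv)\in\rng{P}$ is immediate, rewrites $PQv=v$ as $P(Qv-v)=0$, uses that $\nullsp{P}\perp\rng{P}\ni v$ to get $v^*(Qv-v)=0$, and then decomposes $v=x+y$ along $\rng{Q}\oplus\nullsp{Q}$ to force $\norm{y}=0$; so it establishes membership in the two ranges in the opposite order and never needs the equality case of Cauchy--Schwarz. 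Both arguments rest on the same geometric fact (Pythagoras for orthogonal projections, equivalently $\nullsp{P}\perp\rng{P}$), so the difference is one of packaging: your version is pleasantly symmetric and reusable as a general principle, while the paper's avoids the equality-case discussion (where your parallelism argument silently needs the trivial case $Pw=0$, better handled by $\norm{w}^2=\norm{Pw}^2+\norm{(I-P)w}^2$) and gets $v\in\rng{P}$ for free from idempotence. Your ``in particular'' step, identifying $\rng{PQ}$ with the fixed points of the idempotent $PQ$, is exactly the paper's one-line conclusion, just spelled out as a double inclusion; and the alternative forward proof you sketch at the end (apply $P$, then a self-adjointness computation of $\norm{Qv-v}^2$) is essentially the paper's argument.
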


\begin{proof}
    If $v\in\rng{P}\cap\rng{Q}$ then
    \begin{equation}\label{eq:prod_of_proj}
        PQv=Pv=v.    
    \end{equation}
    Assume now $PQv=v$. Then $v\in\rng{P}$. Factor by $P$ the left part of the Equation~(\ref{eq:prod_of_proj}) to get
    \[P(Qv-v)=0.\]
    That is $Qv-v\in \nullsp{P}$ and the null space is orthogonal to $\rng{P}$ therefore
    \begin{equation}\label{eq:perp}
        v^*(Qv-v)=0.    
    \end{equation}
    
    Let $v=x+y$ be the orthogonal decomposition with respect to $\rng{Q}\oplus\nullsp{Q}$. Then
     and $Qv-v=-y$ and the Equation~(\ref{eq:perp}) becomes
    \[(x^*+y^*)(-y)=0\quad\Longrightarrow\quad \norm{y}_2=0,\]
    and therefore $v\in\rng{Q}$.
    
    Finally, if $PQ$ is a projection then $PQv=v$ if and only if $v\in\rng{PQ}$.
\end{proof}

\begin{lemma}\label{lem:12_implies_projections} 
    Let $A\in\bK^{m\times n}$ and $B\in\bK^{n\times k}$ be two matrices. If
    \[\pinv{B}\pinv{A}\in \minv{(AB)}{1,2},\]
    then
    $B\pinv{B}\pinv{A}A$ and $\pinv{A}AB\pinv{B}$ are projections onto $\rng{A^*}\cap\rng{B}$.
\end{lemma}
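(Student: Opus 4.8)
The plan is to recognize both matrices as products of the two orthogonal projections $\pinv{A}A$ (onto $\rng{A^*}$) and $B\pinv{B}$ (onto $\rng{B}$), taken in the two possible orders, and then to reduce everything to Lemma~\ref{lem:prod_of_proj}. Write $X=\pinv{B}\pinv{A}$, and recall that $\pinv{A}A$ and $B\pinv{B}$ are Hermitian idempotents with $\rng{\pinv{A}A}=\rng{A^*}$ and $\rng{B\pinv{B}}=\rng{B}$. Then $B\pinv{B}\pinv{A}A=BXA$, while $\pinv{A}AB\pinv{B}=\brac*{\pinv{A}A}\brac*{B\pinv{B}}$, and since both factors are Hermitian these two matrices are adjoints of one another: $\brac*{\pinv{A}AB\pinv{B}}^*=B\pinv{B}\pinv{A}A$. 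Consequently it suffices to prove that one of them is idempotent, since the adjoint of an idempotent is again idempotent.

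The first step is to establish idempotency of $BXA=B\pinv{B}\pinv{A}A$ using condition~(\ref{eq:Penrose_cond2}) for $AB$, namely $XABX=X$. Indeed $\brac*{BXA}^2=BXABXA=B\brac*{XABX}A=BXA$, so $B\pinv{B}\pinv{A}A$ is a projection, and taking adjoints shows $\pinv{A}AB\pinv{B}$ is a projection as well. (Alternatively, idempotency of $\pinv{A}AB\pinv{B}$ follows directly from condition~(\ref{eq:Penrose_cond1}): expanding $\brac*{\pinv{A}AB\pinv{B}}^2=\pinv{A}A\brac*{BXAB}\pinv{B}$ and noting that $ABXAB=AB$ forces the columns of $BXAB-B$ into $\nullsp{A}$, which is annihilated by the orthogonal projection $\pinv{A}A$, so that $\pinv{A}A\,BXAB=\pinv{A}AB$.)

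The final step is to identify the range. Since $\pinv{A}A$ and $B\pinv{B}$ are orthogonal projections and both products $B\pinv{B}\pinv{A}A$ and $\pinv{A}AB\pinv{B}$ are now known to be idempotent, Lemma~\ref{lem:prod_of_proj} applies in both orders and gives $\rng{B\pinv{B}\pinv{A}A}=\rng{\pinv{A}AB\pinv{B}}=\rng{B}\cap\rng{A^*}$. Hence both matrices are projections onto $\rng{A^*}\cap\rng{B}$, as claimed.

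The main obstacle is the idempotency step: the key is to spot the factorization $B\pinv{B}\pinv{A}A=BXA$ so that the middle block $XABX$ appears and condition~(\ref{eq:Penrose_cond2}) can be applied verbatim. Once idempotency is in hand, the range computation is immediate from Lemma~\ref{lem:prod_of_proj}, and the adjoint symmetry between the two products removes any need to treat them separately.
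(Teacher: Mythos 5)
Your proof is correct and follows essentially the same route as the paper: both arguments establish idempotency of the two products from the Penrose conditions for $\pinv{B}\pinv{A}$ and then identify the common range via Lemma~\ref{lem:prod_of_proj}. The only variation is that the paper verifies idempotency of $\pinv{A}AB\pinv{B}$ by a second direct computation using condition~(\ref{eq:Penrose_cond1}), whereas your adjoint identity $\brac*{B\pinv{B}\pinv{A}A}^*=\pinv{A}AB\pinv{B}$ gets it for free from the first computation --- a nice touch, since it shows the conclusion already follows from $\pinv{B}\pinv{A}\in\minv{(AB)}{2}$ alone.
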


\begin{proof}
Matrices $B\pinv{B}\pinv{A}A$ and $\pinv{A}AB\pinv{B}$ are idempotent
    \[\brac*{B\pinv{B}\pinv{A}A}\brac*{B\pinv{B}\pinv{A}A}=B\brac*{\pinv{B}\pinv{A}AB\pinv{B}\pinv{A}}A=B\pinv{B}\pinv{A}A,\]
     \[\brac*{\pinv{A}AB\pinv{B}}\brac*{\pinv{A}AB\pinv{B}}=\pinv{A}\brac*{AB\pinv{B}\pinv{A}AB}\pinv{B}=\pinv{A}AB\pinv{B}.\]
     Since $\pinv{A}A$ is the orthogonal projection onto $\rng{A^*}$ and $B\pinv{B}$ is the orthogonal projection onto $\rng{B}$, by Lemma~\ref{lem:prod_of_proj} they both project onto $\rng{A^*}\cap\rng{B}$.
\end{proof}

\begin{lemma}\label{lem:sing_values_1_of_proj}
    Let $P\in\bK^{n\times n}$ be a matrix of a projection, i.e., $P^2=P$. Let $P=U\Sigma V^*$ 
    be any SVD decomposition of $P$ and let $\sigma_1\ge \ldots\ge\sigma_r>0$ denote all singular values of $P$. Then
    \[\sigma_1=\ldots=\sigma_r=1\quad \Longleftrightarrow\quad P=P^*,\]
    i.e., when $P$ is an orthogonal projection.
\end{lemma}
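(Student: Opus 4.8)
The plan is to prove the two implications separately, treating the reverse direction as routine and the forward direction $\sigma_1=\ldots=\sigma_r=1\Rightarrow P=P^*$ as the substantive one. For the reverse implication, I would start from $P=P^*$ together with $P^2=P$, so that $P^*P=P^2=P$ and the singular values of $P$ are the nonnegative square roots of the eigenvalues of $P$. Any eigenvalue $\lambda$ of the idempotent $P$ satisfies $\lambda^2=\lambda$, hence $\lambda\in\{0,1\}$; therefore every singular value lies in $\{0,1\}$ and the nonzero ones all equal $1$.

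For the forward implication, the hypothesis $\sigma_1=\ldots=\sigma_r=1$ forces $\Sigma=\begin{bmatrix}I_r & 0\\ 0 & 0\end{bmatrix}$, so writing $U_1=U_{:,1:r}$ and $V_1=V_{:,1:r}$ (both with orthonormal columns, $U_1^*U_1=V_1^*V_1=I_r$) gives $P=U_1V_1^*$. The key step is then to exploit idempotency: from $U_1V_1^*U_1V_1^*=U_1V_1^*$, multiplying on the left by $U_1^*$ and on the right by $V_1$ and using the orthonormality relations collapses everything to the single clean identity $V_1^*U_1=I_r$.

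The crux is to deduce $U_1=V_1$ from the three relations $U_1^*U_1=V_1^*V_1=V_1^*U_1=I_r$. The quickest route is a Frobenius-norm computation: since $U_1^*V_1=(V_1^*U_1)^*=I_r$ as well, expanding $\|U_1-V_1\|_F^2=\mathrm{tr}(U_1^*U_1-U_1^*V_1-V_1^*U_1+V_1^*V_1)$ yields $\mathrm{tr}(I_r-I_r-I_r+I_r)=0$, which forces $U_1=V_1$. Hence $P=U_1U_1^*$, which is manifestly Hermitian, completing the argument.

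I expect the main obstacle to be precisely the forward direction — seeing that idempotency combined with all nonzero singular values equal to one pins down the biorthogonality $V_1^*U_1=I_r$. Once that identity is in hand, the passage to $U_1=V_1$ is a short orthogonality argument and the Hermitian conclusion is immediate, so the rest is bookkeeping.
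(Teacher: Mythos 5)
Your proof is correct and follows essentially the same route as the paper: reduce to $P=U_1V_1^*$, use idempotency with the orthonormality relations to extract $V_1^*U_1=I_r$, and conclude $U_1=V_1$ so that $P=U_1U_1^*$ is Hermitian. The only cosmetic difference is the last step, where you use the Frobenius-norm identity $\norm{U_1-V_1}_F^2=0$ while the paper argues column-by-column via the equality case of Cauchy--Schwarz; both are equivalent one-line arguments.
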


\begin{proof}
    If $P$ is an orthogonal projection then it diagonalizes in an orthonormal basis, i.e.,
    \[P=Q\left[\begin{array}{c|c}
   I  & 0 \\ \hline
   0  & 0
\end{array}\right]Q^*,\]
with eigenvalues $0$ and $1$. By the uniqueness of singular values $\sigma_1=\ldots=\sigma_r=1$.

Let $P=U\Sigma V^*$ be a reduced SVD of $P$. If $\sigma_1=\ldots=\sigma_r=1$ then
\[P^2=UV^*UV^*=P=UV^*.\]
Since $U^*U=I,V^*V=I$, by multiplying the above equation by $U^*$ on the left and by $V$ on the right, we have that $V^*U=I$. Therefore
\[U=V\quad\text{and}\quad P=VV^*\quad\text{is an orthogonal projection}.\]
We have used $v^*u=\norm{v}_2\norm{u}_2\cos \angle(u,v)=1$, if  $\norm{v}_2=\norm{u}_2=1$ then $\cos \angle(u,v)=1$ and $u=v$.
\end{proof}

 \begin{theorem}\label{thm:Greville12}
    \[\pinv{B}\pinv{A}\in \minv{(AB)}{1,2}\Longleftrightarrow \rng{\pinv{A}AB}\subset\rng{B}\text{ and }\rng{B\pinv{B}\pinv{A}}\subset\rng{\pinv{A}}.\] 
 \end{theorem}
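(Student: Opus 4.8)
The plan is to introduce the two orthogonal projections $P=\pinv{A}A$ and $Q=B\pinv{B}$, which project onto $\rng{A^*}=\rng{\pinv{A}}$ and onto $\rng{B}$ respectively (cf.\ Remark~\ref{rem:bases_of_im_ker}), and to read the right-hand side as two invariance statements: $\rng{\pinv{A}AB}\subset\rng{B}$ says $\rng{PB}\subset\rng{B}$, i.e.\ $\rng{B}$ is $P$-invariant, while $\rng{B\pinv{B}\pinv{A}}\subset\rng{\pinv{A}}$ says $\rng{Q\pinv{A}}\subset\rng{A^*}$, i.e.\ $\rng{A^*}$ is $Q$-invariant. With $G=\pinv{B}\pinv{A}$, the two Penrose conditions defining $\minv{(AB)}{1,2}$ are $(AB)G(AB)=AB$ and $G(AB)G=G$, and I would handle them one at a time.

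For the implication $(\Longleftarrow)$ I would argue directly. Collecting the inner factors as projections gives $(AB)G(AB)=A\,Q\,P\,B$ and $G(AB)G=\pinv{B}\,P\,Q\,\pinv{A}$. The first range condition is the matrix identity $QPB=PB$, so, using $AP=A\pinv{A}A=A$, it collapses $(AB)G(AB)=AQPB=APB=AB$; the second range condition is the identity $PQ\pinv{A}=Q\pinv{A}$, so, using $\pinv{B}Q=\pinv{B}B\pinv{B}=\pinv{B}$, it collapses $G(AB)G=\pinv{B}PQ\pinv{A}=\pinv{B}Q\pinv{A}=\pinv{B}\pinv{A}=G$. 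Thus the first range condition is exactly what makes the $\{1\}$-equation hold, and the second exactly what makes the $\{2\}$-equation hold.

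For the implication $(\Longrightarrow)$, which I expect to be the main obstacle, expanding the two Penrose equations in isolation is too weak: $(AB)G(AB)=AB$ only yields $A(I-Q)PB=0$, i.e.\ $\rng{(I-Q)PB}\subset\nullsp{A}$, rather than $(I-Q)PB=0$. To upgrade this I would invoke Lemma~\ref{lem:12_implies_projections}: since $G\in\minv{(AB)}{1,2}$, both $PQ=\pinv{A}AB\pinv{B}$ and $QP=B\pinv{B}\pinv{A}A$ are idempotent with range $\rng{A^*}\cap\rng{B}$. Writing $E=PQ$, we have $E^*=QP$, so $E$ is an idempotent with $\rng{E}=\rng{A^*}\cap\rng{B}=\rng{E^*}$. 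An idempotent whose range equals the range of its adjoint is an orthogonal projection (indeed $\rng{E^*}=\nullsp{E}^{\perp}$, so $\rng{E}=\nullsp{E}^{\perp}$ and the splitting $\bK^n=\rng{E}\oplus\nullsp{E}$ is orthogonal), hence $E=E^*$, that is $PQ=QP$. Two commuting orthogonal projections leave each other's ranges invariant, which returns $\rng{B}$ being $P$-invariant and $\rng{A^*}$ being $Q$-invariant, i.e.\ both range conditions.

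I would close with the observation that this argument in fact shows each of the two range conditions is individually equivalent to $PQ=QP$, hence to the other; this is why Table~\ref{tab:similar_equivalent} may list them with an ``or'' although the theorem is phrased with ``and''. The crux of the whole proof is the $(\Longrightarrow)$ step, where reflexivity (the second Penrose equation), packaged through Lemma~\ref{lem:12_implies_projections}, is precisely what forces the idempotent $PQ$ to coincide with its adjoint $QP$.
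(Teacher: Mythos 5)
Your proof is correct and follows essentially the same route as the paper: your backward direction is the paper's computation (substituting the projection identities $B\pinv{B}\,\pinv{A}AB=\pinv{A}AB$ and $\pinv{A}A\,B\pinv{B}\pinv{A}=B\pinv{B}\pinv{A}$ into the two Penrose equations), and your forward direction rests on the same key input, Lemma~\ref{lem:12_implies_projections}. The only minor divergence is the endgame of the forward direction: the paper notes that the two equal-range idempotents $\pinv{A}AB\pinv{B}$ and $B\pinv{B}\pinv{A}A$ absorb each other and then cancels middle factors to read off the inclusions, whereas you upgrade $E=\pinv{A}AB\pinv{B}$ to an orthogonal projection via $\rng{E}=\rng{E^*}$ and pass through commutativity of $\pinv{A}A$ and $B\pinv{B}$ --- a fact the paper records separately as Corollary~\ref{cor:pAA_BpB_commute} and condition viii) of Theorem~\ref{thm:12_eqv}.
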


 \begin{proof}
     $(\Longleftarrow)$
     Since $\pinv{A}A$ is an orthogonal projection onto $\rng{\pinv{A}}=\rng{A^*}$ and $B\pinv{B}$ is an orthogonal projection onto $\rng{B}$ we have
     \begin{equation}\label{eq:proj2}
         \brac*{B\pinv{B}}\pinv{A}AB=\pinv{A}AB.
     \end{equation}
     \begin{equation}\label{eq:proj1}
         \brac*{\pinv{A}A}B\pinv{B}\pinv{A}=B\pinv{B}\pinv{A},
     \end{equation}
     Multiplying (\ref{eq:proj2}) by $A$ on the left gives
     \[AB\pinv{B}\pinv{A}AB=AB.\]
     Multiplying (\ref{eq:proj1}) by $\pinv{B}$ on the left gives
     \[\pinv{B}\pinv{A}AB\pinv{B}\pinv{A}=\pinv{B}\pinv{A},\]
     i.e., $\pinv{B}\pinv{A}\in \minv{(AB)}{1,2}$.\\
     $(\Longrightarrow)$ Let $P=\pinv{A}AB\pinv{B}$ and $Q=B\pinv{B}\pinv{A}A$.
     By Lemma~\ref{lem:12_implies_projections} matrices $P,Q$ are projections both onto $\rng{A^*}\cap\rng{B}$, that is $P^2=P,Q^2=Q$ and
     \[PQ=Q,\quad QP=P.\]
     Equivalently,
     \[\brac*{\pinv{A}AB\pinv{B}}\brac*{B\pinv{B}\pinv{A}A}=B\pinv{B}\pinv{A}A,\]
     \[\brac*{B\pinv{B}\pinv{A}A}\brac*{\pinv{A}AB\pinv{B}}=\pinv{A}AB\pinv{B},\]
     but those, after canceling the middle terms, imply the inclusions.
 \end{proof}

 \begin{corollary}\label{cor:pAA_BpB_commute}
     \[\rng{\pinv{A}AB}\subset\rng{B}\Longleftrightarrow\rng{B\pinv{B}\pinv{A}}\subset\rng{\pinv{A}}.\]
     In particular, if
     \[\pinv{B}\pinv{A}\in \minv{(AB)}{1,2}\] 
     then $\pinv{A}A$ and $B\pinv{B}$ are simultaneously diagonalizable hence they commute.

 \end{corollary}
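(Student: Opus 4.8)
The plan is to reduce both inclusions to invariance statements for a pair of orthogonal projections and then exploit self-adjointness through a short adjoint computation.

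First I would set $P=\pinv{A}A$ and $R=B\pinv{B}$, which are the orthogonal projections onto $\rng{\pinv{A}}=\rng{A^*}$ and onto $\rng{B}$, respectively (as recorded in the proof of Theorem~\ref{thm:Greville12}). Since $\pinv{A}AB=PB$ and $\rng{PB}=P\brac*{\rng{B}}$, the inclusion $\rng{\pinv{A}AB}\subset\rng{B}$ says exactly that $\rng{R}=\rng{B}$ is invariant under $P$, i.e. $RPR=PR$. Likewise $B\pinv{B}\pinv{A}=R\pinv{A}$ with $\rng{R\pinv{A}}=R\brac*{\rng{A^*}}$, so $\rng{B\pinv{B}\pinv{A}}\subset\rng{\pinv{A}}$ says that $\rng{P}=\rng{A^*}$ is invariant under $R$, i.e. $PRP=RP$.

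The key step is then the purely projection-theoretic fact that, for orthogonal (hence Hermitian) projections $P,R$, each of these invariance conditions is already equivalent to $PR=RP$. Indeed, taking the conjugate transpose of $RPR=PR$ and using $P^*=P$, $R^*=R$ yields $RPR=RP$, so that $PR=RPR=RP$; conversely $PR=RP$ gives $RPR=PR$ at once. Applying the same adjoint trick to $PRP=RP$ gives $PRP=PR$ and hence $PR=RP$ as well. Therefore both inclusions are equivalent to commutativity of $P$ and $R$, and in particular to each other, which is the asserted equivalence.

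For the \emph{in particular} claim I would invoke Theorem~\ref{thm:Greville12}: membership $\pinv{B}\pinv{A}\in\minv{(AB)}{1,2}$ is equivalent to both inclusions holding simultaneously, hence to $PR=RP$ by the previous paragraph. As $P$ and $R$ are commuting Hermitian matrices, the Spectral Theorem guarantees that they are simultaneously diagonalizable in an orthonormal basis, which delivers both conclusions of the statement. I do not expect a serious obstacle here; the only delicate point is the direction of the adjoint identities, ensuring that a \emph{single} invariance condition, rather than the two taken together, already forces commutativity, since this is precisely what makes the one-sided equivalence valid.
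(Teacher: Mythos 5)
Your proof is correct, but it follows a genuinely different route from the paper's. The paper argues spectrally, in the style of its Theorem~\ref{thm:necessary}: assuming $\rng{B\pinv{B}\pinv{A}}\subset\rng{\pinv{A}}$, it observes that $\rng{A^*}$ is an invariant subspace of the Hermitian projection $B\pinv{B}$, hence so is its orthogonal complement $\nullsp{A}$, and it constructs an orthonormal eigenbasis $v_1,\ldots,v_n$ of $B\pinv{B}$ adapted to $\bK^n=\rng{A^*}\oplus\nullsp{A}$; these vectors are automatically eigenvectors of $\pinv{A}A$ as well, the reverse inclusion is verified vector by vector (since $\pinv{A}Av_i=v_i$ or $0$), and commutativity is deduced \emph{from} simultaneous diagonalizability. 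You instead translate each inclusion into an operator identity, $RPR=PR$ respectively $PRP=RP$ with $P=\pinv{A}A$, $R=B\pinv{B}$, and use self-adjointness plus a one-line adjoint computation to show that each identity is \emph{separately} equivalent to $PR=RP$; for you commutativity comes first, and simultaneous diagonalizability then follows from the spectral theorem for commuting Hermitian matrices --- the reverse logical order of the paper. Your argument is shorter and coordinate-free, and it makes explicit the clean intermediate fact that either single inclusion alone forces the two projections to commute (the paper also obtains this, but only implicitly through the eigenbasis). The paper's argument, in exchange, produces the explicit common eigenbasis, which fits the constructive, SVD-oriented machinery used elsewhere (e.g., in Theorems~\ref{thm:necessary} and~\ref{thm:12_eqv}). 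Both proofs correctly invoke Theorem~\ref{thm:Greville12} for the ``in particular'' claim, so there is no gap in your proposal.
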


 \begin{proof}
     The proof is similar to the proof of Theorem~\ref{thm:necessary}. Assume that
     \[\rng{B\pinv{B}\pinv{A}}\subset\rng{\pinv{A}},\]
     therefore, subspace $\rng{A^*}=\rng{\pinv{A}}$ is an invariant subspace of the projection
     $B\pinv{B}$ and so is its orthogonal complement $\nullsp{A}$. Therefore there exists an orthonormal eigenbasis $v_1,\ldots,v_n$ of $\bK^n=\rng{A^*}\oplus\nullsp{A}$ for  $B\pinv{B}$ with  $v_1,\ldots,v_{r_A}\in\rng{A^*}$ and 
      $v_{r_A+1},\ldots,n\in\nullsp{A}$. Vectors from $\rng{A^*}$ or $\nullsp{A}$ are also eigenvectors for the projection $\pinv{A}A$. Moreover, eigenvectors $v_i$ corresponding to the eigenvalue $1$ of $B\pinv{B}$ form a basis of $\rng{B}$.  To see that
     \[\rng{\pinv{A}AB}\subset\rng{B},\]
     it is enough to show that $\pinv{A}Av_i\in\rng{B}$ for any $v_i\in\rng{B}$. But for any $i=1,\ldots,n$
     \[\pinv{A}Av_i=v_i\text{ or }\pinv{A}Av_i=0.\]
     The converse follows in a similar way.    
 \end{proof}

\begin{theorem}\label{thm:12_eqv}
 Let $A\in\bK^{m\times n}$ and $B\in\bK^{n\times k}$ be two matrices. 
    The following conditions are equivalent
    \begin{enumerate}[i)]
        \item $\pinv{B}\pinv{A}\in \minv{(AB)}{1,2}$,
        \item $AB\in \minv{\brac*{\pinv{B}\pinv{A}}}{1,2}$,
        \item $\rng{\pinv{A}AB}\subset\rng{B}$,
        \item $\rng{B\pinv{B}\pinv{A}}\subset\rng{\pinv{A}}$
        \item the principal angles between $\rng{A^*}$ and $\rng{B}$ belong to the set $\bracc*{0,\frac{\pi}{2}}$,
        \item $\pinv{A}AB\pinv{B}$ is an orthogonal projection,
        \item $B\pinv{B}\pinv{A}A$ is an orthogonal projection,
        \item matrices $\pinv{A}A,B\pinv{B}$ commute, 
        \item matrices $\pinv{A}A,B\pinv{B}$ are simultaneously diagonalizable,
        \item $\pinv{A}AB\pinv{B}=2\pinv{A}A\pinv{\brac*{\pinv{A}A+B\pinv{B}}}B\pinv{B},$
        \item $\pinv{A}AB\pinv{B}\pinv{A}AB\pinv{B}=B\pinv{B}\pinv{A}A$,
        \item eigenvalues of $\pinv{A}AB\pinv{B}\pinv{A}A$ belong to the set $\{0,1\}$.
    \end{enumerate}
\end{theorem}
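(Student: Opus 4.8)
The plan is to designate condition viii) --- that the orthogonal projections $P=\pinv{A}A$ (onto $\rng{A^*}$) and $Q=B\pinv{B}$ (onto $\rng{B}$) commute --- as a central hub and prove every other condition equivalent to it. Two equivalences come for free. First, i)$\Leftrightarrow$ii), because the pair of defining equations of a $\{1,2\}$-inverse, $(AB)X(AB)=AB$ and $X(AB)X=X$ with $X=\pinv{B}\pinv{A}$, is symmetric in $AB$ and $X$. Second, i)$\Leftrightarrow$iii)$\Leftrightarrow$iv) is exactly Theorem~\ref{thm:Greville12} together with Corollary~\ref{cor:pAA_BpB_commute}. To bridge this cluster to the hub I would use the chain i)$\Rightarrow$viii)$\Rightarrow$iii): the first implication is the last assertion of Corollary~\ref{cor:pAA_BpB_commute}, and for the second, if $PQ=QP$ then $\rng{\pinv{A}AB}=P\,\rng{B}=\rng{PQ}=\rng{QP}\subset\rng{Q}=\rng{B}$, which is iii). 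Since iii)$\Rightarrow$i), the loop closes and i)$\Leftrightarrow$viii).

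Next I would dispatch the projection-theoretic conditions against the hub. Since $\pinv{A}AB\pinv{B}=PQ$ and $(PQ)^*=QP$, the product $PQ$ is an orthogonal projection precisely when $PQ=QP$, giving vi)$\Leftrightarrow$viii); exchanging the roles of $P$ and $Q$ gives vii)$\Leftrightarrow$viii). Condition ix)$\Leftrightarrow$viii) is the standard fact that two Hermitian (hence diagonalizable) matrices commute if and only if they are simultaneously diagonalizable.

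For the geometric and spectral conditions v) and xii) I would fix matrices $V,U$ with orthonormal columns spanning $\rng{A^*}$ and $\rng{B}$, so that $P=VV^*$, $Q=UU^*$, and compute $\pinv{A}AB\pinv{B}\pinv{A}A=PQP=V(V^*U)(V^*U)^*V^*$. The nonzero eigenvalues of $PQP$ are therefore the squared singular values of $V^*U$, i.e. the squared cosines of the principal angles between $\rng{A^*}$ and $\rng{B}$. Hence xii) (eigenvalues in $\{0,1\}$) holds iff every such cosine lies in $\{0,1\}$, which is exactly v) (angles in $\bracc*{0,\frac{\pi}{2}}$). To tie v) to the hub I would invoke the canonical decomposition of a pair of orthogonal projections: angles strictly between $0$ and $\frac{\pi}{2}$ correspond to genuine $2\times2$ blocks, so their absence is equivalent to $P$ and $Q$ being simultaneously diagonalizable, i.e. to ix)/viii).

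Finally, for the two algebraic identities I would argue as follows. For xi), $PQPQ=QP$: the direction viii)$\Rightarrow$xi) is immediate substitution, while for xi)$\Rightarrow$viii) I left-multiply by $P$ to get $PQPQ=PQP$, hence $QP=PQP$, and then take adjoints (using $P^*=P$, $Q^*=Q$) to get $PQ=PQP$, so $QP=PQ$. Condition x), $PQ=2P\pinv{(P+Q)}Q$, is the step I expect to be the main obstacle, since it involves the pseudoinverse of $P+Q$; here I would again pass to the canonical two-projection decomposition, in which $P+Q$ is block diagonal and so is $\pinv{(P+Q)}$. On each $1\times1$ common-eigenvector block the identity is verified by the four cases $(P,Q)\in\{0,1\}^2$, whereas on a genuine $2\times2$ block one computes that the right-hand side $2P\pinv{(P+Q)}Q$ vanishes while $PQ\neq0$; thus x) holds iff there are no such blocks, i.e. iff viii) holds. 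Collecting all these equivalences with the hub completes the proof.
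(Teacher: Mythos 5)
Your proposal is correct, and although it shares the paper's overall architecture---condition viii) as the hub, i)$\Leftrightarrow$ii) by symmetry, i)$\Leftrightarrow$iii)$\Leftrightarrow$iv) via Theorem~\ref{thm:Greville12} and Corollary~\ref{cor:pAA_BpB_commute}, the commuting-projections criterion for vi) and vii), and essentially the same algebra for xi)---the technical core is genuinely different. Writing $P=\pinv{A}A$ and $Q=B\pinv{B}$, the paper disposes of x) by citing the Anderson--Duffin parallel-sum theorem and of v)$\Leftrightarrow$vi) via Lemma~\ref{lem:sing_values_1_of_proj}; you instead run v), x) and xii) through the canonical (Halmos/CS) decomposition of a pair of orthogonal projections into $1\times1$ and $2\times2$ blocks, checking x) by direct block computation (the right-hand side $2P\pinv{\brac*{P+Q}}Q$ vanishes on every genuine $2\times2$ block while $PQ$ does not, and both sides agree on all $1\times1$ blocks) and reading v)$\Leftrightarrow$viii)/ix) off from the absence of genuine blocks. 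This buys uniformity (every condition reduces to ``no genuine $2\times2$ blocks'') and self-containedness, and it is actually tighter than the paper at one point: Lemma~\ref{lem:sing_values_1_of_proj} presupposes idempotency, so the paper's direction v)$\Rightarrow$vi) is not fully justified as written, whereas you establish commutativity first and only then conclude that $PQ$ is an orthogonal projection. You also supply explicitly the return edge viii)$\Rightarrow$iii) (via $\rng{\pinv{A}AB}=\rng{PQ}=\rng{QP}\subset\rng{B}$), which the paper's implication graph needs for strong connectivity but leaves buried inside the proof of Corollary~\ref{cor:pAA_BpB_commute}; and your xi)$\Rightarrow$viii) (left-multiply by $P$, then take adjoints) is a touch cleaner than the paper's detour through vii). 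The only cost is invoking the two-projection canonical form, an external but standard result of the same caliber as the parallel-sum theorem the paper cites.
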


 \begin{proof}
 Conditions $i)$ and $ii)$ are equivalent by symmetry. By Theorem~\ref{thm:Greville12} and Corollary~\ref{cor:pAA_BpB_commute} condition $i)$ is equivalent to conditions $iii)$ and $iv)$.
 Similarly condition $i)$ implies conditions $viii)$ and $ix)$. Since $\pinv{A}A,B\pinv{B}$ are diagonalizable conditions $viii)$ and $ix)$ are equivalent. Condition $viii)$ implies $vi)$ and $vii)$. Product of two orthogonal projections is an orthogonal projection if and only if they commute, therefore $vi)$ or $vii)$ imply $viii)$.

Let $A=U_A\Sigma_A V_A^*$ and $B=U_B\Sigma_B V_B^*$ be a reduced SVD decompositions of $A$ and $B$. Then the singular values (including $0$) of $\pinv{A}AB\pinv{B}=V_AV_A^*U_BU_B^*$ are the same as of matrix $V_A^*U_BU$.
By Lemma~\ref{lem:sing_values_1_of_proj}, conditions $v)$ and $vi)$ are equivalent.

By \cite[Theorem~8]{AndersonDuffin} the right hand side of $x)$ is an orthogonal projection onto $\rng{A^*}\cap\rng{B}$ in terms of parallel sum. By the last claim of Lemma~\ref{lem:prod_of_proj},
conditions $vi)$ and $x)$ are equivalent.

Condition $vii)$ implies trivially condition $xi)$. Multiplying condition $xi)$ by $B\pinv{B}$ on the left, we can rewrite it as
    \begin{equation}\label{eq:12_equiv_t1}
    \brac*{B\pinv{B}\pinv{A}AB\pinv{B}}\brac*{B\pinv{B}\pinv{A}AB\pinv{B}}=B\pinv{B}\pinv{A}A.
    \end{equation}
    Since multiplying (\ref{eq:12_equiv_t1}) by $B\pinv{B}$ on the right does not change anything on the left hand side, we get that $B\pinv{B}\pinv{A}A=B\pinv{B}\pinv{A}AB\pinv{B}$ is a projection, i.e, condition $vii)$.
 By a similar argument and  Lemma~\ref{lem:sing_values_1_of_proj}, condition $xi)$ implies condition $xii)$.
 Eigenvalues of $\pinv{A}AB\pinv{B}\pinv{A}A$ are squares of singular values of $\pinv{A}AB\pinv{B}$, which in turn are cosines of the prinicipal angles between $\rng{A^*}\cap\rng{B}$, therefore $xii)$ implies $v)$.
 \end{proof}

\begin{corollary}
    If $\pinv{B}\pinv{A}\in \minv{(AB)}{1,2}$ then
    the principal angles $\alpha_i$ between $\rng{A^*}$ and $\rng{B}$ belong to the set $\bracc*{0,\frac{\pi}{2}}$ therefore
    \[\rank AB=\dim(\rng{A^*}\cap\rng{B})=\#\{\alpha_i\mid \alpha_i=0\}.\]
\end{corollary}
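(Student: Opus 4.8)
The plan is to read the statement as an assembly of results already proved in this section rather than a fresh computation. First I would dispose of the qualitative claim: the hypothesis $\pinv{B}\pinv{A}\in\minv{(AB)}{1,2}$ is exactly condition $i)$ of Theorem~\ref{thm:12_eqv}, which is equivalent to condition $v)$, so the principal angles $\alpha_i$ between $\rng{A^*}$ and $\rng{B}$ all lie in $\bracc*{0,\frac{\pi}{2}}$. What then remains is the pair of equalities $\rank AB=\dim\brac*{\rng{A^*}\cap\rng{B}}=\#\{\alpha_i\mid\alpha_i=0\}$, and my strategy for both is to route everything through the matrix $P=\pinv{A}AB\pinv{B}$.

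For the rank equality I would set $P=\pinv{A}AB\pinv{B}$ and invoke Lemma~\ref{lem:12_implies_projections}, which under the hypothesis makes $P$ the orthogonal projection \emph{onto} $\rng{A^*}\cap\rng{B}$; hence $\rng{P}=\rng{A^*}\cap\rng{B}$ and $\rank P=\dim\brac*{\rng{A^*}\cap\rng{B}}$. The task is then to identify $\rank P$ with $\rank AB$ by pure range bookkeeping. Writing a reduced SVD $A=U_A\Sigma_A V_A^*$, the factor $U_A\Sigma_A$ has full column rank and $\pinv{A}A=V_AV_A^*$ with $V_A$ of full column rank, so $\rank AB=\rank\brac*{V_A^*B}=\rank\brac*{\pinv{A}AB}$. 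Since $B\pinv{B}$ projects onto $\rng{B}$, one gets $\rng{\pinv{A}AB\pinv{B}}=\pinv{A}A\brac*{\rng{B}}=\rng{\pinv{A}AB}$, whence $\rank P=\rank\brac*{\pinv{A}AB}=\rank AB$. Combining the two displays yields $\rank AB=\dim\brac*{\rng{A^*}\cap\rng{B}}$.

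For the count of vanishing angles I would reuse the observation from the proof of Theorem~\ref{thm:12_eqv} that $P=V_AV_A^*U_BU_B^*$ has the same nonzero singular values as $V_A^*U_B$, whose singular values are precisely the cosines $\cos\alpha_i$. Because $P$ is an orthogonal projection, all of its nonzero singular values equal $1$ and their multiplicity is $\rank P$ (this is the content of Lemma~\ref{lem:sing_values_1_of_proj}); a singular value of $V_A^*U_B$ equals $1$ exactly when $\cos\alpha_i=1$, i.e. $\alpha_i=0$. Therefore $\#\{\alpha_i\mid\alpha_i=0\}=\rank P=\dim\brac*{\rng{A^*}\cap\rng{B}}$, closing the chain. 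The steps are individually routine, but the point that genuinely requires care — and the only place the hypothesis is truly spent — is the rank equality: it is \emph{false} in general (a $1\times 2$ matrix $A$ and a $2\times 1$ matrix $B$ whose ranges meet $\rng{A^*}$ only in $0$ can still have $\rank AB=1$), so I must not argue that $P$ merely has the right rank but feed the argument through Lemma~\ref{lem:12_implies_projections}, that $P$ projects onto the intersection.
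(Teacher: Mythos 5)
Your proposal is correct and matches the paper's intent: the corollary is stated without a separate proof precisely because it is meant to be read off from Theorem~\ref{thm:12_eqv} (equivalence of $i)$ and $v)$ for the angle claim) together with Lemma~\ref{lem:12_implies_projections} and the singular-value identification of $\pinv{A}AB\pinv{B}$ with $V_A^*U_B$, which are exactly the ingredients you assemble. Your closing caution — that $\rank AB=\dim\brac*{\rng{A^*}\cap\rng{B}}$ genuinely requires the projection-onto-the-intersection statement and fails without the hypothesis — is also well placed, as the paper's own introductory example $A=\bracs*{1\ \ 1}$, $B=\bracs*{1\ \ 0}^*$ shows.
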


\begin{corollary}\label{cor:other_12ROLs}
Since $\rng{A^*}=\rng{A^*A}=\rng{\pinv{A}A}$ and $\rng{B}=\rng{BB^*}=\rng{B\pinv{B}}$
    \[\pinv{B}\pinv{A}\in \minv{(AB)}{1,2}\Longleftrightarrow \pinv{\brac*{BB^*}}\pinv{\brac*{A^*A}}\in \minv{(A^*ABB^*)}{1,2} \Longleftrightarrow \pinv{\brac*{B\pinv{B}}}\pinv{\brac*{\pinv{A}A}}\in \minv{(\pinv{A}AB\pinv{B})}{1,2} .\]
\end{corollary}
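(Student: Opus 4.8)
The plan is to reduce each of the three $\{1,2\}$-inverse memberships to one and the same geometric criterion supplied by Theorem~\ref{thm:12_eqv}, namely the equivalence of its conditions i) and v): the relation $\pinv{B}\pinv{A}\in\minv{(AB)}{1,2}$ holds if and only if the principal angles between $\rng{A^*}$ and $\rng{B}$ lie in $\bracc*{0,\frac{\pi}{2}}$. The crucial point is that these principal angles are an invariant of the ordered pair of subspaces $\rng{A^*},\rng{B}$ alone; hence if I replace $A$ and $B$ by matrices whose relevant column spaces are still $\rng{A^*}$ and $\rng{B}$, condition v) is word-for-word unchanged.

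First I would apply Theorem~\ref{thm:12_eqv} to the pair $(A^*A,\,BB^*)$, both of which are $n\times n$, so that the product $(A^*A)(BB^*)$ is well defined. As $A^*A$ and $BB^*$ are Hermitian, the range identities recorded in the statement give $\rng{(A^*A)^*}=\rng{A^*A}=\rng{A^*}$ and $\rng{BB^*}=\rng{B}$. Thus condition v) for $(A^*A,BB^*)$ is literally the assertion that the principal angles between $\rng{A^*}$ and $\rng{B}$ lie in $\bracc*{0,\frac{\pi}{2}}$, which is condition v) for $(A,B)$. Invoking the i)$\Longleftrightarrow$v) equivalence on both sides then yields the first claimed equivalence. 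Repeating the argument verbatim for the pair $(\pinv{A}A,\,B\pinv{B})$---again $n\times n$, and Hermitian since they are orthogonal projections---uses $\rng{(\pinv{A}A)^*}=\rng{\pinv{A}A}=\rng{\pinv{A}}=\rng{A^*}$ together with $\rng{B\pinv{B}}=\rng{B}$ to produce the same angle condition, and so gives the second equivalence.

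I expect no genuine obstacle beyond bookkeeping. The only things to verify are size compatibility (all substituted matrices are $n\times n$, so the products in question exist) and the three pairs of range identities, each of which is immediate from Hermiticity together with the standard facts $\rng{A^*A}=\rng{A^*}$ and $\rng{\pinv{A}A}=\rng{A^*}$ and their $B$-analogues. Once these are in hand, the corollary is exactly the observation that all three $\{1,2\}$-inverse conditions are governed by the same set of principal angles between $\rng{A^*}$ and $\rng{B}$.
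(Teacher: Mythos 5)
Your proposal is correct and takes essentially the same route as the paper: the corollary is stated there without a separate proof, the ``Since'' clause supplying exactly the range identities you check, and the intended argument is precisely that the criteria of Theorem~\ref{thm:12_eqv} (most transparently the principal-angle condition v)) depend only on the pair of subspaces $\brac*{\rng{A^*},\rng{B}}$, which is unchanged under the substitutions $(A,B)\mapsto(A^*A,BB^*)$ and $(A,B)\mapsto(\pinv{A}A,B\pinv{B})$. Your explicit verification of size compatibility and of $\rng{(A^*A)^*}=\rng{A^*}$, $\rng{(\pinv{A}A)^*}=\rng{A^*}$, $\rng{BB^*}=\rng{B}$, $\rng{B\pinv{B}}=\rng{B}$ is exactly the bookkeeping the paper leaves implicit.
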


The condition with principal angles yields an easy sufficient condition.

 \begin{corollary}
     If $\rng{A^*}\subset\rng{B}$ or $\rng{B}\subset\rng{A^*}$ then
     $\pinv{B}\pinv{A}\in \minv{(AB)}{1,2}$.
 \end{corollary}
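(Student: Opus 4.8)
The plan is to reduce the statement to condition $v)$ of Theorem~\ref{thm:12_eqv}, namely that every principal angle between $\rng{A^*}$ and $\rng{B}$ lies in $\bracc*{0,\frac{\pi}{2}}$; under either inclusion I expect to show these angles are all $0$ (together with possibly some angles equal to $\frac{\pi}{2}$ arising purely from the dimension gap). First I would treat $\rng{A^*}\subset\rng{B}$. Choose a matrix $V$ with orthonormal columns spanning $\rng{A^*}$ and extend it to a matrix $\bracs*{V\;\; W}$ with orthonormal columns spanning $\rng{B}$, where the columns of $W$ form an orthonormal basis of the orthogonal complement of $\rng{A^*}$ inside $\rng{B}$. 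By orthonormality $V^*\bracs*{V\;\; W}=\bracs*{I\;\; 0}$, whose nonzero singular values all equal $1$. By the definition of principal angles every cosine is then either $1$ or $0$, so every principal angle equals $0$ or $\frac{\pi}{2}$, confirming condition $v)$.

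The case $\rng{B}\subset\rng{A^*}$ is symmetric, since the nonzero singular values of $V^*U$ and $U^*V$ agree and hence the principal angles are independent of the order of the two subspaces; interchanging the roles of $\rng{A^*}$ and $\rng{B}$ yields the same conclusion. In either case condition $v)$ of Theorem~\ref{thm:12_eqv} holds, and the equivalence $v)\Leftrightarrow i)$ gives $\pinv{B}\pinv{A}\in\minv{(AB)}{1,2}$.

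I do not anticipate any genuine obstacle, as this is a direct specialization of Theorem~\ref{thm:12_eqv}; the only point needing care is the bookkeeping in the definition of principal angles, i.e. confirming that a subspace inclusion forces all the relevant cosines to equal $1$. As an alternative avoiding principal angles one may argue straight from conditions $iii)$ and $iv)$: when $\rng{A^*}\subset\rng{B}$ one has $\rng{\pinv{A}AB}\subset\rng{\pinv{A}A}=\rng{A^*}\subset\rng{B}$, which is condition $iii)$; and when $\rng{B}\subset\rng{A^*}$ one has $\rng{B\pinv{B}\pinv{A}}\subset\rng{B\pinv{B}}=\rng{B}\subset\rng{A^*}=\rng{\pinv{A}}$, which is condition $iv)$. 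Either route closes the proof.
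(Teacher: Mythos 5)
Your proof is correct and takes essentially the same approach as the paper: the corollary appears there without a written proof, introduced by the remark that ``the condition with principal angles yields an easy sufficient condition,'' i.e.\ precisely your main argument that an inclusion $\rng{A^*}\subset\rng{B}$ (or the reverse) forces every cosine of a principal angle to be $1$ or $0$, so condition $v)$ of Theorem~\ref{thm:12_eqv} applies. Your alternative route via conditions $iii)$ and $iv)$ of that theorem is also valid and equally immediate, but the principal-angle argument is the one the paper intends.
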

This is a counterpart of Corollary~\ref{cor:ROL_AB_implies_possible_ROLs}.
 \begin{corollary}\label{cor:12ROL_AB_implies_possible_12ROLs}
     Let $A\in\bK^{m\times n}$ and $B\in\bK^{n\times k}$ be two matrices.
     Then
     \begin{enumerate}[i)]
         \item $\pinv{\brac*{B^*}}\pinv{\brac*{AB}}\in \minv{(AB)B^*}{1,2}$,
         \item $B\pinv{\brac*{AB}}\in \minv{(AB)\pinv{B}}{1,2}$,
         \item $\pinv{\brac*{AB}}\pinv{\brac*{A^*}}\in \minv{A^*(AB)}{1,2}$,
         \item $\pinv{\brac*{AB}}A\in \minv{\pinv{A}(AB)}{1,2}$,
     \end{enumerate}
     Moreover, if $\pinv{B}\pinv{A}\in \minv{(AB)}{1,2}$, then
     \begin{enumerate}[i)]
         \item $\pinv{B}\pinv{\brac*{A^*A}}\in \minv{\brac*{A^*A}B}{1,2}$,
         \item $\pinv{\brac*{BB^*}}\pinv{A}\in \minv{A\brac*{BB^*}}{1,2}$,
         \item $\pinv{B}\brac*{\pinv{A}A}\in \minv{\brac*{\pinv{A}A}B}{1,2}$,
         \item $\brac*{B\pinv{B}}\pinv{A}\in \minv{A\brac*{B\pinv{B}}}{1,2}$,
     \end{enumerate}
     cf.~Corollary~\ref{cor:ROL_AB_implies_possible_ROLs}.
 \end{corollary}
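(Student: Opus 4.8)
The plan is to reduce all eight assertions to the single range criterion furnished by Theorem~\ref{thm:12_eqv}: for matrices $X,Y$ of compatible sizes one has $\pinv{Y}\pinv{X}\in\minv{XY}{1,2}$ if and only if $\rng{\pinv{X}XY}\subset\rng{Y}$ (this is the equivalence of conditions $i)$ and $iii)$ of that theorem, with $A,B$ replaced by $X,Y$). Throughout I would freely use the standard identities $\rng{\pinv{X}X}=\rng{X^*}$, $\rng{\pinv{X}}=\rng{X^*}$, $\pinv{\brac*{X^*}}=\brac*{\pinv{X}}^*$ (Lemma~\ref{lem:pinv_and_conj}), $\pinv{\brac*{\pinv{X}}}=X$, the fact that the pseudoinverse of an orthogonal projection is the projection itself, and the elementary inclusion $\rng{NM}\subset\rng{N}$. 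The whole argument is then a matter of correctly reading off $X$ and $Y$ from each factorization $XY$.

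For the four unconditional claims I would split them into a ``right'' pair and a ``left'' pair. In $i)$ and $ii)$ the inner factor is $X=AB$, so $\rng{\pinv{X}X}=\rng{\brac*{AB}^*}=\rng{B^*A^*}\subset\rng{B^*}$; since $\rng{B^*}=\rng{\pinv{B}}$ is exactly $\rng{Y}$ both for $Y=B^*$ (claim $i$, giving $\pinv{\brac*{B^*}}\pinv{\brac*{AB}}$) and for $Y=\pinv{B}$ (claim $ii$, using $\pinv{\brac*{\pinv{B}}}=B$), the criterion $\rng{\pinv{X}XY}\subset\rng{\pinv{X}X}\subset\rng{Y}$ holds at once. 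In $iii)$ and $iv)$ the inner factor is $Y=AB$ and the outer factor is $X=A^*$, respectively $X=\pinv{A}$; in both cases $\pinv{X}X=A\pinv{A}$ (for $X=A^*$ via $\pinv{\brac*{A^*}}A^*=\brac*{A\pinv{A}}^*=A\pinv{A}$, and for $X=\pinv{A}$ via $\pinv{\brac*{\pinv{A}}}\pinv{A}=A\pinv{A}$), so $\pinv{X}XY=A\pinv{A}AB=AB=Y$ by $A\pinv{A}A=A$, and the inclusion holds with equality.

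For the four conditional claims the hypothesis $\pinv{B}\pinv{A}\in\minv{\brac*{AB}}{1,2}$ is, again by Theorem~\ref{thm:12_eqv}, equivalent to $\rng{\pinv{A}AB}\subset\rng{B}$, and each claim reduces to exactly this inclusion. In $i)$ take $X=A^*A$, $Y=B$: since $\pinv{\brac*{A^*A}}\brac*{A^*A}=\pinv{A}A$ (both are the orthogonal projection onto $\rng{A^*}$), $\rng{\pinv{X}XY}=\rng{\pinv{A}AB}\subset\rng{B}$. In $iii)$ take $X=\pinv{A}A$, $Y=B$: as $\pinv{A}A$ is an orthogonal projection, $\pinv{\brac*{\pinv{A}A}}\brac*{\pinv{A}A}=\pinv{A}A$ and again $\rng{\pinv{X}XY}=\rng{\pinv{A}AB}$. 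In $ii)$ take $X=A$, $Y=BB^*$ (using $\rng{BB^*}=\rng{B}$), and in $iv)$ take $X=A$, $Y=B\pinv{B}$ (using $\rng{B\pinv{B}}=\rng{B}$); in both $\rng{\pinv{X}XY}=\rng{\pinv{A}AB\cdot C}\subset\rng{\pinv{A}AB}\subset\rng{B}$, where $C$ is $B^*$, respectively $\pinv{B}$, and I invoke $\rng{NM}\subset\rng{N}$. This settles all eight assertions.

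The computations are entirely routine once the correct factorization is identified in each line, so I do not anticipate a genuine obstacle; the only points requiring care are the bookkeeping of which pseudoinverse identity collapses the outer factor (for instance $\pinv{\brac*{\pinv{A}}}=A$ and $\pinv{\brac*{A^*A}}A^*A=\pinv{A}A$), and the observation that in the ``left'' unconditional cases the inclusion sharpens to the equality $\pinv{X}XY=Y$. Using the single criterion from Theorem~\ref{thm:12_eqv} in place of verifying the Penrose conditions~(\ref{eq:Penrose_cond1}) and~(\ref{eq:Penrose_cond2}) directly is precisely what keeps the argument short, mirroring the role played by the Greville conditions in Corollary~\ref{cor:ROL_AB_implies_possible_ROLs}.
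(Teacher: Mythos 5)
Your proof is correct. All eight memberships do follow from Theorem~\ref{thm:12_eqv} applied to the appropriate factorization $XY$, and your identification of $X$ and $Y$ in each case, together with the collapsing identities $\pinv{\brac*{\pinv{B}}}=B$, $\pinv{\brac*{A^*}}A^*=A\pinv{A}$, $\pinv{\brac*{A^*A}}\brac*{A^*A}=\pinv{A}A$ and $\pinv{\brac*{B\pinv{B}}}=B\pinv{B}$, is accurate. The paper's own proof runs through the same theorem but invokes a different clause: instead of the range criterion iii), it checks the geometric criterion v) on principal angles. For the unconditional claims it observes $\rng{\brac*{AB}^*}=\rng{B^*A^*}\subset\rng{B^*}$, so all principal angles between the relevant pair of subspaces are zero; for the conditional claims it observes that replacing one factor --- $A$ by $A^*A$, or $B$ by $BB^*$ (similarly $\pinv{A}A$, $B\pinv{B}$) --- leaves the pair of subspaces $\rng{A^*}$, $\rng{B}$ unchanged, so the angle condition supplied by the hypothesis transfers verbatim to the new product. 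The two routes are of comparable length and pass through the same key theorem; the paper's makes all eight cases a single observation about unchanged subspaces and requires no computation, while yours avoids principal angles altogether and stays in projection algebra, at the modest cost of tracking which pseudoinverse identity collapses $\pinv{X}X$ in each case, and with the small bonus that in your unconditional cases iii) and iv) the inclusion sharpens to the equality $\pinv{X}XY=Y$.
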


 \begin{proof}
     For the first part i)
     \[\rng{(AB)^*}=\rng{B^*A^*}\subset \rng{B^*},\]
     so that all principal angles between $\rng{(AB)^*}$ and $\rng{B^*}$ are zero, 
     etc.
     For the second part i), if $\pinv{B}\pinv{A}\in \minv{(AB)}{1,2}$  then
     \[\rng{\brac{A^*A}^*}=\rng{A^*A}=\rng{A^*},\]
     hence its principal angles with $\rng{B}$ are $0$ or $\frac{\pi}{2}$, etc.
 \end{proof}

Using similar techniques one can prove the following equivalent conditions, some of them known before, cf.~\cite[Theorem 95~a)]{Tian512}
\begin{theorem}\label{thm:123_124_equiv}
 Let $A\in\bK^{m\times n}$ and $B\in\bK^{n\times k}$ be two matrices. 
    Then Table~\ref{tab:similar_123_124_equivalent} gives equivalent conditions for $\pinv{B}\pinv{A}$ to be an $\{1,2,3\}$- or $\{1,2,4\}$-inverse of $AB$.
    \renewcommand{\arraystretch}{1.4}
\begin{table}[ht!]
    \centering
    \begin{tabular}{c|c}
       $\pinv{B}\pinv{A}$ is a $\{1,2,3\}$-inverse of $AB$  & $\pinv{B}\pinv{A}$ is a $\{1,2,4\}$-inverse of $AB$ \\ \hline
       $\rng{A^*AB}=\rng{A^*}\cap\rng{B}$  &  $\rng{BB^*A^*}=\rng{A^*}\cap\rng{B}$\\
       $AB\pinv{B}\pinv{A}$ is an orthogonal projection &  $\pinv{B}\pinv{A}AB$ is an orthogonal projection \\
       $AB\pinv{B}\pinv{A}=AB\pinv{\brac*{AB}}$  &  $\pinv{B}\pinv{A}AB=\pinv{\brac*{AB}}AB$ \\
      \makecell{there exist SVD decompositions of $A,B$  \\ and an orthogonal matrix $Q\in\bK^{r_B\times r_B}$ such that \\$V_A^*U_B=\left[\begin{array}{cc}
            0 & *\\
            Q & 0\\
            0 & *\\
        \end{array}\right]$ or $AB=0$}  & \makecell{there exist SVD decompositions of $A,B$  \\ and an orthogonal matrix $Q\in\bK^{r_A\times r_A}$ such that \\$V_A^*U_B=\left[\begin{array}{ccc}
            0 & Q & 0\\
            * & 0 & *\\
        \end{array}\right]$ or $AB=0$} \\
       $\{1,2,3\}$-ROL holds for ${A^*A},BB^*$ &  $\{1,2,4\}$-ROL holds for ${A^*A},BB^*$ \\
      \makecell{the principal angles between $\rng{A^*}$ and $\rng{B}$ \\ belong to the set $\bracc*{0,\frac{\pi}{2}}$ and  $\rng{A^*}\cap\rng{B}$ \\  is spanned by left singular vectors of $A$}  &  \makecell{the principal angles between $\rng{A^*}$ and $\rng{B}$ \\ belong to the set $\bracc*{0,\frac{\pi}{2}}$ and  $\rng{A^*}\cap\rng{B}$ \\  is spanned by right singular vectors of $B$}\\

    \end{tabular}
    \caption{Similarity between equivalent conditions for $\{1,2,3\}$ and $\{1,2,4\}$ ROLs}
    \label{tab:similar_123_124_equivalent}
\end{table}
\end{theorem}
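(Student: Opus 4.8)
The plan is to establish the $\{1,2,3\}$ column and deduce the $\{1,2,4\}$ column by the conjugate--transpose symmetry of Lemma~\ref{lem:pinv_and_conj}. For any $M$ one has $X\in\minv{M}{1,2,3}$ if and only if $X^*\in\minv{M^*}{1,2,4}$, because transposing converts conditions~(\ref{eq:Penrose_cond1}),(\ref{eq:Penrose_cond2}) into themselves and converts $(MX)^*=MX$ into the $\{4\}$-condition for $(M^*,X^*)$. With $M=AB$ and $X=\pinv{B}\pinv{A}$ this reads $\pinv{(A^*)}\pinv{(B^*)}\in\minv{(B^*A^*)}{1,2,4}$, i.e. the $\{1,2,4\}$ statement for the pair $(B^*,A^*)$. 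Under the substitution $A\mapsto B^*,\ B\mapsto A^*$ the subspaces $\rng{A^*}$ and $\rng{B}$ exchange roles, so the principal angles are unchanged, and the operator $A^*A$ is replaced by $BB^*$; this carries each left-column entry of Table~\ref{tab:similar_123_124_equivalent} to the corresponding right-column entry. Hence it suffices to treat the left column.

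For the left column I would start from the fact that $\pinv{B}\pinv{A}\in\minv{(AB)}{1,2,3}$ means $\pinv{B}\pinv{A}\in\minv{(AB)}{1,2}$ together with $AB\pinv{B}\pinv{A}$ being Hermitian. Condition~(\ref{eq:Penrose_cond1}) alone already forces $AB\pinv{B}\pinv{A}$ to be idempotent, since $AB\pinv{B}\pinv{A}AB=AB$ gives $\brac*{AB\pinv{B}\pinv{A}}^2=AB\pinv{B}\pinv{A}$, so the extra requirement is precisely that $AB\pinv{B}\pinv{A}$ be an \emph{orthogonal} projection. This ties the target condition to entry~$2$; entry~$3$ then follows because $AB\pinv{\brac*{AB}}$ is always the orthogonal projection onto $\rng{AB}$, so $AB\pinv{B}\pinv{A}=AB\pinv{\brac*{AB}}$ says exactly that $AB\pinv{B}\pinv{A}$ is the orthogonal projection onto $\rng{AB}$.

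The computational engine for the geometric entries is an SVD reduction. Writing $A=U_A\Sigma_A V_A^*$, $B=U_B\Sigma_B V_B^*$ and $\widetilde{\Sigma}=\diag(\sigma_{A,1},\ldots,\sigma_{A,r_A})$, a direct multiplication collapses to
\[AB\pinv{B}\pinv{A}=U_A\begin{bmatrix}\widetilde{\Sigma}\,M_{11}M_{11}^*\,\widetilde{\Sigma}^{-1} & 0\\ 0 & 0\end{bmatrix}U_A^*,\qquad M_{11}=\brac*{V_A}_{:,1:r_A}^*\brac*{U_B}_{:,1:r_B},\]
where $M_{11}$ is the cosine matrix whose singular values are the cosines of the principal angles between $\rng{A^*}$ and $\rng{B}$. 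As orthogonal-projection-ness is preserved under the unitary conjugation by $U_A$, the matrix $AB\pinv{B}\pinv{A}$ is an orthogonal projection iff $\widetilde{\Sigma}M_{11}M_{11}^*\widetilde{\Sigma}^{-1}$ is; being similar to the Hermitian $M_{11}M_{11}^*$ it is idempotent iff the eigenvalues $\cos^2\theta_i$ lie in $\{0,1\}$ (recovering the principal-angle condition of Theorem~\ref{thm:12_eqv}, via Lemma~\ref{lem:sing_values_1_of_proj}), and it is Hermitian iff $\widetilde{\Sigma}^2$ commutes with $M_{11}M_{11}^*$, i.e. iff $\rng{A^*}\cap\rng{B}$ is $A^*A$-invariant, i.e. spanned by singular vectors of $A$ (entry~$6$). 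For entry~$1$ I would observe $\rng{A^*AB}=(A^*A)\brac*{\rng{B}}$, whose dimension equals the number of principal angles $<\tfrac{\pi}{2}$, while $\dim\brac*{\rng{A^*}\cap\rng{B}}$ counts the angles equal to $0$; forcing these equal yields the $\{1,2\}$ condition, after which $\rng{A^*AB}=\rng{A^*}\cap\rng{B}$ is exactly the invariance statement. Entry~$4$ comes from choosing, using the SVD freedom of Section~\ref{sec:params_of_SVDs}, bases in which the invariant common directions are coordinate directions, so that $V_A^*U_B$ acquires the stated block pattern with $Q$ unitary; and entry~$5$ follows by applying the geometric characterisation to the Hermitian pair $(A^*A,BB^*)$, whose ranges and eigenvectors coincide with those of $A$ and $B$ (cf. Corollary~\ref{cor:other_12ROLs}), together with Lemma~\ref{lem:prod_of_proj}.

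I expect the main obstacle to be the bookkeeping around entry~$4$: one must simultaneously align the common subspace $\rng{A^*}\cap\rng{B}$ with the leading coordinate block while keeping $\Sigma_A,\Sigma_B$ fixed, and then read off both the precise zero pattern of $V_A^*U_B$ and the unitarity of $Q$, checking that the degenerate alternative $AB=0$ (trivial common subspace) is carried along correctly at each step. The remaining implications are then routine given the SVD formula above and Theorem~\ref{thm:12_eqv}.
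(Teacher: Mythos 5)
Your proposal is correct and proves at least as much as the paper does, but by a genuinely different route. The paper never leaves projection algebra: it gets the forward implication to ``$AB\pinv{B}\pinv{A}$ is an orthogonal projection'' from Hermitianity plus the commutativity of $\pinv{A}A$ and $B\pinv{B}$ (Theorem~\ref{thm:12_eqv}, viii)); for the converse it multiplies the idempotency relation by $\pinv{A}$ on the left and $A$ on the right, so that $\pinv{A}AB\pinv{B}\pinv{A}A$ becomes an orthogonal projection, and then quotes Lemma~\ref{lem:sing_values_1_of_proj} and Theorem~\ref{thm:12_eqv}, xii); the identity $AB\pinv{B}\pinv{A}=AB\pinv{\brac*{AB}}$ is settled by a rank argument, while the SVD block pattern and the entire $\{1,2,4\}$ column are omitted as ``similar''. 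You instead (a) make the $\{1,2,4\}$ reduction explicit through $X\in\minv{M}{1,2,3}\Leftrightarrow X^*\in\minv{M^*}{1,2,4}$, which is cleaner than the paper's silent symmetry; (b) obtain the forward direction more cheaply, observing that condition~(\ref{eq:Penrose_cond1}) alone makes $AB\pinv{B}\pinv{A}$ idempotent; and (c) replace the projection manipulations by the block identity $AB\pinv{B}\pinv{A}=U_A\diag\brac*{\widetilde{\Sigma}M_{11}M_{11}^*\widetilde{\Sigma}^{-1},0}U_A^*$ with $M_{11}=\brac*{V_A}_{:,1:r_A}^*\brac*{U_B}_{:,1:r_B}$, which separates idempotency (eigenvalues of $M_{11}M_{11}^*$ in $\bracc*{0,1}$, i.e.\ the principal-angle condition, i.e.\ $\{1,2\}$ by Theorem~\ref{thm:12_eqv}, v)) from Hermitianity (commutation with $\widetilde{\Sigma}^2$). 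This buys a uniform treatment of the geometric entries and a dimension-count proof of $\rng{A^*AB}=\rng{A^*}\cap\rng{B}$, neither of which appears in the paper.

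Two points in your sketch require care. First, the clause ``Hermitian iff $\widetilde{\Sigma}^2$ commutes with $M_{11}M_{11}^*$ iff $\rng{A^*}\cap\rng{B}$ is $A^*A$-invariant'' is not an unconditional chain of equivalences: commutation forces \emph{every} eigenspace of $M_{11}M_{11}^*$ to be $\widetilde{\Sigma}^2$-invariant, whereas invariance of the intersection only controls the eigenvalue-one eigenspace, so the last ``iff'' fails in general (take an eigenvalue $\tfrac{1}{2}$ with a non-invariant eigenspace and trivial $1$-eigenspace). It does hold in the presence of idempotency, since then the only eigenvalues are $0$ and $1$ and the $0$-eigenspace is the orthogonal complement of the $1$-eigenspace, hence invariant as well; the table pairs the conditions in exactly this conjunction, so your argument is repairable, but it must be stated as an equivalence of conjunctions rather than of the individual clauses. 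Second, passing from ``$AB\pinv{B}\pinv{A}$ is an orthogonal projection'' to $AB\pinv{B}\pinv{A}=AB\pinv{\brac*{AB}}$ still needs the paper's small range step: one has $\rng{AB\pinv{B}\pinv{A}}\subset\rng{AB}$ always, and equality of ranges once condition~(\ref{eq:Penrose_cond1}) is in hand, which is what identifies the projection as the one onto $\rng{AB}$. Like the paper, you leave the SVD block pattern as a sketch; your flagging of that bookkeeping as the main remaining work is accurate.
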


\begin{proof}
Only for $\{1,2,3\}$-ROL. We omit most of the details, as they are similar to $\{1,2\}$ and $\{1,2,3,4\}$ cases.

If $\pinv{B}\pinv{A}\in\minv{(AB)}{1,2,3}$ then $AB\pinv{B}\pinv{A}$ is a Hermitian matrix and, by Theorem~\ref{thm:12_eqv} point $viii)$ $\pinv{A}A$ and $B\pinv{B}$ commute, therefore
\[\brac*{AB\pinv{B}\pinv{A}}\brac*{AB\pinv{B}\pinv{A}}=A\brac*{B\pinv{B}}\brac*{\pinv{A}A}\brac*{B\pinv{B}}\pinv{A}=
A\brac*{\pinv{A}A}\brac*{B\pinv{B}}\brac*{B\pinv{B}}\pinv{A}=
AB\pinv{B}\pinv{A}.\]
Therefore $AB\pinv{B}\pinv{A}$ is an orthogonal projection.

If $AB\pinv{B}\pinv{A}$ is an orthogonal projection (which implies that
$\pinv{B}\pinv{A}\in\minv{(AB)}{3}$) then
\[\brac*{AB\pinv{B}\pinv{A}}\brac*{AB\pinv{B}\pinv{A}}=AB\pinv{B}\pinv{A}.\]
Multiplying the above equation on the left by $\pinv{A}$ and by $A$ on the right we have
\[\brac*{\pinv{A}AB\pinv{B}\pinv{A}A}\brac*{\pinv{A}AB\pinv{B}\pinv{A}A}=\pinv{A}AB\pinv{B}\pinv{A}A,\]
therefore $\pinv{A}AB\pinv{B}\pinv{A}A$ is an orthogonal projection. By Lemma~\ref{lem:sing_values_1_of_proj} its singular values/eigenvalues are $0$ and $1$. By Theorem~\ref{thm:12_eqv} case $xii)$ $\pinv{B}\pinv{A}\in\minv{(AB)}{1,2}$ as well.

If $AB\pinv{B}\pinv{A}=AB\pinv{\brac*{AB}}$ then $AB\pinv{B}\pinv{A}$ is an orthogonal projection.
If $\pinv{B}\pinv{A}\in\minv{(AB)}{1,2,3}$ then $AB\pinv{B}\pinv{A}$ is an orthogonal projection
and 
\[\rng{AB\pinv{B}\pinv{A}}\subset \rng{AB}.\]
On the other hand, $AB\pinv{B}\pinv{A}AB=AB$ therefore $\rank AB\pinv{B}\pinv{A}\ge\rank AB$, so 
$\rng{AB\pinv{B}\pinv{A}}=\rng{AB}$. It follows that $AB\pinv{B}\pinv{A}=AB\pinv{\brac*{AB}}$ as it is an orthogonal projection onto $\rng{AB}$.

The principal angles between 
\[\rng{A^*}\cap\rng{B}=\rng{A^*A}\cap\rng{BB^*},\]
are the same, and $A$ with $A^*A$ share the same left singular vectors.
\end{proof}

\section{MATLAB Code}\label{sec:matlab}
The following code in MATLAB (R2022a) completes a random $m$-by-$n$ matrix $A$ of given rank $r_A$ with a $n$-by-$k$ matrix $B$ of rank $r_B\le \min(n,k)$ such that $\pinv{\brac*{AB}}=\pinv{B}\pinv{A}$. The notation is consistent with the one used in Section~\ref{sec:sufficient}.

\begin{lstlisting}
% see for yourself 

m = 40; n = 21; k = 30;
rA = 6;  % rank of A, not greater than m and n
rB = 8;  % rank of B, not greater than n and k
N = 3; % rank AB = dim(C(A*)\cap C(B)),  not greater than rA and rB

% fix matrix A
SA = zeros(m,n);
SA(1:m+1:rA*m+1) = randi(rA,[1 rA]); % assign arbitrary singular values
[UA,~] = qr(rand(m)); [VA,~] = qr(rand(n)); % random orthogonal matirices
A = UA*SA*VA'; % matrix A of rank rA

% construct matrix B
[Q,~] = qr(rand(N));
UB = [VA(:,1:N)*Q VA(:,(rA+1):((rA+1)+rB-N)) VA(:,(N+1):rA) VA(:,((rA+1)+rB-N+1):end)];
[VB,~] = qr(rand(k));
SB = zeros(n,k);
SB(1:n+1:rB*n+1) = randi(rB,[1 rB]); % assign arbitrary singular values
B = UB*SB*VB'; 

norm(pinv(A*B) - pinv(B)*pinv(A))
rank(B) - rB
size(B) - [n k]
rank(A*B) - N

% tests

AA=A'*A; BB=B*B';
pAA = pinv(A)*A; ApA = A*pinv(A);
pBB = pinv(B)*B; BpB = B*pinv(B);  

% see Lemma §\ref{lem:ROL_AB_implies_ROL_AA_BB}§
norm(pinv(AA*BB)-pinv(BB)*pinv(AA))

% see Corollary §\ref{cor:ROL_AB_implies_possible_ROLs}§
norm(pinv(A'*A*B)-pinv(A*B)*pinv(A'))+...
norm(pinv(pinv(A)*A*B)-pinv(A*B)*A)+... % pinv(pinv(A)) = A
norm(pinv(A*B*B')-pinv(B')*pinv(A*B))+...
norm(pinv(A*B*pinv(B))-B*pinv(A*B))+... % pinv(pinv(B)) = B
norm(pinv(AA*B)-pinv(B)*pinv(AA))+... %!!! 
norm(pinv(A*BB)-pinv(BB)*pinv(A))+... 
norm(pinv(pAA*B)-pinv(B)*pAA)+... % pinv(pAA) = pAA 
norm(pinv(A*BpB)-BpB*pinv(A))+... % pinv(BpB) = BpB
norm(pinv(B'*pinv(A))-A*pinv(B'))+...
norm(pinv(pinv(B)*A')-pinv(A')*B)

% see Corollary §\ref{cor:image_of_AB_under_1234}§
P1 = A*B*pinv(A*B);
P2 = (A*A'*A*B)*pinv(A*A'*A*B);
P3 = (A*B*B'*A')*pinv(A*B*B'*A');
CBcapCAs = null([null(B')'; null(A)']);
P4 = CBcapCAs*pinv(CBcapCAs);
norm(P1-P2) + norm(P1-P3)

% see Theorem §\ref{thm:pAA_BB_and_AA_BpB_commute}§
norm(pAA*BB-BB*pAA) + norm(AA*BpB-BpB*AA)


% see Remark §\ref{rem:range_of_Grevilles_spaces}§
% by prinicipal angles
CBcapCAs = null([null(B')'; null(A)']);
QAAB = orth(A'*A*B);
QBBA = orth(B*B'*A');
Qcap = orth(CBcapCAs);
svd(QAAB'*QBBA)
svd(Qcap'*QBBA)
\end{lstlisting}

The following returns two matrices $A,B$ such that $\pinv{B}\pinv{A}\in \minv{(AB)}{1,2}$ using the property that is equivalent to the principal angles between $\rng{A^*}$ and $\rng{B}$ being equal to $0$ or $\frac{\pi}{2}$.

\begin{lstlisting}
% see for yourself 

m = 40; n = 21; k = 30;
rA = 6;  % rank of A, not greater than m and n
rB = 8;  % rank of B, not greater than n and k
N = 3; % rank AB = dim(C(A*) \cap C(B)),  not greater than rA and rB

CBcapCAs = orth(rand(n,N)); VA = zeros(n,n); UB = zeros(n,n);
VA(:,1:N) = CBcapCAs; UB(:,1:N) = CBcapCAs;
VA(:,(N+1):n) = orth(null(CBcapCAs'));
NAs = orth(null(VA(:,1:rA)'));
UB(:,(N+1):rB) = NAs(:,1:rB-N);
UB(:,(rB+1):n) = orth(null(UB(:,1:rB)'));
svd((VA(:,1:rA))'*UB(:,1:rB)) % check the angles
VA(:,1:rA) = VA(:,1:rA) * orth(rand(rA)); % change bases of C(A*) and C(B)
UB(:,1:rB) = UB(:,1:rB) * orth(rand(rB));
SA = zeros(m,n); SB = zeros(n,k);
SA(1:m+1:rA*m+1) = randi(rA,[1 rA]); % assign arbitrary singular values
SB(1:n+1:rB*n+1) = randi(rB,[1 rB]); % assign arbitrary singular values
[UA,~] = qr(rand(m)); [VB,~] = qr(rand(k)); % random orthogonal matirices
A = UA*SA*VA'; % matrix A of rank rA
B = UB*SB*VB'; % matrix B of rank rB

pBpA = pinv(B)*pinv(A);
AB = A*B;
% pinv(B)pinv(A) is 1,2-inverse of AB
norm(AB*pBpA*AB - AB) + norm(pBpA*AB*pBpA - pBpA)
% but it is not in general the 1,2,3,4-inverse of AB
norm(pinv(AB)-pBpA)

% auxiliary function

test = @(A,B) norm(pinv(B)*pinv(A)*A*B*pinv(B)*pinv(A)-pinv(B)*pinv(A))+...
norm(A*B*pinv(B)*pinv(A)*A*B-A*B)

% tests

% see Corollary §\ref{cor:12ROL_AB_implies_possible_12ROLs}§
test(A*B,B') + test(A*B,pinv(B)) + test(A',A*B) + test(pinv(A),A*B) +...
test(A'*A,B) + test(A,B*B') + test(pinv(A)*A,B) + test(A,B*pinv(B)) + ...
test(pinv(B),A') + test(B',pinv(A))

% see Corollary §\ref{cor:other_12ROLs}§
test(A'*A,B*B')
test(pinv(A)*A,B*pinv(B))

% see Theorem §\ref{thm:12_eqv}§
P = pinv(A)*A*B*pinv(B); Q = B*pinv(B)*pinv(A)*A;
norm(P^2-P) + norm(Q^2-Q) + norm(P'-P) + norm(Q'-Q)
eig(pinv(A)*A*B*pinv(B)*pinv(A)*A)
\end{lstlisting}

It is interesting to observe that the difference between $\pinv{(AB)}$ and $\pinv{B}\pinv{A}$ is usually not large.

The following code illustrates Theorem~\ref{thm:123_124_equiv} in the $\{1,2,3\}$ case.

\begin{lstlisting}
% see for yourself 

m = 40; n = 21; k = 30;
rA = 6;  % rank of A, not greater than m and n
rB = 8;  % rank of B, not greater than n and k
N = 3; % rank AB = dim(C(A*) \cap C(B)),  not greater than rA and rB

VA = orth(rand(n)); 
CB = VA(:,(rA-N+1):(rA-N+rB))*orth(rand(rB));
NBs = orth(null(CB'));
UB = [CB NBs];
UA = orth(rand(m)); VB = orth(rand(k));
SA = zeros(m,n); SB = zeros(n,k);
SA(1:m+1:rA*m+1) = randi(rA,[1 rA]); % assign arbitrary singular values
SB(1:n+1:rB*n+1) = randi(rB,[1 rB]); % assign arbitrary singular values
A = UA*SA*VA'; % matrix A of rank rA
B = UB*SB*VB'; % matrix B of rank rB

% auxiliary function

test = @(A,B) norm(pinv(B)*pinv(A)*A*B*pinv(B)*pinv(A)-pinv(B)*pinv(A))+...
norm(A*B*pinv(B)*pinv(A)*A*B-A*B) + ...
norm(A*B*pinv(B)*pinv(A) - (A*B*pinv(B)*pinv(A))')

test(A,B)
test(A'*A,B*B')

% 124 not satisfied
norm(pinv(B)*pinv(A)*A*B - (pinv(B)*pinv(A)*A*B)')

% other conditions
norm(A*B*pinv(B)*pinv(A)*A*B*pinv(B)*pinv(A)-A*B*pinv(B)*pinv(A))
norm(A*B*pinv(B)*pinv(A)-(A*B*pinv(B)*pinv(A))')

norm(A*B*pinv(B)*pinv(A)-A*B*pinv(A*B))

\end{lstlisting}

\bibliographystyle{unsrt}  
\bibliography{references}  

\end{document}